 \noindent \texttt{wushijig@gmail.com} \par
\noindent \texttt{j.koskivirta@imperial.ac.uk}
\numberwithin{equation}{subsection}
\newtheorem{theorem}{Theorem}[subsection]
\newtheorem{lemma}[theorem]{Lemma}
\newtheorem{corollary}[theorem]{Corollary}
\newtheorem{definition}[theorem]{Definition}
\newtheorem{proposition}[theorem]{Proposition}
\newtheorem{condition}[theorem]{Condition}
\newtheorem{Assumption}[theorem]{Assumption}
\theoremstyle{remark}
\newtheorem{rmk}[theorem]{Remark}
\title{Strata Hasse invariants, Hecke algebras and Galois representations}
\newcommand{\GZip}{\mathop{\text{$G$-{\tt Zip}}}\nolimits}
\newcommand{\GoneZip}{\mathop{\text{$G_1$-{\tt Zip}}}\nolimits}
\newcommand{\GtwoZip}{\mathop{\text{$G_2$-{\tt Zip}}}\nolimits}
\newcommand{\GspZip}{\mathop{\text{$GSp(2g)$-{\tt Zip}}}\nolimits}
\newcommand{\GF}{\mathop{\text{$G$-{\tt ZipFlag}}}\nolimits}
\newskip\procskipamount
\newskip\interskipamount
\newskip\refskipamount
\newcommand{\procskip}{\vskip\procskipamount}
\newcommand{\interskip}{\vskip\interskipamount}
\newcommand{\refskip}{\vskip\refskipamount}
\newcommand{\procbreak}{\par
   \ifdim\lastskip<\procskipamount\removelastskip
   \penalty-100
   \procskip\fi
   \noindent\ignorespaces}
\newcommand{\titlebreak}{\par%
\ifdim\lastskip<\interskipamount\removelastskip%
\penalty10000%
\interskip\fi%
\noindent}%
\newcommand{\interbreak}{\par%
\ifdim\lastskip<\interskipamount\removelastskip%
\penalty-100%
\interskip\fi%
\noindent\ignorespaces}%
\newcommand{\refbreak}{\par%
\ifdim\lastskip<\refskipamount\removelastskip%
\penalty-100%
\refskip\fi%
\noindent\ignorespaces}%
\newcounter{listcounter}
\newcounter{deflistcounter}
\newcounter{equivcounter}
\newskip{\itemsepamount}
\newskip{\topsepamount}
\newenvironment{assertionlist}{%
  \begin{list}
    {\upshape (\arabic{listcounter})}
    {\setlength{\leftmargin}{18pt}
     \setlength{\rightmargin}{0pt}
     \setlength{\itemindent}{0pt}
     \setlength{\labelsep}{5pt}
     \setlength{\labelwidth}{13pt}
     \setlength{\listparindent}{\parindent}
     \setlength{\parsep}{0pt}
     \setlength{\itemsep}{\itemsepamount}
     \setlength{\topsep}{\topsepamount}
     \usecounter{listcounter}}}
  {\end{list}}
\newenvironment{definitionlist}{%
  \begin{list}
    {\upshape (\alph{deflistcounter})}
    {\setlength{\leftmargin}{18pt}
     \setlength{\rightmargin}{0pt}
     \setlength{\itemindent}{0pt}
     \setlength{\labelsep}{5pt}
     \setlength{\labelwidth}{13pt}
     \setlength{\listparindent}{\parindent}
     \setlength{\parsep}{0pt}
     \setlength{\itemsep}{\itemsepamount}
     \setlength{\topsep}{\topsepamount}
     \usecounter{deflistcounter}}}
  {\end{list}}
\newenvironment{equivlist}{%
  \begin{list}
    {\upshape (\roman{equivcounter})}
    {\setlength{\leftmargin}{18pt}
     \setlength{\rightmargin}{0pt}
     \setlength{\itemindent}{0pt}
     \setlength{\labelsep}{5pt}
     \setlength{\labelwidth}{13pt}
     \setlength{\listparindent}{\parindent}
     \setlength{\parsep}{0pt}
     \setlength{\itemsep}{\itemsepamount}
     \setlength{\topsep}{\topsepamount}
     \usecounter{equivcounter}}}
  {\end{list}}
\newcommand{\Acal}{{\mathcal A}}
\newcommand{\Bcal}{{\mathcal B}}
\newcommand{\Ccal}{{\mathcal C}}
\newcommand{\Dcal}{{\mathcal D}}
\newcommand{\Fcal}{{\mathcal F}}
\newcommand{\Gcal}{{\mathcal G}}
\newcommand{\Hcal}{{\mathcal H}}
\newcommand{\Ical}{{\mathcal I}}
\newcommand{\Kcal}{{\mathcal K}}
\newcommand{\Lcal}{{\mathcal L}}
\newcommand{\Mcal}{{\mathcal M}}
\newcommand{\Ncal}{{\mathcal N}}
\newcommand{\Ocal}{{\mathcal O}}
\newcommand{\Pcal}{{\mathcal P}}
\newcommand{\Qcal}{{\mathcal Q}}
\newcommand{\Rcal}{{\mathcal R}}
\newcommand{\Xcal}{{\mathcal X}}
\newcommand{\Ycal}{{\mathcal Y}}
\newcommand{\Zcal}{{\mathcal Z}}
\newcommand{\gfr}{{\mathfrak g}}
\newcommand{\lfr}{{\mathfrak l}}
\newcommand{\mfr}{{\mathfrak m}}
\newcommand{\nfr}{{\mathfrak n}}
\newcommand{\pfr}{{\mathfrak p}}
\newcommand{\qfr}{{\mathfrak q}}
\newcommand{\tfr}{{\mathfrak t}}
\newcommand{\zfr}{{\mathfrak z}}
\renewcommand{\AA}{\mathbf{A}}
\newcommand{\BB}{\mathbf{B}}
\newcommand{\CC}{\mathbf{C}}
\newcommand{\FF}{\mathbf{F}}
\newcommand{\GG}{\mathbf{G}}
\newcommand{\LL}{\mathbf{L}}
\newcommand{\NN}{\mathbf{N}}
\newcommand{\PP}{\mathbf{P}}
\newcommand{\QQ}{\mathbf{Q}}
\newcommand{\RR}{\mathbf{R}}
\renewcommand{\SS}{\mathbf{S}}
\newcommand{\TT}{\mathbf{T}}
\newcommand{\XX}{\mathbf{X}}
\newcommand{\ZZ}{\mathbf{Z}}
\newcommand{\mmu}{\bm{\mu}}
\DeclareMathOperator{\Pic}{Pic}
\DeclareMathOperator{\nonvanish}{nonvanish}
\DeclareMathOperator{\mult}{mult}
\DeclareMathOperator{\Fil}{Fil}
\DeclareMathOperator{\Gr}{Gr}
\newcommand{\Fscr}{{\mathscr F}}
\newcommand{\Gscr}{{\mathscr G}}
\newcommand{\Lscr}{{\mathscr L}}
\newcommand{\Sscr}{{\mathscr S}}
\newcommand{\Vscr}{{\mathscr V}}
\newcommand{\Zscr}{{\mathscr Z}}
\DeclareMathOperator{\tr}{tr}
\newcommand{\cent}{{\rm Cent}}
\newcommand{\frobv}{{\rm Frob}_v}
\newcommand{\pn}{\mathbf P^n}
\newcommand{\qbar}{\overline{\mathbf Q}}
\newcommand{\fp}{\mathbf F_p}
\newcommand{\galq}{{\rm Gal}(\qbar / \QQ)}
\newcommand{\galqv}{{\rm Gal}(\qvbar / \qv)}
\newcommand{\glnqpbar}{GL(n, \qpbar)}
\newcommand{\glmc}{GL(m, \CC)}
\newcommand{\glmqpbar}{GL(m, \qpbar)}
\newcommand{\leftexp}[2]{{\vphantom{#2}}^{#1}{#2}}
\newcommand{\lbfg}{\leftexp{L}{\GG}}
\newcommand{\lbfgv}{\leftexp{L}{\GG}_v}
\newcommand{\lbfgvz}{\leftexp{L}{\GG}^{\circ}_v}
\newcommand{\lbfgvzc}{\leftexp{L}{\GG}^{\circ}_v(\CC)}
\renewcommand{\lg}{\leftexp{L}{G}}
\newcommand{\lbfgc}{\lbfg (\CC)}
\newcommand{\lbfgz}{\leftexp{L}{\GG}^{\circ}}
\newcommand{\rfdsslbfgv}{R_{\fd}^{\sesi}(\lbfgv)}
\newcommand{\rfdlbfgv}{R_{\fd}(\lbfgv)}
\newcommand{\zplbfgv}{\ZZ_p[\lbfgv]}
\newcommand{\chargp}{X^*}
\newcommand{\chargpldom}{X^*_{+,\LL}(\TT)}
\newcommand{\chargpbft}{\chargp(\mathbf T)}
\newcommand{\frob}{{\rm Frob}}
\newcommand{\zgeqo}{\ZZ_{\geq 1}}
\newcommand{\zgeqz}{\ZZ_{\geq 0}}
\newcommand{\rgeqz}{\RR_{\geq 0}}
\DeclareMathOperator{\ad}{ad}
\DeclareMathOperator{\Ad}{Ad}
\DeclareMathOperator{\card}{Card}
\DeclareMathOperator{\class}{Class}
\DeclareMathOperator{\cd}{cd}
\newcommand{\dR}{{\rm dR}}
\DeclareMathOperator{\type}{type}
\DeclareMathOperator{\Span}{Span}
\DeclareMathOperator{\Ker}{Ker}
\DeclareMathOperator{\ima}{Im}
\DeclareMathOperator{\fd}{fd}
\DeclareMathOperator{\Gal}{Gal}
\DeclareMathOperator{\Hom}{Hom}
\DeclareMathOperator{\Lie}{Lie}
\newcommand{\logdr}{\mathop{\textnormal{log-dR} }\nolimits}
\newcommand{\logcrys}{\mathop{\textnormal{log-crys} }\nolimits}
\DeclareMathOperator{\imag}{Im}
\DeclareMathOperator{\cusp}{cusp}
\DeclareMathOperator{\orb}{Orb}
\DeclareMathOperator{\pseudo}{pseudo}
\DeclareMathOperator{\Ram}{Ram}
\DeclareMathOperator{\rk}{rk}
\DeclareMathOperator{\rec}{rec}
\DeclareMathOperator{\res}{Res}
\DeclareMathOperator{\Sh}{Sh}
\DeclareMathOperator{\spec}{Spec}
\DeclareMathOperator{\stab}{Stab}
\DeclareMathOperator{\std}{Std}
\DeclareMathOperator{\Sch}{Sbt}
\DeclareMathOperator{\Sym}{Sym}
\DeclareMathOperator{\transl}{Transl}
\DeclareMathOperator{\Transp}{Transp}
\newcommand{\zp}{\mathbf Z_p}
\newcommand{\qp}{\QQ_p}
\newcommand{\qv}{\QQ_v}
\newcommand{\gal}{{\rm Gal}}
\newcommand{\galf}{\gal(\overline{F}/F)}
\newcommand{\qpbar}{\overline{\QQ}_{p}}
\newcommand{\qvbar}{\overline{\QQ}_{v}}
\newcommand{\shgx}{\Sh(\mathbf G, \mathbf X)}
\newcommand{\egx}{E(\mathbf G, \mathbf X)}
\newcommand{\gx}{(\mathbf G, \mathbf X)}
\newcommand{\rescr}{\res_{\mathbf C/\mathbf R}}
\newcommand{\ccross}{\mathbf C^{\times}}
\newcommand{\gofr}{\GG(\RR)}
\newcommand{\gofc}{\GG(\CC)}
\newcommand{\rpipi}{R_{p,\iota}(\pi)}
\newcommand{\rpiineta}{R_{p,\iota}(i, n, \eta)}
\newcommand{\rpiinetar}{R_{p,\iota}(r;i, n, \eta)}
\newcommand{\gofaf}{\mathbf G(\mathbf A_f)}
\newcommand{\gofqp}{\mathbf G(\qp)}
\newcommand{\gofqv}{\mathbf G(\qv)}
\newcommand{\gofafp}{\mathbf G(\mathbf A_f^p)}
\newcommand{\End}{{\rm End}}
\newcommand{\id}{{\rm Id}}
\newcommand{\red}{{\rm red}}
\newcommand{\sesi}{{\rm ss}}
\newcommand{\Th}{{\rm Th.}}
\newcommand{\Ths}{{\rm Ths.}}
\newcommand{\Rmk}{{\rm Rmk.}}
\newcommand{\Rmks}{{\rm Rmks.}}
\newcommand{\Cor}{{\rm Cor.}}
\newcommand{\Lem}{{\rm Lem.}}
\newcommand{\Conj}{{\rm Conj.}}
\newcommand{\Chap}{{\rm Chap.}}
\newcommand{\Def}{{\rm Def.}}
\newcommand{\Prop}{{\rm Prop.}}
\newcommand{\Props}{{\rm Props.}}
\newcommand{\Ex}{{\rm Ex.}}
\newcommand{\App}{{\rm App.}}
\newcommand{\loccit}{{\em loc.\ cit. }}
\newcommand{\loccitn}{{\em loc.\ cit.}}
\newcommand{\cf}{{\em cf. }}
\newcommand{\ie}{i.e.,\ }
\newcommand{\eg}{e.g.,\ }
\newcommand{\viz}{{\em viz. }}
\newcommand{\diag}{{\rm diag}}
\newcommand{\fil}{{\rm Fil}}
\newcommand{\sub}{{\rm sub}}
\newcommand{\can}{{\rm can}}
\newcommand{\Shk}{\Sscr_{\Kcal}}
\newcommand{\Shko}{S_{\Kcal}}
\newcommand{\Shkn}{\Sscr_{\Kcal}^n}
\newcommand{\Shktor}{\Sscr_{\Kcal}^{\Sigma} }
\newcommand{\Shktorc}{\Sscr_{\Kcal, \CC}^{\Sigma} }
\newcommand{\Flktorn}{ \Fcal l_{\Kcal}^{\Sigma,n} }
\newcommand{\Shktorn}{\Sscr_{\Kcal}^{\Sigma,n} }
\newcommand{\Shktoro}{S_{\Kcal}^{\Sigma} }
\newcommand{\Shkmin}{\Sscr_{\Kcal}^{\min} }
\newcommand{\Shkmino}{S_{\Kcal}^{\min} }
\newcommand{\Flk}{ \Fcal l_{\Kcal}}
\newcommand{\Flktoro}{ Fl_{\Kcal}^{\Sigma} }
\newcommand{\Flktor}{ \Fcal l_{\Kcal}^{\Sigma} }
\newcommand{\veta}{\Vscr({\eta})}
\newcommand{\vlambda}{\Vscr({\lambda})}
\newcommand{\leta}{\Lscr({\eta})}
\newcommand{\vcan}{\Vscr^{\can}}
\newcommand{\lcan}{\Lscr^{\can}}
\newcommand{\vcaneta}{\vcan(\eta)}
\newcommand{\lcaneta}{\lcan(\eta)}
\newcommand{\vsub}{\Vscr^{\sub}}
\newcommand{\lsub}{\Lscr^{\sub}}
\newcommand{\vsubeta}{\vsub(\eta)}
\newcommand{\lsubeta}{\Lscr^{\sub}({\eta})}
\newcommand{\Shgk}{\Sscr_{g,\tilde \Kcal}}
\newcommand{\Shgkmin}{\Sscr_{g,\tilde \Kcal}^{\min}}
\newcommand{\Shgko}{S_{g,\tilde \Kcal}}
\renewcommand{\Im}{{\rm Im}}
\newcommand{\gmc}{\mathbf{G}_{m, \mathbf C}}
\newcommand{\xg}{\mathbf X_{g}}
\newcommand{\shdagsp}{(GSp(2g), \xg)}
\newcommand{\ox}{\mathcal O_X}
\newcommand{\iw}{\leftexp{I}{W}}
\newcommand{\rampi}{\Ram(\pi)}
\DeclareMathOperator{\bc}{BC}
\renewcommand{\div}{{\rm div}}
\newcommand{\gv}{\mathbf G_v}
\DeclareMathOperator{\Isomcal}{\mathscr{I}\!\!\mathit{som}}
\begin{document}

\author{Wushi Goldring and Jean-Stefan Koskivirta}

\begin{abstract}
We construct group-theoretical generalizations of the Hasse invariant on strata closures of the stacks $\GZip^{\mu}$.
Restricting to zip data of Hodge type, we obtain a group-theoretical Hasse invariant on every Ekedahl-Oort stratum closure of a general Hodge-type Shimura variety. A key tool is the construction of a stack of zip flags $\GF^\mu$, fibered in flag varieties over $\GZip^{\mu}$. 
It provides a simultaneous generalization of the "classical case" homogeneous complex manifolds studied by Griffiths-Schmid and the "flag space" for Siegel varieties studied by Ekedahl-van der Geer. 

Four applications are obtained: 
(1) Pseudo-representations are attached to the coherent cohomology of Hodge-type Shimura varieties modulo a prime power. (2) Galois representations are associated to many automorphic representations with non-degenerate limit of discrete series archimedean component.
(3) It is shown that all Ekedahl-Oort strata in the minimal compactification of a Hodge-type Shimura variety are affine, thereby proving a conjecture of Oort. 
(4) Part of Serre's letter to Tate on mod $p$ modular forms is generalized to general Hodge-type Shimura varieties.

\end{abstract}

\pagestyle{plain}
\maketitle
\setcounter{tocdepth}{1}
\tableofcontents
\section*{Introduction}  
\renewcommand{\thesubsection}{{I.\arabic{subsection}}}
\renewcommand{\thesubsubsection}{\thesubsection.\arabic{subsubsection}}

This work takes the first step in a program that connects two areas:

\begin{enumerate}[label=(\Alph*)]
\item {\em Automorphic Algebraicity}: The inherent algebro-geometric properties of automorphic representations, particularly those conjectured by the Langlands correspondence. \label{item-algebraicity-Langlands}
\item {\em $G$-Zip geometricity}: The geometry engendered by the theory of $G$-Zips, including the Ekedahl-Oort (EO) stratification of Shimura varieties, their flag spaces and Hasse invariants.  \label{item-zips}
\end{enumerate}

In contrast with previous work, one of the novel features of our approach is to consider the two areas above simultaneously. 
While each has separately undergone significant developments over the past fifteen-twenty years, there has been surprisingly little work relating Automorphic Algebraicity with $G$-Zip Geometricity. Most of the papers cited above on the Langlands correspondence only used the classical Hasse invariant, but not deeper aspects of the EO stratification. 
At the same time, the works which developed the theory of the EO stratification -- and more recently of $G$-Zips -- rarely studied applications to the Langlands correspondence.

We were also inspired by the possibility of connections with a third area:
\begin{enumerate}[label=(\Alph*),resume]
\item {\em Griffiths-Schmid Algebraicity}: Is there an algebro-geometric framework which applies to Griffiths-Schmid manifolds?  \label{item-Griffiths-Schmid}
\end{enumerate}
Recall that Carayol has pursued a program of relating Automorphic Algebraicity and Griffiths-Schmid Algebraicity over the last twenty years \cite{Carayol-LDS-1,Carayol-LDS-2,Carayol-LDS-3,Carayol-Knapp,Carayol-laumon-volume}. His program was developed further by Green-Griffiths-Kerr \cite{Green-Griffiths-Kerr-CBMS-Texas}.

The common pursuit of~\ref{item-algebraicity-Langlands},~\ref{item-zips} and~\ref{item-Griffiths-Schmid} rests on two themes developed by Deligne, Serre and their collaborators. These themes are (i) {\em geometry-by-groups} and (ii) {\em characteristic-shifting} -- back and forth between characteristic $0$ and $p$.

Regarding (i), geometry-by-groups manifests itself in two stages. The first is that geometric objects at the heart of \ref{item-algebraicity-Langlands}, \ref{item-zips}, \ref{item-Griffiths-Schmid} -- Shimura varieties, stacks of $G$-Zips and Griffiths-Schmid manifolds -- are all constructed from the same group-theoretic template:
 A pair $(G, \mu)$ consisting of a reductive group $G$ and a cocharacter $\mu$. The second stage is guided by the more general hypothesis that {\em all} objects constructed from reductive groups should admit a close-knit relationship with algebraic geometry. In our setting, two fundamental test-cases are  Automorphic Algebraicity and Griffiths-Schmid Algebraicity.
 
As for (ii), it is well-known that the method of characteristic-shifting applies throughout algebraic geometry; \eg  the characteristic $p$ approach of Deligne-Illusie to Kodaira vanishing and the degeneration of the Hodge-de Rham spectral sequence. Inspired by Deligne-Serre, this paper applies characteristic-shifting {\em in tandem} with geometry-by-groups: The most basic application is to $G$-Zip Geometricity. Joined to the latter, the duo is next applied to Automorphic Algebraicity. 

Further elaboration of our program is given in our papers \cite{Goldring-Koskivirta-zip-flags,Goldring-Koskivirta-global-sections-compositio}
and joint work in progress with B. Stroh and Y. Brunebarbe \cite{Brunebarbe-Goldring-Koskivirta-Stroh-ampleness}. 
Building on the current work, all of these papers compound evidence that $G$-Zip Geometricity is a "mod $p$ Hodge theory" which interacts with classical Hodge theory via characteristic-shifting. 
Specifically, our results suggest that stacks of $G$-Zips are mod $p$ analogues of period domains (and more generally Mumford-Tate domains, of which Griffiths-Schmid manifolds are quotients \cite{Green-Griffiths-Kerr-Mumford-Tate-Domains-book}). 
An idea along these lines was first put forth for $GL(n)$-Zips by Moonen-Wedhorn in the introduction of \cite{Moonen-Wedhorn-Discrete-Invariants}. The work of Griffiths-Schmid on the manifolds which bear their name \cite{Griffiths-Schmid-homogeneous-complex-manifolds} motivates the flag spaces which play a key role in this work, see Remark~\ref{rmk-GS}.

\subsection{Group-theoretical Hasse invariants} 
\label{sec-intro-gp-hasse}
The main technical result of this paper is the construction of group-theoretical Hasse invariants on strata closures in the stacks $\GZip^{\mu}$ (\Th~\ref{th-intro-gp-hasse}). 
As recalled below,  the theory of the EO stratification has evolved in three stages, progressively shifting from a combinatorial viewpoint to a group-theoretic one. Our work develops a fourth stage in this progression.  
\subsubsection{The Ekedahl-Oort stratification} 

Initially, Oort defined a stratification of $\Acal_g \otimes \fp$, the moduli space of principally polarized abelian varieties in characteristic $p>0$, by isomorphism classes of the $p$-torsion \cite{Oort-stratification-moduli-space-abelian-varieties}. 
He parametrized strata combinatorially by "elementary sequences". 
Later on, Moonen used the canonical filtration of a $BT_1$ with PEL-structure to give a group-theoretical classification of these \cite{moonen-gp-schemes}. He described isomorphism classes of $BT_1$'s over an algebraically closed field $k$ of characteristic $p$ as a certain subset ${}^I W$ of the Weyl group $W$ of the reductive group attached to the PEL-structure.

In the third stage, in a series of papers, Moonen, Wedhorn, Pink and Ziegler defined the algebraic stack $\GZip^\mu$, \cite{Moonen-Wedhorn-Discrete-Invariants,Wedhorn-ordinariness-Shimura-varieties,Pink-Wedhorn-Ziegler-zip-data,PinkWedhornZiegler-F-Zips-additional-structure}, whose $k$-points parametrize isomorphism classes of $BT_1$'s with $G$-structure over $k$. Viehmann and Wedhorn showed that the special fiber $\Shko$ of any PEL-type Shimura variety admits a universal $G$-zip of type $\mu$, which gives rise to a faithfully flat morphism of stacks $\zeta:\Shko\to \GZip^\mu$, \cite{Viehmann-Wedhorn}. 
By definition, the fibers of $\zeta$ are the EO strata of $\Shko$; this definition agrees with those of Ekedahl-Oort and Moonen in the Siegel and PEL cases respectively.
In his thesis, Zhang constructed a universal $G$-zip over the special fiber of a general Hodge-type Shimura variety. He proved that the induced map $\zeta$ is smooth \cite{ZhangEOHodge} (see also \cite{Wortmann-mu-ordinary}).

\subsubsection{Hasse invariants}
The theory of Hasse invariants in its modern form goes back to the algebro-geometric pursuit of modular forms modulo $p$, pioneered by Deligne, Katz, Serre and others in the late 1960's and early 1970's. Since then, many people have studied generalizations and applications of the classical Hasse invariant of an abelian scheme \cf \cite{Goren-partial-hasse,Ito-hasse,Emerton-Reduzzi-Xiao}. 

The work \cite{Goldring-Nicole-mu-Hasse} by the first author and Nicole was the first to produce a Hasse invariant on a general class of Shimura varieties -- those of PEL-type A -- whose classical ordinary locus is often empty.
The second author and Wedhorn extended this result to all Shimura varieties of Hodge-type, \cite{Koskivirta-Wedhorn-Hasse}.
Their work was the first to construct a group-theoretical generalization of the Hasse invariant on the stack of $G$-Zips. A partial group-theoretical result concerning smaller strata was also obtained by the second author \cite{Koskivirta-compact-hodge}. It is used as a starting point for the construction of Hasse invariants in this paper.  

One advantage of the $G$-Zip approach to Hasse invariants is that sections obtained by pull-back from this stack to a Shimura variety are automatically Hecke-equivariant. Another is that geometric properties of such sections can be read off from a root datum of $G$.  
\subsubsection{New results on Hasse invariants} \label{sec-intro-new-hasse}
Let $p$ be a prime ($p=2$ allowed). 
Let $G$ be a connected, reductive $\fp$-group and $\mu \in X_*(G)$ a cocharacter. 
The works of Moonen-Wedhorn \cite{Moonen-Wedhorn-Discrete-Invariants} and Pink-Wedhorn-Ziegler \cite{Pink-Wedhorn-Ziegler-zip-data,PinkWedhornZiegler-F-Zips-additional-structure} define the stack $\GZip^\mu$  of $G$-zips of type $\mu$ (\Def~\ref{def zip datum}) and attach to the pair $(G,\mu)$ the {\em zip group} $E$  (\S \ref{def zip datum}). This group acts naturally on $G$ and one has $\GZip^{\mu} \cong [E\backslash G]$. The $E$-orbits in $G$ are locally closed subsets $G_w$, parametrized by elements $w\in {}^I W$ (\S\ref{subsection strata GZip}).
Let $L=\cent(\mu)$ be the centralizer of $\mu$ in $G$. Every character $\chi \in X^*(L)$ gives rise to a line bundle $\Vscr(\chi)$ on $\GZip^\mu$ (\S\ref{Gvarstacks}). It satisfies $\Vscr(N\chi)=\Vscr(\chi)^N$ for all $N\ge 1$.
\begin{theorem}[Group-theoretical Hasse invariants, \Th~\ref{GTHI}]\label{th-intro-gp-hasse}
Let $G_w\subset G$ be an $E$-orbit;  $\overline{G}_w$ its Zariski closure\footnote{Unless otherwise stated, the Zariski closure is always equipped with the reduced scheme structure.}. Assume $\chi\in X^*(L)$ is $(p,L)$-admissible \textnormal{(\Def~\ref{def-orb-p-close})}. Then there exists $N_w\geq 1$ and a section $h_w\in H^0([E\backslash \overline{G}_w],\Vscr(\chi)^{N_w})$ whose non-vanishing locus is exactly the substack $[E\backslash G_w]$.
\end{theorem}
We call the sections $h_w$ afforded by \Th~\ref{th-intro-gp-hasse} {\em group-theoretical Hasse invariants}. The space $H^0([E\backslash G_w],\Vscr(\chi))$ has dimension $\leq 1$ for any $\chi\in X^*(L)$, hence the sections $h_w$ of \Th~\ref{th-intro-gp-hasse} are unique up to scalar.

Let $f:(G_1,\mu_1)\to (G_2,\mu_2)$ be a finite morphism of cocharacter data and $\widetilde{f}:\GoneZip^{\mu_1}\to \GtwoZip^{\mu_2}$
the induced map of stacks (\S\ref{cochardata}). Put $L_i=\cent(\mu_i)$.

\begin{corollary}[Discrete fibers, \Th~\ref{discrete_fibers}] \label{cor-intro-discrete-fibers} 
Assume there exists a $(p,L_2)$-admissible $\chi\in X^*(L_2)$, whose restriction to $L_1$ is orbitally $p$-close \textnormal{(\Def~\ref{def-orb-p-close})}. Then $\widetilde{f}$ has discrete fibers on the underlying topological spaces.
\end{corollary}

\begin{rmk}\label{rmk-intro-discrete-fibers}
The corollary generalizes \cite[\Cor~3]{Koskivirta-compact-hodge} about the $\mu$-ordinary locus. In \cite[\Th~2]{Goldring-Koskivirta-zip-flags}, we generalized  \Cor~\ref{cor-intro-discrete-fibers} above as follows: We proved that if $f:(G_1,\mu_1)\to (G_2,\mu_2)$ is a morphism with central scheme-theoretic kernel, then $\widetilde{f}$ has discrete fibers. The extra assumption of \Cor~\ref{cor-intro-discrete-fibers} is thus unnecessary.
\end{rmk}

\subsection{The EO stratification of Hodge-type Shimura varieties} \label{sec-intro-EO} For the rest of the introduction, assume $p>2$.
 Suppose $\gx$ is a Shimura datum of Hodge type. Assume $\GG$ is unramified at some prime $p$ and $\Kcal=\Kcal_p\Kcal^p$ is an open compact subgroup of $\gofaf$ with $\Kcal_p \subset \gofqp$ hyperspecial and $\Kcal^p \subset \gofafp$. 
Let $\Shk$ be the Kisin-Vasiu integral model at $p$, of level $\Kcal$, of the associated Shimura variety $\shgx$ (\S\ref{sec-shimura-integral}). Write $\Shko$ for the special fiber of $\Shk$ at a a prime $\pfr$ of the reflex field dividing $p$.

Let $[\mu]$ be the $\GG(\CC)$-conjugacy class of cocharacters deduced from $\XX$. 
By Zhang \cite{ZhangEOHodge}, there is a smooth morphism 
\[\zeta:\Shko \longrightarrow \GZip^{\mu}\] 
where $(G, [\mu])$ is the reduction modulo $p$ of an integral model of $(\GG_{\CC}, [\mu])$ (see \S\S\ref{sec-shimura-root-data-mu},\ref{sec-univ-Gzip}). Let $S_w:=\zeta^{-1}([E\backslash G_w])$ for $w\in {}^I W$.

We first describe the key applications of \Th~\ref{th-intro-gp-hasse} to the EO stratification of $S_{\Kcal}$ and then discuss extensions to compactifications. In particular, note that the following three corollaries are completely independent of the theory of compactifications.

Let $\omega$ be the Hodge line bundle on $\Shko$ associated to a symplectic embedding $\varphi:\gx \to \shdagsp$. Let $\eta_{\omega}$ be the character of $L$ satisfying $\omega=\Vscr(\eta_{\omega})$ (\S\ref{sec-vector-bundle-dictionary}). We have shown \cite[\Th~1.4.4]{Goldring-Koskivirta-quasi-constant} that $\eta_{\omega}$ is quasi-constant (\Def~\ref{def-orb-p-close}\ref{item-def-q-const}). Moreover, $\eta_{\omega}$ is easily seen to be $L$-ample (\Def~N.5.1). A quasi-constant character is orbitally $p$-close for all $p$ (\Rmk~\ref{rmk-cond-char}). Since $(p, L)$-admissible is defined as $L$-ample and orbitally $p$-close (\Def~\ref{def-orb-p-close}), $\eta_{\omega}$ is $(p,L)$-admissible for all $p$. Hence:

\begin{corollary}[\Cor~\ref{cor-hasse-hodge}]
\label{cor-intro-hasse-hodge} Suppose $(G, \mu)$ arises by reduction modulo $p$ from a Hodge-type Shimura datum and $\chi=\eta_{\omega}$. Then the conclusion of \textnormal{\Th~\ref{th-intro-gp-hasse}} holds for all primes $p$.
\end{corollary}

\begin{rmk}\label{intro-rmk-second-proof}
There is another proof of \Cor~\ref{cor-intro-hasse-hodge} that does not use the notion of "quasi-constant characters" and the result of \cite[\Th~1.4.4]{Goldring-Koskivirta-quasi-constant}. By \cite[\Cor~1]{Goldring-Koskivirta-zip-flags}, Corollary \ref{cor-hasse-hodge} holds for a more general class of pairs $(G,\mu)$, namely the "maximal" ones (\loccitn, \S2.4). The proof is based on the discrete fiber theorem (\loccitn, \Th~2).
\end{rmk}

Pulling back the sections of \Cor~\ref{cor-intro-hasse-hodge} along $\zeta$, we obtain Hasse invariants for all EO strata of $\Shko$. 
\begin{corollary}[EO Hasse invariants, \Cor~\ref{cor-hasse-shimura}]
\label{cor-intro-hasse-shimura}
 For every EO stratum $S_w \subset \Shko$, there exists $N_w\geq 1$ and a section $h_w\in H^0(\overline{S}_w,\omega^{N_w})$ whose non-vanishing locus is precisely $S_w$.
\end{corollary}
The sections $h_w$ are also $\gofafp$-equivariant, see \Cor~\ref{cor-hasse-shimura}. 
Since the nonvanishing locus of a section of an ample line bundle on a proper scheme is affine, we deduce:
\begin{corollary}[Affineness, compact case; \Cor~\ref{cor-affine-cpt-main-text}] 
\label{cor-affine-cpt} Assume $\gx$ is a Shimura datum of compact type. Then all EO strata in $\Shko$ are affine.
\end{corollary}

The following results concern extending the EO stratification and its Hasse invariants to compactifications. Let $\Shktoro$ be one of the proper toroidal compactifications of $\Shko$ constructed by Madapusi-Pera  (\S\ref{sec-toroidal-review}). In the next theorem, we do not assume $\Shktoro$ is smooth, but we do require a technical log-integrality assumption related to $\Sigma$, see \S\ref{sec-univ-semi-abel}, \S\ref{sec-G-zip-extension}.

\begin{theorem}[\Th~\ref{th-tor-ext-Gzip}] One has:   \label{th-intro-tor-min}   
\begin{enumerate}[label=(\alph*)]
\item \label{item-intro-zeta-extends}
The map $\zeta$ admits an extension
$\zeta^{\Sigma}_{\Kcal}: \Shktoro \longrightarrow \GZip^{\mu}$.
\item \label{item-intro-hasse-tor} Let $G_w \subset  G$ be an $E$-orbit; 
put $S_w^{\Sigma}=(\zeta^{\Sigma}_{\Kcal})^{-1}([E\backslash G_w])$ and $S_w^{\Sigma,*}=(\zeta^{\Sigma}_{\Kcal})^{-1}([E\backslash \overline{G}_w])$. Then the Hasse invariant $ h_w\in H^0(\overline{S}_w, \omega^{N_w}))$ of 
\textnormal{\Cor~\ref{cor-intro-hasse-shimura}} extends to $h_w^{\Sigma} \in H^0(S_w^{\Sigma,*}, \omega^{N_w})$ with non-vanishing locus $S_w^{\Sigma}$.
\end{enumerate}
\end{theorem}
Furthermore, the extension $\zeta^{\Sigma}_{\Kcal}$ is $\gofafp$-equivariant \eqref{eq-zeta-triangles-toroidal}. Let $S_{\Kcal}^{\min}$ be the minimal compactification of $S_{\Kcal}$ (\S\ref{sec-minimal-review}).
Define $S_w^{\min}$ as the image of $S_w^{\Sigma}$ by the natural map $\Shktoro\to \Shkmino$. We call $S_w^{\min}$ the extended EO strata in $\Shkmino$. Combining \Th~\ref{th-intro-tor-min} with a
 Stein factorization argument, we deduce:

\begin{corollary}[Affineness, noncompact case; \Prop~\ref{prop-min-cpt-strata}] \label{cor-affine-noncpt} Suppose $\gx$ is a Shimura datum of noncompact, Hodge type. Then the extended EO strata $S_w^{\min}$ are affine for all $w \in {}^I W$. 
\end{corollary}
In the Siegel case -- for $\Acal_g \otimes \fp$ with a suitable level structure -- \Cor~\ref{cor-affine-noncpt} was conjectured by Oort almost twenty years ago \cite[14.2]{Oort-stratification-moduli-space-abelian-varieties}. As far as we know, even this special case of \Cor~\ref{cor-affine-noncpt} is new. The affineness of the generic EO stratum for Hodge-type Shimura varieties was proved by the second author and T. Wedhorn (\cite[\Cor~2]{Koskivirta-Wedhorn-Hasse}).

In the restricted special case of Shimura varieties of PEL-type A and C, the thesis of G. Boxer  \cite{Boxer-thesis,Boxer-thesis-arxiv} obtained the Corollaries~\ref{cor-intro-hasse-shimura}, ~\ref{cor-affine-cpt},~\ref{cor-affine-noncpt} and a variant of \Th~\ref{th-intro-tor-min} simultaneously and independently from us.

\subsection{Applications to the Langlands correspondence} \label{sec-auto-to- galois} 
\subsubsection{Galois representations associated to automorphic representations} 
\label{sec-intro-galois-to-autom}
The Langlands correspondence (for number fields) predicts that the distinguished subclass of automorphic representations having integral infinitesimal character -- those termed $L$-algebraic in    \cite{Buzzard-Gee-conjectures} -- satisfy a number of algebraicity properties; see \loccit for some precise conjectures.  In particular, if $F$ is a number field, $G$ is a connected, reductive $F$-group and $\pi$ is an $L$-algebraic automorphic representation of $G$, then it is conjectured that, for every prime $p$, there exists an associated $L$-group-valued, continuous Galois representation \begin{equation} \tag{Gal}
R_{p,\iota}(\pi):\galf \longrightarrow \lg(\qpbar). 
\label{eq-langlands-correspondence}
\end{equation}

Following the pioneering works of Deligne  and Deligne-Serre on classical modular forms \cite{Deligne-Gal-Rep,Deligne-Serre-weight-1}, there has been an increasingly sustained effort by a growing number of people to construct the Galois representations~\eqref{eq-langlands-correspondence}. 
For a discussion and references concerning previous work on the association~\eqref{eq-langlands-correspondence}, \cf the first author's earlier papers \cite{Goldring-Galois-reps-HLDS,Goldring-Vancouver-expository}.
Thus far, most works have been limited to the (weaker) construction of $r \circ R_{p,\iota}(\pi) $, where $r:\lg \longrightarrow GL(n)$ is some low-dimensional representation of $\lg$. 
A notable exception is the recent preprint of Kret-Shin \cite{Kret-Shin-spin-valued-Galois-reps}, which constructs $GSpin(2g+1)$-valued Galois representations for certain $\pi$ of the split symplectic similitude group $GSp(2g)$.

Most of the work to date on~\eqref{eq-langlands-correspondence} has been restricted to cases when the archimedean component $\pi_{\infty}$ is regular. Prior to this work, the limited number of results which considered irregular $\pi_{\infty}$ were all in cases when $\pi_{\infty}$ is a holomorphic limit of discrete series (LDS); this is the mildest possible type of irregularity \cite{Taylor-GSp4,Jarvis-hilbert-LDS,Goldring-Galois-reps-HLDS,Goldring-Nicole-mu-Hasse}. For a detailed classification of archimedean components in terms of~\eqref{eq-langlands-correspondence}, see \cite{Goldring-LDS-nondeg-deg-functoriality}.
\subsubsection{Pseudo-representations associated to torsion} \label{sec-intro-torsion}
Starting with Ash \cite{Ash-mod-p-duke}, a number of {\em torsion} analogues of the Langlands correspondence have been proposed, where automorphic representations are replaced by systems of Hecke eigenvalues appearing in the cohomology (Betti or coherent) of a locally symmetric space with mod $p^n$ coefficients and the Galois representations~\eqref{eq-langlands-correspondence} are replaced by mod $p^n$-valued pseudo-representations. The interest in such ``torsion Langlands correspondences'' has grown considerably due to the pivotal role that they play in the Calegari-Geraghty program of pushing the Taylor-Wiles method beyond the regular case \cite{Calegari-Geraghty-beyond-taylor-wiles}. 

Scholze achieved a breakthrough in the Betti case, by associating pseudo-representations to the cohomology of the locally symmetric spaces of $GL(n)$ over  a $CM$ field, with mod $p^n$ coefficients \cite{Scholze-torsion}. 
By contrast, much less was known prior to this work concerning the coherent cohomology of Shimura varieties mod $p^n$.
The only case where pseudo-representations were associated to higher coherent cohomology (\ie $H^i$, $i>0$) was that of Hilbert modular varieties, due to Emerton-Reduzzi-Xiao \cite{Emerton-Reduzzi-Xiao}.   

\subsubsection{New results about the Langlands correspondence}
\label{sec-intro-new-results-langlands}

In this paper, we prove general results which associate pseudo-representations to the coherent cohomology of Hodge-type Shimura varieties modulo a prime power. Consequently, we deduce results about the existence of $r \circ R_{p,\iota}(\pi)$ when $G$ is a $\QQ$-group admitting a Hodge-type Shimura variety and $\pi_{\infty}$ is an arbitrary non-degenerate LDS. 

Let $\Sscr_{\Kcal}^{\Sigma}$ be a toroidal compactification of a Hodge type Shimura variety $\Sscr_{\Kcal}$ as in \S\ref{sec-intro-EO}. We now add the assumption that $\Sscr_{\Kcal}^{\Sigma}$ is smooth. Write $\Shktorn$ for its reduction mod $p^n$, so that $S_{\Kcal}^{\Sigma}=\Sscr_{\Kcal}^{\Sigma, 1}$ is its special fiber. 
Let $\vsubeta$ be the subcanonical extension to $\Shktor$ of an automorphic vector bundle $\veta$ on $\Shk$ (\S\ref{sec-torsors}). 
Let $\Hcal$ denote the (global, unramified, prime-to-$p$) Hecke algebra of $\GG$ and $\Hcal^{i,n}(\eta)$ its image in $\End (H^i(\Shktorn, \vsubeta))$ (\S\ref{sec-hecke-algebras}).

The following two theorems summarize our results; see \Ths~\ref{th-reduction-to-h0},~\ref{th-torsion-general},~\ref{th-nondeg-lds-general} for the precise statements. Unconditional analogues of \Th~\ref{th-intro-galois} for unitary groups are given in \Ths~\ref{th-torsion-unitary}, \ref{th-nondeg-lds-unitary}.   
\begin{theorem}[Factorization]  \label{th-intro-factor} Let $\veta$ be an automorphic vector bundle on $\Shk$ and $i\geq 0$ an integer. Suppose $\shgx$ is either of compact-type, or of PEL-type, or that $(S_{\Kcal}^{\Sigma}, \veta)$ satisfies \textnormal{Conditions~\ref{cond-zero-dim}, \ref{cond-hdi}}. Then, 
for every $\delta \in \rgeqz$ there exists a $\delta$-regular  weight $\eta'$ \textnormal{(\Def~\ref{def-delta-reg})} such that \[ \label{eq-factor-intro}
\Hcal \twoheadrightarrow \Hcal^{i,n}(\eta) \mbox{ factors through } \Hcal \twoheadrightarrow \Hcal^{0,n}(\eta').
\]
\end{theorem}

For Shimura varieties of PEL type $A$ or $C$, Boxer  independently and simultaneously obtained the weaker statement that $\Hcal \twoheadrightarrow \Hcal^{i,n}(\eta)$ factors through  $\Hcal \twoheadrightarrow \Hcal^{0,n}(\eta+a\eta_{\omega})$ for infinitely many $a$, \cite{Boxer-thesis,Boxer-thesis-arxiv}. In contrast with \Th~\ref{th-intro-factor}, the weights $\eta+a\eta_{\omega}$ may all be singular; see also \Rmk~\ref{rmk-assumptions-in-main-theorem}.
\begin{theorem}
\label{th-intro-galois} Keep the hypotheses of \textnormal{\Th~\ref{th-intro-factor}} and fix $\delta \in \rgeqz$. Assume $r:\lg \to GL(n)$ satisfies that $r \circ R_{p, \iota}(\pi')$ exists for all $C$-algebraic \textnormal{(\S\ref{harish-chandra iso})}, $\delta$-regular $\pi'$. Then 
\begin{enumerate}[label=(\alph*)]
\item \label{item-th-intro-pseudo}
For every triple $(i, n, \eta)$, there exists a continuous Galois pseudo-representation \[R_{p,\iota}(r;i,n, \eta): \galq \longrightarrow \Hcal^{i,n}(\eta),\] which for unramified $v$, $v \neq p$, maps $\frob_v^j$ to the Hecke operators $T_v^{(j)}$ defined in \textnormal{\S\ref{sec-satake}}.
\item \label{item-th-intro-autom} Assume $\pi$ is a cuspidal automorphic representation of $\GG$ with $\pi_{\infty}$ a non-degenerate, $C$-algebraic LDS and $\pi_p$ unramified. Then  $r \circ R_{p, \iota}(\pi)$ exists too.

\end{enumerate}
\end{theorem}

The case of cohomological degree $i>0$ reveals genuinely new obstacles (for both \Th~\ref{th-intro-factor} and \Th~\ref{th-intro-galois}). 
In terms of automorphic representations, $i>0$ corresponds to the condition that the non-degenerate LDS $\pi_{\infty}$ is non-holomorphic. 
When $i>0$, it is apparent that the original Deligne-Serre method of multiplication by a single mod $p$ automorphic form is insufficient.

Our new idea to overcome this problem is to use not just one mod $p$ automorphic form, but rather a whole family of such \viz the strata Hasse invariants $h_w$ and their toroidal extensions $h_w^{\Sigma}$. 
These are `generalized mod $p$ automorphic forms', in that they aren't defined on the whole special fiber of the Shimura variety, but only on EO strata closures. 
As pullbacks of sections on strata of $\GZip^\mu$, the $h_w, h_w^{\Sigma}$ are $\gofafp$-equivariant (\S\ref{sec-univ-Gzip},~\Th~\ref{th-tor-ext-Gzip}).  The reduction to the case $i=0$ is accomplished by studying the long exact sequences in coherent cohomology associated to technical modifications of the $h_w,h_w^{\Sigma}$.

Results similar to \Th~\ref{th-intro-factor} and \Th~\ref{th-intro-galois} were obtained simultaneously and independently by Pilloni-Stroh \cite{Pilloni-Stroh-CoherentCohomology}. Their method is completely different: It is based on Scholze's theory of perfectoid Shimura varieties. The analogue of \Th~\ref{th-intro-galois}\ref{item-th-intro-pseudo} in \loccit concerns the coherent cohomology of Scholze's integral models of $\shgx$ rather than the modular Kisin-Vasiu models studied here. Boxer also announced applications of his thesis to the construction of Galois representations, but as far as we know, no preprint containing such results has appeared.

\subsection{Serre's Letter to Tate on mod $p$ modular forms}

Let $N$ be prime to $p$ and let $X(N)$ (resp. $(X(N)^{\sesi}$) be the modular curve of full level $N$ (resp. its supersingular locus) in characteristic $p$.
In his letter to Tate \cite{Serre-Two-Letters-Modular-Forms}, Serre first showed that the systems of Hecke eigenvalues which appear in $ \bigoplus_{k \in \NN} H^0(X(N), \omega^k)$ are the same as those which  appear in $\bigoplus_{k \in \NN} H^0(X(N)^{\sesi}, \omega^{k})$. He went on to show that these systems of Hecke eigenvalues are also precisely those that appear in Gross' algebraic modular forms for the quaternion algebra $B_{p,\infty}$ ramified at $\{p,\infty\}$.

As a further application of group-theoretical Hasse invariants, we generalize Serre's first result to arbitrary Shimura varieties of Hodge type. Let $D$ be the boundary divisor of $\Sscr_{\Kcal}$ in $\Sscr_{\Kcal}^{\Sigma}$. Let $S_e$ be the (unique) zero-dimensional EO stratum of $\Shko$.

\begin{theorem}[see \Th~\ref{th-serre-letter}]
\label{th-intro-serre-letter} If $\gx$ is neither of compact-type, nor of PEL-type, then assume that there exists a a $\gofafp$-equivariant Cartier divisor $D'$ such that $D'_{\red}=D$ and $\omega^k(-D')$ is ample on $\Sscr_{\Kcal}^{\Sigma}$ for all $k \gg 0$.
As $\eta$ ranges over all weights, the systems of Hecke eigenvalues appearing in each of $\bigoplus_{\eta}H^0(\Shko, \veta)$, $\bigoplus_{\eta}H^0(S_{\Kcal}^{\Sigma}, \vsubeta)$ and $ \bigoplus_{\eta}H^0(S_e, \veta)$ are the same. In particular the number of such systems is finite.
     \end{theorem}
This application lies on the border between \ref{item-algebraicity-Langlands} Automorphic Algebraicity and \ref{item-zips} $G$-Zip Geometricity. The existence of such $D'$ is known in the PEL case by Lan \cite[\Th~7.3.3.4]{Lan-book-thesis} (see also \Rmk~\ref{rmk-cartier-ample} below) and it should follow similarly in the general Hodge case from the work of Madapusi-Pera \cite{MadapusiHodgeTor}.  In future work, we hope to return to Serre's second result in this level of generality. It is likely that it follows from the recent preprint \cite{Xiao-Zhu-geometric-satake}, but we have not checked this in detail.

No cases of \Th~\ref{th-intro-serre-letter} were previously known where the classical superspecial locus of $\Shko$ is empty\footnote{To avoid confusion, we specify that for us the classical superspecial locus means those points whose underlying abelian scheme is isomorphic to a product of supersingular elliptic curves. Some refer to this locus simply as the ``superspecial locus'', while others reserve the same term for the zero-dimensional EO stratum. The latter two conventions are jointly incompatible, already in the PEL case.}. Serre's two results had previously been generalized by Ghitza to Siegel modular varieties \cite{Ghitza-Siegel-Mod-p-Algebraic} and by Reduzzi to a (rather restricted) class of PEL-type Shimura varieties \cite{Reduzzi-PEL-Mod-p}. Unfortunately, these works seem to contain a nontrivial error, see \Rmk~\ref{rmk-Ghitza-mistake}. Both were limited to PEL cases where the classical superspecial locus of $\Shko$ is nonempty. Analogous to the distinction between the classical ordinary locus and the $\mu$-ordinary locus (=unique open EO stratum), examples of both PEL and Hodge-type abound where the classical superspecial locus is empty. For example, if $\gx$ is of unitary type with reflex field $E \neq \QQ$ and $p$ splits completely in $E$, then the classical superspecial locus of $\Shko$ is empty.

Using \Ths~\ref{th-intro-factor} and \Th~\ref{th-intro-serre-letter}, we deduce: 

\begin{corollary}[See \Cor~\ref{cor-serre-letter-finiteness}] Make the assumptions of \textnormal{\Th~\ref{th-intro-factor}}. 
As $i$ varies over all nonnegative integers and $\eta$ varies over all weights, the number of systems of Hecke eigenvalues appearing in $$ \bigoplus_{i, \eta}H^i(\Shktoro, \vsubeta) \oplus H^i(\Shktoro, \vcaneta) \oplus H^0(S_e, \veta)$$  is finite.
\label{cor serre letter} \end{corollary}

\subsection{Outline} \label{sec-outline} Following a preliminary \S N to fix notation, this paper is naturally divided into three parts:
The primary goal of Part~\ref{part-gp-hasse} (\S\S\ref{sec-Gzips}-\ref{sec hasse invariants}) is the construction of group-theoretical Hasse invariants (\Th~\ref{th-intro-gp-hasse}). 
To this end, we introduce the stack of zip flags (\S\ref{sec-zip-flags}). Parts \ref{part-shimura} and \ref{part-galois} are concerned with applications. Part \ref{part-shimura} contains those applications which are directly concerned with the geometry of the EO stratification of Shimura varieties, but which avoid Hecke operators. By contrast, Part~\ref{part-galois} regards three applications involving the Hecke algebra: 
(i) Factorization of the Hecke algebra action on the coherent cohomology of automorphic vector bundles (\S\S\ref{sec-fact-hecke}-\ref{sec-regularity-flag-space}), 
(ii) Association of Galois pseudo-representations to coherent cohomology modulo $p^n$ and Galois representations to automorphic representations of non-degenerate LDS-type (\S\ref{sec-galois-reps}), 
(iii) Generalization of Serre's Letter to Tate (\S\ref{sec-serre-letter}). We refer to the beginning of each section for a more detailed description of its contents. 

\section*{Notation} \label{sec-notation}
\renewcommand{\thesubsection}{{N.\arabic{subsection}}}
\renewcommand{\thesubsubsection}{\thesubsection.\arabic{subsubsection}}

\setcounter{subsection}{0}
\subsection{Ring theory}
\subsubsection{} Throughout, $k$ denotes an algebraically closed field. 
\subsubsection{Adeles}  The adele ring of $\QQ$ is written $\AA$, the finite adeles $\AA_f$. Given a prime $p$, $\AA_f^p$ denotes the finite adeles with trivial $p$-adic component.
\subsubsection{CM fields}
\label{sec-cm-fields}
A number field is a finite extension of $\QQ$. In this paper, `CM field' means a number field which is either totally real or a totally imaginary quadratic extension of a totally real field. 
\subsection{Scheme theory}
\subsubsection{Base Change} \label{sec-base-change-notation}
If $A$ is a commutative ring with $1$, $M$ is an $A$-module and $B$ an $A$-algebra, we write $M_B$ for the $B$-module $M \otimes_A B$.

If $S$ is a scheme, $f:X \to S$ is an $S$-scheme and $g:T \to S$ is a morphism of schemes, write $X_T$ for the base change $X \times_S T$. If $S=\spec(A)$ and $T=\spec(B)$, then we also write $X_B$ in place of $X_T$.
\subsubsection{Frobenius twist} Let $k$ be a field of characteristic $p$, and $\sigma:k\to k$ the map $x\mapsto x^p$. For a $k$-scheme $X$, we denote by $X^{(p)}$ the fiber product $X^{(p)}:=X\otimes_{k,\sigma} k$.

\subsubsection{Reduction modulo $p^n$} 
\label{sec-reduct-mod-pn}
Let $p$ be a prime, $K/\qp$ a finite extension with ring of integers $\Ocal_K$ and maximal ideal $\pfr$. If $\Xcal$ is an $\Ocal_K$-scheme, write $\Xcal^n$ for the base change of $\Xcal$ along $\spec \Ocal_K/\pfr^n \to \spec \Ocal_K$. In particular, $\Xcal^1$ is the special fiber of $\Xcal$.

Occasionally it will be useful to consider $\Xcal^n$ for all $n \in \zgeqo$, as well $\Xcal$ and its generic fiber $\Xcal \otimes_{\Ocal_K}K$. For this purpose we adopt the convention of writing $\Xcal^+:=\Xcal$ and $\Xcal^0:=\Xcal \otimes_{\Ocal_K}K$.

\subsubsection{Sections of line bundles} 
\label{sec-notation-line-bundle}
If $X$ is a scheme, $\Lscr$ is a line bundle on $X$ and $s \in H^0(X, \Lscr)$, write $Z(s)$ for the scheme of zeroes of $s$ in $X$  (\cf \cite[\App~A, C6]{Hartshorne-Alg-Geom}) and $\nonvanish(s)$ for the open subset $X \setminus Z(s)$.

We say $s \in H^0(X, \mathcal L)$ is \emph{injective} if the map of sheaves $\ox \to \mathcal L$ given by multiplication by $s$ is injective.\footnote{Such a section is sometimes referred to as {\em regular}  (\cf \cite[\Def~11.17]{stacks-project}), but we find the term {\em injective} less confusing.} Equivalently $s$ is injective if and only if $Z(s)$ is an effective Cartier divisor in $X$.
\subsubsection{Associated Reduced Scheme} If $X$ is any scheme, $X_{\red}$ will denote the associated reduced subscheme.

\subsection{Structure Theory and Root data}
\label{sec-structure-theory-root-data}
\subsubsection{Root data}
\label{sec-root-data}
Let $G$ be an algebraic $k$-group. Write $X^*(G)$ (resp. $X_*(G)$) for the group of characters (resp. cocharacters) of $G$. If $T$ is a $k$-torus, write $\langle, \rangle$ for the perfect pairing $X^*(T)\times X_*(T)\to \ZZ$.

For a connected, reductive $k$-group $G$ and a maximal torus $T\subset G$, write $\Phi:=\Phi(G,T)$ for the $T$-roots in $G$, $\Phi^{\vee}:=\Phi^{\vee}(G,T)$ for the $T$-coroots in $G$ and $\Rcal\Dcal(G,T)=(X^*(T), \Phi, X_*(T), \Phi^{\vee})$ for the root datum of $T$ in $G$. If $L$ is a Levi subgroup of $G$ containing $T$, set $\Phi_L:=\Phi(L,T)$ (resp. $\Phi_L^{\vee}:=\Phi^{\vee}(L,T)$).

\subsubsection{Based root data} \label{sec-based-root-data}
If $B$ is a Borel subgroup of $G$ containing $T$, define the system of positive roots $\Phi^+ \subset \Phi$ by the condition that $\alpha \in \Phi^+$ when the root group $U_{-\alpha}\subset B$. Put $\Phi^-:=-\Phi^+$. The subset of simple roots in $\Phi^+$ is denoted $\Delta$ and the set of simple coroots is denoted $\Delta^{\vee}$. Write $X^*_+(T)=\{\chi \in X^*(T) | \langle \chi, \alpha^{\vee} \rangle \geq 0 \mbox{ for all } \alpha \in \Delta\}$ for the cone of $\Delta$-dominant characters. Put 
\begin{equation}
\rho:=\frac{1}{2}\sum_{\alpha \in \Phi^+} \alpha \in X^*(T)_{\QQ}.
\end{equation}
If $L$ is a Levi subgroup of $G$, set $\Delta_L:=\Delta \cap \Phi_L$, $\Delta^{\vee}_L:=\Delta^{\vee} \cap \Phi^{\vee}_L$. Write $X^*_{+,L}(T)$ for the cone of $\Delta_L$-dominant characters. Similarly, put $\Phi^+_L:=\Phi^+(L,T)$ and $\Phi^-_L:=\Phi^-(L,T)$.

\subsubsection{Weyl group}
\label{sec-notation-weyl}
Let $W:=W(G,T)$ be the Weyl group of $T$ in $G$. For $\alpha \in \Phi$, let $s_{\alpha}$ denote the root reflection about $\alpha^{\vee}$. Write $\ell:W \to \zgeqz$ for the length function of the Coxeter group $(W, \{s_{\alpha}\}_{\alpha \in \Delta})$. The longest element of $W$ is denoted by $w_0$ and the identity element by $e$.

If $L\subset G$ is a Levi subgroup containing $T$, denote by $W_L\subset W$ the Weyl group of $L$. Write $w_{0,L}$ for the longest element in $W_L$. If $P\subset G$ is a parabolic subgroup, denote its type by $\type(P)\subset \Delta$. It is normalized as follows: If $P$ contains $B$ with Levi subgroup $L$ containing $T$, then $\type(P):=\Delta_L$.

Given $I\subset \Delta$, let $W_I \subset W$ be the subgroup generated by the elements $s_\alpha$ for $\alpha \in I$. Let $w_{0,I}$ denote the longest element of $W_I$. Define ${}^I W$ (resp. $W^I$) as the subset of elements $w\in W$ which are of minimal length in the coset $W_I w$ (resp. $wW_I$). The set ${}^I W$ (resp. $W^I$) is a set of representatives for the quotients $W_I \backslash W$ (resp. $W/W_I$). The longest element of ${}^I W$ (resp. $W^I$) is $w_{0,I}w_0$ (resp. $w_0 w_{0,I}$). For a Levi subgroup $L$, one has
\begin{align}
w_{0,L} \Phi^+_L&\subset \Phi_L^- \label{eq-z-roots1}\\
w\left(\Phi^+\setminus \Phi^+_L\right)&\subset \Phi^+ \quad \textrm{for all }w\in W_L.\label{eq-z-roots2}
\end{align}

\subsubsection{Weyl chambers}
\label{sec-weyl-chamber}
Write $\alpha^{\perp}$ for the hyperplane in $X^*(T)_{\RR}$ orthogonal to $\alpha^{\vee}$.
A Weyl chamber is a connected component $C$ of $X^*(T)_{\RR}\backslash \bigcup_{\alpha \in \Phi}\alpha^{\perp}$; thus $C$ is an open cone for the archimedean topology. Write $\overline{C}$ for the closure of $C$ and $\partial C:=\overline{C}\backslash C$ for its boundary. One says that $\alpha \in \Phi$ (resp. $\alpha^{\vee} \in \Phi^{\vee}$) is $C$-positive if $\langle \chi, \alpha^{\vee} \rangle>0$ for any (equivalently every) $\chi \in C$. The $C$-positive roots (resp. coroots) form a system of positive roots (resp. coroots). 

\subsubsection{Galois action}
\label{sec-galois-action}
Now suppose that $G,T$ are $\kappa$-groups for some subfield $\kappa 
\subset k$. The root datum and Weyl group of $(G, T)$ are by definition those of $(G_k,T_k)$. The group $\gal(k/\kappa)$ acts on both $W$ and $\Rcal \Dcal(G,T)$. The actions of $W$ and $\Gal(k/\kappa)$ on $X^*(T)$ are related by the formula
\begin{equation} \label{varphicirc}
{}^\sigma (w\lambda)={}^\sigma w \ {}^\sigma \lambda
\end{equation}
for all $w\in W$, $\sigma \in \Gal(k/\kappa)$ and $\lambda \in X^*(T)$.

If $B$ is a Borel subgroup of $G_{
\kappa'}$ for some finite extension $\kappa'/\kappa$, the based root datum of $(G, B, T)$ is that of the triple $(G_k, B_k, T_k)$.
\subsubsection{Ramification}
\label{sec-def-ramification}
In the notation of \S\ref{sec-galois-action}, suppose $\kappa$ is a number field. The set of finite places $v$ of $\kappa$ where $G$ ramifies, together with all the infinite places is denoted $\Ram(G)$. Similarly, if $\pi$ is an admissible representation of $G(\AA_{\kappa})$ given by a restricted tensor product of components $\pi_v$, then $\Ram(\pi)$ denotes the set of places $v$ where $\pi_v$ is ramified, together with all the archimedean places.  
%\subsection{Associated Sheaves}
%\label{sec-notn-assoc-sheaves}
%Our choice of positive roots $\Phi^+$ has the advantage that ample line bundles $\Lcal_{G/B}(\lambda)$ on the flag variety $G/B$ correspond precisely to weights $\lambda \in X^*(T)$ which are both dominant and regular.

\subsection{Quotients}
If an algebraic $k$-group $G$ acts on a $k$-scheme $X$, we denote by $[G\backslash X]$ the associated quotient stack. Denote by $\pi:X\to [G\backslash X]$ the natural projection. If $\cdot:X\times G \to X$ is a right action, we define $[X/G]$ as $[G\backslash X]$ for the action $g\star x := x\cdot g^{-1}$.

\subsubsection{Vector bundles on quotient stacks}\label{Gvarstacks}
Let $G$ act on $X$ and $\rho:G\to GL_{n,k}$ be an $n$-dimensional $k$-representation of $G$. There is an associated vector bundle $\Vscr(\rho)$ on the quotient stack $[G\backslash X]$, such that
\begin{equation} \label{sectionquot}
H^{0}\left(\left[G\backslash X\right],\Vscr(\rho)\right)=\{{f:X\to \AA^n} \ | \ f(g\cdot x)=\rho(g)f(x), \ \forall g\in G,x\in X \}.
\end{equation}
\subsection{Conditions on characters} \label{sec-conditions-characters}
Let ($G, B, T)$ as in \S\ref{sec-galois-action} and $L\subset G$ a Levi subgroup containing $T$. Let $I\subset \Delta$ be the type of the parabolic $P=LB$.
\begin{definition} We say that $\chi \in X^*(L)$ is $L$-\emph{ample} if it satisfies $\langle \chi, \alpha^{\vee} \rangle <0$ for all $\alpha \in \Delta \setminus I$.   \label{def-ample-character} \end{definition}
\begin{rmk}\label{rmk-ample} By \cite[II, (4.4)]{jantzen-representations} the following are equivalent: \begin{enumerate}[label=(\alph*)] \item $\chi$ is $L$-ample.
\item The associated line bundle $\Lscr(\chi)$ is anti-ample on $G/P$.
\end{enumerate} 
\label{rmk-ample-character}
\end{rmk}

\begin{definition}
\label{def-orb-p-close}
Let $\chi \in X^*(T)$. For every coroot $\alpha^{\vee}$ satisfying $\langle \chi , \alpha^{\vee} \rangle \neq 0$, put 
\[ \orb(\chi, \alpha^{\vee})=
\left\{ \left.  \frac{|\langle \chi, \sigma \alpha^{\vee} \rangle|}{|\langle \chi, \alpha^{\vee} \rangle |} ~\right| 
\sigma \in   W \rtimes \Gal(k/\kappa)   \right\}. \]
We say that $\chi$ is
\begin{enumerate}[label=(\alph*)] 
\item \label{item-def-orb-p-close} 
\emph{orbitally $p$-close} if  $\max \orb(\chi, \alpha^{\vee})\leq p-1$  for all $\alpha \in \Phi$ with  $\langle \chi, \alpha^{\vee} \rangle \neq 0$.
\item \label{item-def-q-const} 
\emph{quasi-constant} if $\orb(\chi, \alpha^{\vee}) \subset \{0,1\}$ for all $\alpha \in \Phi$ with  $\langle \chi, \alpha^{\vee} \rangle \neq 0$. 
\item 
\label{item-def-p-small} 
\emph{$p$-small} if 
$|\langle \chi, \alpha^{\vee} \rangle | \leq p-1$ for all $\alpha \in \Phi$.
\item
\label{item-def-p-adm}
\emph{$(p,L)$-admissible} if $\chi$ is orbitally $p$-close and $L$-ample.
\end{enumerate}
\end{definition}
\begin{rmk} 
\label{rmk-cond-char} Recall that $\chi \in X^*(T)$ is {\em minuscule} if $\langle \chi, \alpha^{\vee} \rangle \in \{-1,0,1\}$ for all $\alpha \in \Phi$. 
The notions introduced in \Def~\ref{def-orb-p-close} and the minuscule condition satisfy the following relations:
\[ 
\xymatrix@R=.1pc{
 & 
(\mbox{quasi-constant}) 
\ar@{}[rd]|-*[@]{\Longrightarrow} &
 \\ 
(\mbox{minuscule}) 
\ar@{}[ur]|-*[@]{\Longrightarrow}
\ar@{}[dr]|-*[@]{\Longrightarrow}
& & 
\left(\parbox{2.5cm}{\centering orbitally $p$-close \\ for all $p$ } \right)
\\ & 
\left(\parbox{1.5cm}{\centering $p$-small \\ for all $p$ } \right)
\ar@{}[ur]|-*[@]{\Longrightarrow} &
}\]
\end{rmk}
For more on quasi-constant (co)characters, including a general classification and applications, see  \cite{Goldring-Koskivirta-quasi-constant}.
\begin{definition}
\label{def-delta-reg} 
Let $\delta \in \rgeqz$. We say that $\chi \in X^*(T)$ is $\delta$-regular if $|\langle \chi, \alpha^{\vee} \rangle| >\delta$ for all $\alpha \in \Phi$.
\end{definition}
The case $\delta=0$ gives the usual notion of regularity.
\subsection{Hodge structures}
\label{sec-notn-hodge-structures}
Let $\SS:=\rescr \gmc$ be the Deligne torus. An $\RR$-Hodge structure is a morphism of $\RR$-algebraic groups $\SS \to GL(V_{\RR})$, where $V_{\RR}$ is an $\RR$-vector space. 
\subsubsection{Hodge bigrading convention}
\label{sec-notn-bigrading}
An $\RR$-Hodge structure $h: \SS \to GL(V_{\RR})$  induces a bigrading $V_{\CC} = \oplus_{a,b \in \ZZ}V^{a,b}$. Our convention is that of Deligne
\begin{comment}
Reference to Deligne checked
\end{comment}
\cite[1.1.6]{Deligne-Shimura-varieties}: $z \in \SS(\RR)$ acts on $V^{a,b}$ via $z^{-a}\bar z^{-b}$. In particular, a complex structure on $V_{\RR}$ is an $\RR$-Hodge structure of type $\{(0,-1),(-1,0)\}$.
 \subsubsection{Associated cocharacter}
 \label{sec-notn-cocharacter}
 There is a natural isomorphism
$
\SS_\CC \simeq \prod_{\Gal(\CC/\RR)} \gmc.
$
Let $\mu_0 : \gmc \to \SS_\CC$ be the natural injection onto the factor corresponding to $\id \in \Gal(\CC/\RR)$. If $G$ is a connected, reductive $\RR$-group and $h: \SS \to G$ is a morphism of $\RR$-groups, the cocharacter $\mu \in X_*(G)$ associated to $h$ is $\mu:=h_{\CC} \circ \mu_0$.

\renewcommand{\thesubsection}{{\thesection.\arabic{subsection}}}
\renewcommand{\thesubsubsection}{\thesubsection.\arabic{subsubsection}}

\part{Group-theoretical Hasse invariants} \label{part-gp-hasse}

Part \ref{part-gp-hasse} is devoted to group-theoretical objects. Section \S\ref{stack morphisms} recalls basic facts about morphisms of quotient stacks, used throughout this part. In \S\ref{def zip datum}, we recall the definition of the stack of $G$-Zips and introduce the notion of cocharacter data. Those of Hodge-type are singled out in \S\ref{subsection ZDHT}. In \S\ref{subsection strata GZip}, we recall the parametrization and properties of zip strata.

The topic of \S\ref{sec-zip-flags} is the stack of zip flags $\GF^\mu$, it is key for constructing Hasse invariants. The definition is given in \S\ref{sec-def-zip-flags}. We recall the definition and the properties of the `Schubert stack' in \S\ref{subsec-schubert}. It is used to define a stratification of $\GF^\mu$ in \S\ref{sec-stratification-GZipFlag}. Special strata termed `minimal' and `cominimal' are studied in \S\ref{sec-min-comin}.

We apply these tools in \S\ref{sec hasse invariants} to construct group-theoretical Hasse invariants. First, \S\ref{sec line bundles zip flags} studies line bundles on these various stacks. The main result of Part \ref{part-gp-hasse} is \Th~\ref{GTHI}, proved in \S\ref{subsec-group-HI}. We discuss functoriality and deduce \Th~\ref{discrete_fibers} in \S\ref{sec functor}. Finally, we introduce the cone of global sections in \S\ref{subsection global sec cone}; it will be used later in \S\ref{sec-increased-regularity}.

\section{The stack of $G$-zips} \label{sec-Gzips}
\subsection{Morphisms of quotient stacks} \label{stack morphisms}
Let $f:X\to Y$ be a morphism of schemes and let $G,H$ be group schemes such that $G$ acts on $X$ and $H$ acts on $Y$. Let $\alpha : G\times X \to H$ be a map of schemes satisfying
\begin{equation}
f(g\cdot x)=\alpha(g,x)\cdot f(x), \quad g\in G, \ x\in X\label{mos1}
\end{equation}
and the cocycle condition
\begin{equation}
\alpha(gg',x)=\alpha(g,g'\cdot x)\alpha (g',x)  \quad g,g'\in G, \ x\in X.\label{mos2}
\end{equation}
Then $(f,\alpha)$ induces a morphism between the groupoids attached to $(G,X)$ and $(H,Y)$, which yields in turn a map of stacks $\tilde{f}:\left[G\backslash X \right]\to \left[H\backslash Y \right]$, such that the following diagram commutes:
$$\xymatrix{
X \ar[r]^f \ar[d] &  Y \ar[d] \\
\left[ G\backslash X \right]\ar[r]_{\tilde{f}} & \left[ H\backslash Y \right]
}$$
where the vertical maps are the natural projections.

\subsection{The stack of $G$-zips} \label{def zip datum}

\subsubsection{Cocharacter datum} \label{cochardata}
We denote by $k$ an algebraic closure of $\fp$. A \emph{cocharacter datum} is a pair $(G,\mu)$ consisting of a connected reductive $\fp$-group $G$ and a cocharacter $\mu:\GG_{m,k}\to G_k$. We will denote by $\varphi :G\to G$ the Frobenius homomorphism.
\begin{definition}\label{defmorphchardata}
Let $(G_1,\mu_1)$ and $(G_2,\mu_2)$ be two cocharacter data. A morphism $f:(G_1,\mu_1)\to (G_2,\mu_2)$ is a morphism of groups $f:G_1\to G_2$ defined over $\fp$, such that $\mu_2=f\circ \mu_1$.
\end{definition}
We say that $f$ is an embedding (resp. finite) if the underlying map $f:G_1\to G_2$ is an embedding (resp. finite). Cocharacter data form a category.

A cocharacter datum $(G,\mu)$ gives rise to a pair of opposite parabolics $(P^-, P^+)$ in $G_k$ and a Levi subgroup $L:=P^-\cap P^+=\cent(\mu)$. The set $P^+(k)$ consists of those elements $g\in G(k)$ such that the limit 
\begin{equation}\label{limparab}
\lim_{t\to 0}\mu(t)g\mu(t)^{-1}
\end{equation}
exists, i.e such that the map $\GG_{m,k} \to G_{k}$,  $t\mapsto\mu(t)g\mu(t)^{-1}$ extends to a morphism of varieties $\AA_{k}^1\to G_{k}$. The Lie algebra of the parabolic $P^-$ (resp. $P^+$) is the sum of the non-positive (resp. non-negative) weight spaces of the action of $\GG_{m,k}$ on $\Lie(G)$ via $\mu$. We set $P:=P^-$, $Q:=\left( P^+ \right)^{(p)}$ and $M:= L^{(p)}$, so that $M$ is a Levi subgroup of $Q$. We denote by $U$ and $V$ the unipotent radicals of $P$ and $Q$, respectively. For a $k$-scheme $S$, one defines:

\begin{definition}[{\cite[\Def~1.4]{PinkWedhornZiegler-F-Zips-additional-structure}}]
\label{def-G-zip}
A $G$-zip of type $\mu$ over $S$ is a tuple $\underline{I}=(I,I_P,I_Q,\iota)$ where $I$ is a $G$-torsor over $S$, $I_P\subset I$ is a $P$-torsor, $I_Q\subset I$ is a $Q$-torsor, and $\iota: (I_P)^{(p)}/U^{(p)} \to I_Q/V$ an isomorphism of $M$-torsors.
\end{definition}

The category of $G$-zips over $S$ is denoted by $\GZip^{\mu}(S)$. This gives rise to a fibered category $\GZip^{\mu}$ over the category of $k$-schemes, which is a smooth algebraic stack of dimension 0 (\cite[\Th~1.5]{PinkWedhornZiegler-F-Zips-additional-structure}).

\subsubsection{Representation as a quotient stack}
The Frobenius restricts to a map $\varphi:L\to M$. The isomorphisms $L\simeq P/U$ and $M\simeq Q/V$ yield natural maps $P\to L$ and $Q\to M$ which we denote by $x\mapsto \overline{x}$. We define the zip group $E$ as:
\begin{equation}\label{zipgroup}
E:=\{(a,b)\in P\times Q \ | \ \varphi(\overline{a})=\overline{b}\}.
\end{equation}
The group $G\times G$ acts on $G$ by the rule $(a,b)\cdot g:=agb^{-1}$. By restriction, the groups $P\times Q$ and $E$ also act on $G$. By \loccit \Th~1.5, there is an isomorphism of stacks:
\begin{equation}\label{isomGE}
\GZip^{\mu}\simeq \left[E\backslash G \right].
\end{equation}
The association $(G,\mu)\mapsto \GZip^\mu$ is functorial : Let $f:(G_1,\mu_1)\to (G_2,\mu_2)$ be a morphism of cocharacter data and denote by $E_1$ and $E_2$ the associated zip groups. Using \eqref{limparab}, one sees that $f$ naturally induces a morphism of stacks $\left[E_1 \backslash G_1 \right] \to \left[E_2 \backslash G_2 \right]$, and hence a morphism $\GoneZip^{\mu_1}\to \GtwoZip^{\mu_2}$. 

\subsubsection{Conjugation}
For a cocharacter datum $(G,\mu)$ and $g\in G(k)$, the pair $(G,{}^g\mu)$ is again a cocharacter datum. Write $P',Q',E'$ for the groups attached to $(G,{}^g\mu)$ by \S\ref{cochardata}. One has $P'={}^gP:=gPg^{-1}$ and $Q'={}^{\varphi(g)}Q$. The maps $f: G\to G, \ x\mapsto gx \varphi(g)^{-1}$ and $\alpha : E \to E', \ (a,b)\mapsto \left(gag^{-1},\varphi(g)b\varphi(g)^{-1}\right)$ yield (\S\ref{stack morphisms}) an isomorphism:
\begin{equation}\label{conjisom}
\GZip^\mu \simeq \GZip^{{}^g \mu}.
\end{equation}

It will be convenient to make the following assumption:
\begin{Assumption}\label{AssumptionBorel}
There exists a Borel pair $(B,T)$ in $G$ defined over $\fp$ such that $B\subset P$.
\end{Assumption}

If $(G,\mu)$ is an arbitrary cocharacter datum, we can find $g\in G(k)$ such that $(G,{}^g\mu)$ satisfies Assumption \ref{AssumptionBorel}. By \eqref{conjisom}, it is harmless to assume that it is satisfied.

\subsection{Cocharacter data of Hodge type} \label{subsection ZDHT}

Let $(W,\psi)$ be a non-degenerate symplectic space over $\fp$ and $GSp(W,\psi)$ the symplectic group of $(W,\psi)$. Consider a decomposition $\Dcal$ : $W\otimes k=W_+ \oplus W_-$ where $W_+$ and $W_-$ are maximal isotropic subspaces. Define a cocharacter $\mu_\Dcal :\GG_{m,k}\to GSp(W,\psi)$ by letting $x\in \GG_{m,k}$ act trivially on $W_-$ and by multiplication by $x$ on $W_+$. The groups attached to the cocharacter datum $(GSp(W,\psi),\mu_\Dcal)$ are
$$P_\Dcal^+:=\stab_{GSp(W,\psi)}(W_+), \quad P_\Dcal=P^{-}_\Dcal:=\stab_{GSp(W,\psi)}(W_-), \quad Q_\Dcal:=\left(P^{+}_\Dcal\right)^{(p)},\quad L_\Dcal:=P^+_\Dcal \cap P^{-}_\Dcal,\quad M_\Dcal:=\left(L_\Dcal \right)^{(p)}.$$
Since $GSp(W,\psi)$ acts transitively on the set of all decompositions $\Dcal$ of $W$ into maximal isotropic subspaces, all cocharacter data obtained in this way are conjugate. We call $(GSp(W,\psi),\mu_\Dcal)$ a Siegel-type cocharacter datum. Define the \emph{Hodge character} $\eta_\omega:L_\Dcal \to \GG_m$ by $g\mapsto \det(g|W_+)^{-1}$.

\begin{definition} 
\label{defHodge} 
Let $(G,\mu)$ be a cocharacter datum.
\begin{enumerate}[label=(\alph*)]
\item \label{item-def-Hodgetype} We say that $(G,\mu)$ is of Hodge-type if there exists a Siegel-type cocharacter datum $(GSp(W,\psi),\mu_\Dcal)$ and an embedding $\iota : (G,\mu)\to (GSp(W,\psi),\mu_\Dcal)$.
\item \label{item-def-omegazip} Given an embedding $\iota$ as in \ref{item-def-Hodgetype}, we denote again by $\eta_\omega$ the restriction of $\eta_\omega$ to $L$, and call it the Hodge character of $(G, \mu)$ relative to $\iota$.
\end{enumerate}
\end{definition}
{\em A posteriori}, at least when $\iota$ arises from an embedding of Shimura data, the dependence of $\eta_{\omega}$ on $\iota$ is very minimal, see \Rmk~\ref{rmk-dependence-varphi}.
\subsection{Stratification} \label{subsection strata GZip}
Let $(G,\mu)$ be a cocharacter datum, and let $P,Q,L,M,E$ be the attached groups, as defined in \S\ref{def zip datum}. We assume that there exists a Borel pair $(B,T)$ satisfying Assumption \ref{AssumptionBorel}. Denote by $I,J\subset \Delta$ the type of $P,Q$ respectively (as defined in \S\ref{sec-notation-weyl}).

For $w\in W$, choose a representative $\dot{w}\in N_G(T)$, such that $(w_1w_2)^\cdot = \dot{w}_1\dot{w}_2$ whenever $\ell(w_1 w_2)=\ell(w_1)+\ell(w_2)$ (this is possible by choosing a Chevalley system, see \cite{SGA3}, Exp. XXIII, \S6). Define $z:=w_{0}w_{0,J}$. Then one has:
\begin{equation}\label{eqBorel}
{}^z \! B \subset Q \quad \textrm{and} \quad
\varphi(B\cap L) = B\cap M= {}^z \! B\cap M. 
\end{equation}

Note that the triple $({}^z \! B,T,\dot{z}^{-1})$ is a frame\footnote{Note that contrary to \loccitn, we use the convention $B\subset P$. This choice seems more natural from the point of view of Hodge theory and applications to
characteristic 0. It slightly modifies the parametrization of the $E$-orbits, and other results are changed accordingly.}, as defined in \cite[\Def~3.6]{Pink-Wedhorn-Ziegler-zip-data}. For $w\in W$, define $G_w$ as the $E$-orbit of $\dot{w}\dot{z}^{-1}$. The $E$-orbits in $G$ form a stratification of $G$ by locally closed subsets. By \Th~7.5 and \Th~11.2 in \cite{Pink-Wedhorn-Ziegler-zip-data}, the map $w\mapsto G_w$ restricts to two bijections:
\begin{align}\label{param}
{}^I W &\to \{E \textrm{-orbits in }G\} \\
W^J &\to \{E \textrm{-orbits in }G\}
\end{align}
Furthermore, for $w\in {}^I W \cup W^J$, one has
\begin{equation}\label{dimzipstrata}
\dim(G_w)= \ell(w)+\dim(P).
\end{equation}

\section{The stack of zip flags} \label{sec-zip-flags}
\subsection{Definition}\label{sec-def-zip-flags}
Fix a cocharacter datum $(G,\mu)$ and a Borel pair $(B,T)$ satisfying Assumption \ref{AssumptionBorel}.

\begin{definition}
\label{def-G-zip-flag}
A $G$-zip flag of type $\mu$ over a $k$-scheme $S$ is a pair $\hat{I}=(\underline{I},J)$ where $\underline{I}=(I,I_P,I_Q,\iota)$ is a $G$-zip of type $\mu$ over $S$, and $J\subset I_P$ is a $B$-torsor.
\end{definition}

We denote by $\GF^{\mu}(S)$ the category of $G$-zip flags over $S$. By similar arguments as for $G$-zips, we obtain a stack $\GF^{\mu}$ over $k$, which we call the stack of $G$-zip flags of type $\mu$. This stack is (up to isomorphism) independent of the choice of the pair $(B,T)$. There is a natural projection
\begin{equation}
\pi:\GF^{\mu}\to \GZip^\mu.
\end{equation}
Define an action of $E\times B$ on $G\times P$ by
\begin{equation}
((a,b),c)\cdot (g,r):=(agb^{-1},arc^{-1})
\end{equation}
for all $(a,b)\in E$, $c\in B$ and $(g,r)\in G\times P$. The first projections $G\times P \to G$ and $E\times B\to E$ give rise to a map of quotient stacks $\left[(E\times B) \backslash (G\times P)\right] \to \left[E \backslash G\right]$ as explained in \S\ref{stack morphisms}, denote it again by $\pi$.

\begin{theorem}\label{comdiagGF}
There is a commutative diagram, where the vertical maps are isomorphisms:
$$\xymatrix@1@M=5pt@R=0.7cm@C=1.2cm{
\GF^\mu \ar[r]^\pi \ar[d]^\simeq & \GZip^\mu \ar[d]^{\simeq} \\
\left[(E\times B) \backslash (G\times P)\right] \ar[r]^-{\pi} & \left[E \backslash G\right].
}$$
\end{theorem}

\begin{proof}
The proof is similar to that of \cite[\Prop~3.11]{PinkWedhornZiegler-F-Zips-additional-structure}. Let $S$ be a scheme over $k$. To $(g,r)\in (G\times P)(S)$, we attach a "standard" $G$-zip flag $\hat{I}_{(g,r)}=(\underline{I}_{g}, J_{(g,r)})$ over $S$. Let $\underline{I}_g$ be the $G$-zip over $S$ attached to $g$ by Construction 3.4 in \loccit Define $J_{(g,r)}$ as the image of $B\times S \subset P\times S$ under left multiplication by $r$.

For elements $(g,r),(g',r')\in (G\times P)(S)$, define the transporter as
\begin{equation}
\Transp((g,r),(g',r')):=\{(\epsilon,b)\in (E\times  B)(S) \ | \ (\epsilon,b)\cdot (g,r)=(g',r')\}.
\end{equation}

This gives a category $\Ycal$ fibered in groupoids over the category of $k$-schemes, such that for any $k$-scheme $S$, $\Ycal(S)$ is the category whose objects are $(G\times P)(S)$ and for $(g,r), (g',r')\in (G\times P)(S)$, the morphisms $(g,r)\to (g',r')$ are given by $\Transp((g,r),(g',r'))$. This is a prestack, whose stackification is the quotient stack $ \left[(E\times B) \backslash (G\times P)\right]$  (\cite[3.4.3]{Laumon-Moret-champs}).

We claim that $(g,r)\mapsto \hat{I}_{(g,r)}$ is then a fully faithful functor $\Ycal(S) \to \GF^\mu(S)$, i.e. that there is a natural bijection between the set of morphisms $\phi:\hat{I}_{(g,r)}\to \hat{I}_{(g',r')}$ and $\Transp((g,r),(g'r'))$. A morphism $\phi:\hat{I}_{(g,r)}\to \hat{I}_{(g',r')}$ consists in particular of a morphism $\varphi:\underline{I}_g\to \underline{I}_{g'}$. By \cite[\Lem~3.10]{PinkWedhornZiegler-F-Zips-additional-structure}, one can attach to $\varphi$ a pair $\epsilon=(p_+,p_-)\in E(S)$, (using the notation of \loccitn). By compatibility, the map $J_{(g,r)}\to J_{(g',r')}$ must be induced by left multiplication by $p_+$. Since $J_{(g,r)}=r(B\times  S)$ and $J_{(g',r')}=r'(B\times S)$, we have $b:=r'^{-1}p_+r \in B(S)$. We obtain a map $\phi \mapsto (\epsilon , b)$, and it is easy to check that it is a bijection.

To show that we obtain an isomorphism $\GF^\mu \simeq \left[(E\times B) \backslash (G\times P)\right]$, it remains to prove that any $G$-zip flag is \'etale locally of the form $\hat{I}_{(g,r)}$. Let $\hat{I}=(\underline{I},J)$ be a $G$-zip flag over a $k$-scheme $S$, where $\underline{I}=(I,I_P,I_Q,\iota)$ is a $G$-zip. By \cite[\Lem~3.5]{PinkWedhornZiegler-F-Zips-additional-structure}, the $G$-zip $\underline{I}$ is \'etale locally of the form $\underline{I}_g$. Furthermore, we may choose an \'etale extension which trivializes the $B$-torsor $J$. Since $J\subset I_P$ by definition, there exists an element $r\in P(S)$ such that $J$ is the image of $B\times S$ under left multiplication by $r$. Thus $\underline{I}=\hat{I}_{(g,r)}$. This completes the proof of the isomorphism. From our construction, it is clear that the diagram commutes.
\end{proof}

Define a subgroup $E'\subset E$ by
\begin{equation}
E':=E\cap (B\times G)=\{(a,b)\in E \ | \ a\in B\}.
\end{equation}
It is easy to see that $E'\subset B\times {}^z \! B$. The map $G\to G\times P$, $g\mapsto (g,1)$ and the inclusion $E'\subset E$ induce (\S\ref{stack morphisms}) an isomorphism of quotient stacks
\begin{equation}\label{EprimeGF}
\left[E' \backslash G\right]\simeq \left[(E\times B) \backslash (G\times P)\right].
\end{equation}
Note that we can also view this stack as the quotient $[E\backslash \left(G\times \left( P/B \right) \right)]$, where $E$ acts on the scheme $G\times \left( P/B \right)$ by
\begin{equation}\label{Eactionzipflag}
(a,b)\cdot (g,rB):=(agb^{-1},arB)
\end{equation}
for all $(a,b)\in E$ and $rB\in P/B$.

\subsection{The Schubert stack}\label{subsec-schubert}
Let $B\times B$ act on $G$ by $(a,b)\cdot g:=agb^{-1}$ for $a,b\in B$ and $g\in G$. Define the Schubert stack as the quotient stack $\Sch:=\left[(B\times B) \backslash G \right]$. The group $G$ is the disjoint union of the locally closed Schubert cells
\begin{equation}
C_w:=B\dot{w}B, \quad w\in W.
\end{equation}
One has $\dim(C_w)=\dim(B)+\ell(w)$. Denote by $\overline{C}_w$ the Zariski closure of $C_w$ in $G$, endowed with the reduced structure. Note that $\overline{C}_w$ is normal by \cite[\Th~3]{Ramanan-Ramanathan-projective-normality}. For each $w\in W$, define substacks 
\begin{equation}
\Sch_w:=\left[(B\times B) \backslash C_w \right] \quad \textrm{and} \quad \overline{\Sch}_w:=\left[(B\times B) \backslash \overline{C}_w \right].
\end{equation}
For each pair of characters $(\lambda,\mu)\in X^*(T)\times X^*(T)$, we have a line bundle $\Lscr_{\Sch}(\lambda,\mu)$ on $\Sch$ (\S\ref{Gvarstacks}). For $w\in W$, let $E_w$ denote the set of roots $\alpha\in \Phi^+$ such that $ws_\alpha <w$ and $\ell(ws_\alpha) =\ell(w)-1$.

\begin{theorem}\label{brion}
Let $w \in W$. One has the following:
\begin{enumerate}[label=(\alph*)]
\item \label{brion1} $H^0\left(\Sch_w,\Lscr_{\Sch}(\lambda,\mu)\right)\neq 0\Longleftrightarrow \mu = -w^{-1} \lambda$.
\item \label{brion2} $\dim_k H^0\left(\Sch_w,\Lscr_{\Sch}(\lambda,-w^{-1} \lambda) \right)=1$.
\item \label{brion3} For any nonzero $f\in H^0\left(\Sch_w,\Lscr_{\Sch}(\lambda,-w^{-1} \lambda) \right)$ viewed as a rational function on $\overline{C}_w$, one has
\begin{equation}\label{briondiv}
\div(f)=-\sum_{\alpha \in E_w} \langle \lambda , w\alpha^\vee \rangle \overline{C}_{w s_\alpha}.
\end{equation}
\end{enumerate}
\end{theorem}
\begin{proof}
We prove part~\ref{brion1}. Assume $f\in H^0\left(\Sch_w,\Lscr_{\Sch}(\lambda,-w^{-1} \lambda) \right)$ is a nonzero element. Then $f$ identifies with a regular function $f:C_w\to \AA^1$ (necessarily non-vanishing) satisfying the relation
\begin{equation}
f(bxb')=\lambda(b) \mu(b')^{-1}f(x)
\end{equation}
for all $b,b'\in B$ and for all $x\in C_w$. In particular, for all $t\in T$, we have
\begin{equation}
f(\dot{w}t)=\mu(t)^{-1} f(\dot{w})=\lambda(\dot{w}t\dot{w}^{-1})f(\dot{w})
\end{equation}
and the result follows (note that $f(\dot{w})\neq 0$). Part~\ref{brion2} follows by \cite[\Prop 1.18]{Koskivirta-Wedhorn-Hasse}. Part~\ref{brion3} is Chevalley's formula, see for example \cite[\S1, p. 654]{brion-lakshmibai-monomial}. The minus sign in formula \eqref{briondiv} accounts for our convention of positivity of roots (\S\ref{sec-based-root-data}).
\end{proof}

% As you said Chevalley's formula is for simply connected groups, so the root expressions should be for the simply connected covering of the derived subgroup. This is perfect, because it is easy to see that for a simply connected group, the characters of a Levi corresponding to a special node in the Dynkin diagram admit a minuscule, ample basis. %

\subsection{Schubert stratification of $\GF^\mu$} \label{sec-stratification-GZipFlag}
\Th~\ref{comdiagGF} and \eqref{EprimeGF} give an isomorphism $\GF^\mu \simeq \left[ E' \backslash G \right]$. It follows from \eqref{eqBorel} that $E'\subset B \times {}^z \! B$, so we have a natural projection $\beta:\left[ E' \backslash G \right] \to \left[ (B \times  {}^z \!B)\backslash G \right]$. The map $G\to G$, $x\mapsto x\dot{z}$ induces an isomorphism of quotient stacks $\alpha_z:\left[(B\times {}^z\!B) \backslash G\right]\xrightarrow{\simeq} \left[(B\times B) \backslash G\right]=\Sch$. We obtain a smooth morphism of stacks
\begin{equation}
\psi:\GF^{\mu} \longrightarrow \Sch, \quad \psi:=\alpha_z \circ \beta.
\end{equation}

For $w\in W$, define the flag stratum $\Xcal_w$ by $\Xcal_w:=\psi^{-1}(\Sch_w)$, and the closed flag stratum $\overline{\Xcal}_w:=\psi^{-1}(\overline{\Sch}_w)$, endowed with the reduced structure. The flag strata $\Xcal_w$ are smooth and locally closed. For $w\in W$, define a corresponding locally closed subvariety of $G\times (P/B)$ by
\begin{equation}
H_w:=\{(g,hB)\in G\times (P/B) \ | \ \widetilde{\psi}(g,h) \in C_w \}\end{equation}
where $\widetilde{\psi}(g,h):=h^{-1}g\varphi(\overline{h})\dot{z}$. The action of $E$ on $G\times (P/B)$ defined by \eqref{Eactionzipflag} restricts to an $E$-action on $H_w$ and one has an identification $\Xcal_w=\left[E \backslash H_w \right]$.

\begin{lemma}\label{lemma flag strata} \
\begin{enumerate}[label=(\alph*)]
\item \label{flag strata 1} The closed flag strata are normal and irreducible.
\item \label{flag strata 2}  The closed flag strata coincide with the closures of the flag strata.
\item \label{flag strata 3}  For all $w\in W$, one has $\dim (H_w)=\ell (w)+\dim (P)$.
\end{enumerate}
\end{lemma}

\begin{proof}
Since $\overline{C}_w$ is normal and $\psi$ is smooth, $\overline{\Xcal}_w$ is normal. The smooth morphism $\widetilde{\psi} : G\times P\to G$ has all fibers isomorphic to $P$. Since $C_w$ is irreducible and $\widetilde{\psi}$ is open with irreducible fibers, $\widetilde{\psi}^{-1}(C_w)\subset G\times P$ is irreducible. Hence $H_w$ is irreducible. Finally, the last two assertions follow from the smoothness of $\psi$.
\end{proof}

\subsection{Minimal, cominimal strata}\label{sec-min-comin}
Denote by $\tilde{\pi}:G\times P/B \to G$ the first projection. The map $\tilde{\pi}$ is $E$-equivariant for the action of $E$ described in \eqref{Eactionzipflag}. We have a Cartesian square:

$$\xymatrix@1@M=5pt{
G\times (P/B) \ar[d] \ar[r]^-{\tilde{\pi}} & G \ar[d] \\
[E\backslash \left(G\times (P/B)\right)] \ar[r]^-{\pi} & [E\backslash G]}$$

We introduce the map $\tilde{\pi}$ to reduce some proofs to scheme theory. For $w\in W$, the irreducible locally closed subset $\tilde{\pi}(H_w)$ is $E$-stable, so it is a union of $E$-orbits in $G$.

\begin{lemma}
For all $w\in W$, $\tilde{\pi}(H_w)$ is the union of $E$-orbits intersecting $B\dot{w}\dot{z}^{-1}$.
\end{lemma}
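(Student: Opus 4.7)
The plan is to prove the equivalent set equality $\pi(H_w) = E \cdot (B^-_L \dot w \varphi(\dot w_1)^{-1})$. For the inclusion $\supseteq$, I would verify directly that for $c \in B^-_L$ the pair $(c\dot w\varphi(\dot w_1)^{-1}, 1)$ lies in $H_w$ (since $\psi$ of it equals $c\dot w \in C_w$), and then invoke the $E \times B^-_L$-stability of $H_w$ together with the $E$-equivariance of $\pi$ (which is immediate from the formula $((a,b),c)\cdot(g,l)=(agb^{-1},\overline{a}lc^{-1})$) to conclude that $\pi(H_w)$ is $E$-stable.

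The reverse inclusion is the substantive part. First I would absorb the $L$-twist: if $l^{-1} g \varphi(l) \in D_w := B^- \dot w B^- \varphi(\dot w_1)^{-1}$, then the pair $(l,\varphi(l))$ lies in $E$ and sends $l^{-1}g\varphi(l)$ back to $g$, so it suffices to prove $D_w \subseteq E \cdot (B^-_L \dot w \varphi(\dot w_1)^{-1})$.

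For this, given $g_0 = \beta_1 \dot w \beta_2 \varphi(\dot w_1)^{-1}$ with $\beta_1, \beta_2 \in B^-$, I plan to set $b := \varphi(\dot w_1) \beta_2^{-1} \varphi(\dot w_1)^{-1}$ and $a := \beta_1 c^{-1}$, for some $c \in B^-_L$ still to be chosen, so that the telescoping identity $a\,c\dot w\varphi(\dot w_1)^{-1}\,b^{-1} = g_0$ holds automatically. Using $\varphi(\dot w_1) = \dot w_{0,M}\dot w_0$ together with $\dot w_0 B^- \dot w_0^{-1} = B$ and $\dot w_{0,M} B \dot w_{0,M}^{-1} = B^-_M \cdot R_u Q$ (the same identities that underlie the proof of Proposition \ref{psi}), one obtains $b \in B^-_M \cdot R_u Q \subseteq Q$ with image $\overline{b} \in B^-_M$ under the projection $Q \to M$. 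Decomposing $\beta_1 = c_1 u_1$ via the factorization $B^- = B^-_L \cdot R_u P$ gives $\overline{a} = c_1 c^{-1}$, so the condition $(a,b) \in E$ boils down to the single equation $\varphi(c) = \overline{b}^{-1}\varphi(c_1)$ in $B^-_M$.

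The final step rests on Frobenius surjectivity: since $\varphi : B^-_L \to B^-_M$ is surjective on $k$-points (both are connected of finite type over the perfect field $k$), a solution $c \in B^-_L$ exists and completes the construction. The main technical hurdle is pinning down the conjugation identity $\varphi(\dot w_1) B^- \varphi(\dot w_1)^{-1} = B^-_M \cdot R_u Q$ and extracting from it that $\overline{b} \in B^-_M$; once this is in hand, the $E$-membership verification and the telescoping computation reduce to direct bookkeeping.
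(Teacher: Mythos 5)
Your proof is correct and takes essentially the same route as the paper: its two-line computation $B^-\dot w B^-\varphi(\dot w_1)^{-1}\sim B^-_L\dot w\varphi(\dot w_1)^{-1}B^-_M\sim B^-_L\dot w\varphi(\dot w_1)^{-1}$ rests on exactly your conjugation identity $\varphi(\dot w_1)B^-\varphi(\dot w_1)^{-1}=B^-_M R_uQ$ and the same absorption of $R_uP$, $R_uQ$ and $B^-_M$ by elements of $E$ (the last via surjectivity of $\varphi:B^-_L\to B^-_M$ on $k$-points). You have merely written the orbit-equivalence argument out elementwise by exhibiting the pair $(a,b)\in E$ explicitly.
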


\begin{proof}
It is clear that $\tilde{\pi}(H_w)$ is the union of $E$-orbits intersecting $C_w\dot{z}^{-1}$. One has $C_w\dot{z}^{-1}=B( \dot{w}\dot{z}^{-1}) {}^z\!B$, so any such $E$-orbit intersects $B\dot{w}\dot{z}^{-1}$ and conversely.
\end{proof}

\begin{definition}
We call ${}^I W$ (resp. $W^J$) the set of minimal (resp. cominimal) elements of $W$ \textnormal{(\S\ref{sec-notation-weyl})}.
\end{definition}

Similarly, the flag stratum $\Xcal_w$ is called minimal (resp. cominimal) if $w$ is minimal (resp. cominimal).
Note that the longest minimal (resp. cominimal) element is $w_{0,I}w_0$ (resp. $w_0w_{0,J}$). The identity element $e$ is both the shortest minimal and cominimal element.

\begin{proposition}
\label{prop-image-min-comin}
Assume $w \in W$ is either minimal or cominimal. Then:
\begin{enumerate}[label=(\alph*)]
\item \label{mincomin1} One has $\tilde{\pi}(H_w)=G_w$ and $\tilde{\pi}(\overline{H}_w)=\overline{G}_w$.
\item \label{mincomin2} The $E$-action on $H_w$ is transitive.
\item \label{mincomin3} The preimage of $G_w$ by the morphism $\tilde{\pi}:\overline{H}_w\to \overline{G}_w$ is exactly $H_w$.
\item \label{mincomin4} The map $\tilde{\pi}:H_w\to G_w$ is finite.
\end{enumerate}
\end{proposition}

\begin{proof}
The first part of \ref{mincomin1} follows from \cite[\Th~5.14]{Pink-Wedhorn-Ziegler-zip-data} for $w$ minimal and from \cite[\Th~11.3]{Pink-Wedhorn-Ziegler-zip-data} for $w$ cominimal (with appropriate modifications due to our choice of frame $({}^zB,T,\dot{z}^{-1})$). The second part follows from the properness of $\tilde{\pi}$. To show \ref{mincomin2}, note that $\dim(H_w)=\dim(G_w)$ (Lemma \ref{lemma flag strata}\ref{flag strata 3} and \eqref{dimzipstrata}). Since the map $\tilde{\pi}:H_w\to G_w$ is $E$-equivariant and $G_w$ is an $E$-orbit, all its fibers are isomorphic. In particular, it is quasi-finite. Hence $H_w$ contains finitely many $E$-orbits. But if $Z\subset H_w$ is an $E$-orbit, it must map surjectively onto $G_w$ (as $G_w$ is an $E$-orbit). We deduce $\dim(Z)\geq \dim(G_w)=\dim(H_w)$. Hence any $E$-orbit of $H_w$ has dimension $\dim(H_w)$, which clearly shows that $H_w$ is an $E$-orbit. To prove \ref{mincomin3}, assume there exists $y\in \overline{H}_w\setminus H_w$ such that $\tilde{\pi}(y)\in G^w$. Let $w'\in W$ such that $y\in H_{w'}$. Then $H_{w'}\subset \overline{H}_w$ and $\pi(\overline{H}_{w'})=\overline{G}_w$. But this is impossible since $\dim H_{w'} < \dim(H_w)=\dim(G_w)$. Finally, \ref{mincomin3} shows that $\pi:H_w\to G_w$ is proper and quasi-finite, so it is finite, which proves \ref{mincomin4}.
\end{proof}

\begin{rmk}
It is proved in \cite[\Prop~2.2.1(ii)]{Koskivirta-Normalization} that the map $\tilde{\pi}:H_w\to G_w$ is also \'{e}tale.
\end{rmk}

\begin{corollary} \label{cor unique minimal cominimal}
For all $E$-orbits $S\subset G$, there is a unique minimal stratum $H$ and a unique cominimal stratum $H'$ such that $\pi(H)=\pi(H')=S$.
\end{corollary}

\section{Hasse invariants} \label{sec hasse invariants}

\subsection{Line bundles} \label{sec line bundles zip flags}

\begin{enumerate}
\item Identify $X^*(E)=X^*(P)=X^*(L)$ via the first projection $E\to P$ and the inclusion $L\subset P$. By \S\ref{Gvarstacks}, a character $\lambda\in X^*(L)$ gives rise to a line bundle $\Vscr(\lambda)$ on the quotient stack $\GZip^\mu\simeq\left[E \backslash G\right]$.

\item Similarly, identify $X^*(E')=X^*(B)=X^*(T)$ using the first projection $E'\to B$ and the inclusion $T\subset B$. For a character $\lambda\in X^*(T)$, we obtain by \S\ref{Gvarstacks} a line bundle $\Lscr(\lambda)$ on $\GF^\mu\simeq [E'\backslash G]$. 
\end{enumerate}

We first describe the relation between the line bundles $\Lscr_{\Sch}(\lambda, \mu)$, $\Lscr(\lambda)$ (for $\lambda,\mu \in X^*(T)$) and $\Vscr(\lambda)$ (for $\lambda \in X^*(L)$) via the maps
$$\xymatrix@1{
\GF^{\mu} \ar[r]^-{\psi} \ar[d]_{\pi} & \Sch \\
\GZip^{\mu} &
}$$

\begin{lemma} \  \label{pullbacks}
\begin{enumerate}[label=(\alph*)]
\item \label{item-PB1} For all $\lambda\in X^*(L)$, one has $\pi^*\Vscr(\lambda)=\Lscr(\lambda)$.
\item \label{item-PB2} For $\lambda,\nu \in X^*(T)$, one has
\begin{equation} \label{pbpsi}
\psi^*\Lscr_{\Sch}(\lambda,\nu)=\Lscr(\lambda+p \ {}^\sigma(z\nu)),
\end{equation}
where $\sigma:k\to k$ denotes the inverse of the map $x\mapsto x^p$.
\end{enumerate}
\end{lemma}

\begin{proof}
Part \ref{item-PB1} is clear as $\pi$ identifies with the natural projection $[E' \backslash G] \to [E \backslash G]$. Hence the pullback of $\pi^*\Vscr(\lambda)$ is the line bundle attached to the restriction of $\lambda$ to $E'\subset E$, which is $\Lscr(\lambda)$. To show part \ref{item-PB2}, recall (\S \ref{sec-stratification-GZipFlag}) that $\psi$ identifies with the natural projection $[E' \backslash G] \to [B\times {}^zB \backslash G]$ followed by the isomorphism $\alpha_z: [B\times {}^zB \backslash G] \to [B\times B \backslash G]$. Hence $\psi^{*}\Lscr_{\Sch}(\lambda,\nu)$ is the line bundle attached to the restriction of $(\lambda, \nu)$ via the inclusion $$E' \subset B\times {}^zB \simeq B\times B.$$
This restriction is given by $(a,b)\mapsto \lambda(a)(z\nu)(b)$ for $(a,b)\in E'$. Since $(a,b)$ satisfies $\varphi(\overline{a})=\overline{b}$ (where $\overline{a}, \overline{b}\in T$ are the torus components of $a,b$), one has
$\lambda(a)(z\nu)(b)=\lambda(\overline{a}) ({}^\sigma(z\nu))^p(\overline{a})$. This shows $\psi^*\Lscr_{\Sch}(\lambda,\nu)=\Lscr(\lambda+p \ {}^\sigma(z\nu))$.

\end{proof}

In view of \Th~\ref{brion}\ref{brion1}, we now restrict ourselves to line bundles of the form $\Lscr_{\Sch}(\lambda, \nu)$ where $\nu= -w^{-1}\lambda$ for $w\in W$. For $w\in W$ and $r\geq 1$, define $w^{(0)}=e$ and by induction $w^{(r)}:={}^\sigma (w^{(r-1)}w)$ for all $r\geq 1$.

\begin{lemma} \ \label{lemmapower}
\begin{enumerate}[label=(\alph*)]
\item\label{item-power-formula} For all $r,s\geq 1$ and $w\in W$, one has ${}^{\sigma^s}(w^{(r)})w^{(s)}=w^{(r+s)}$.
\item\label{item-exists-r} The set $R:=\{r\geq 0 \ | \ w^{(r)}=e\}$ is a non-trivial submonoid of $\ZZ_{\geq 0}$.
\item \label{item-powerminus1} If $w^{(r)}=e$ for $r\geq 1$, then $w^{(r-1)}=w^{-1}$.
\end{enumerate}
\end{lemma}
\begin{proof}
The first part follows from an easy induction. Hence $R$ is stable under addition. Since $W$ is finite, there exists $r>s\geq 0$ such that $w^{(r)}=w^{(s)}$. By \ref{item-power-formula}, we have $w^{(r-s)}=e$. Finally, \ref{item-powerminus1} is clear from the definition.
\end{proof}

By Lemma \ref{pullbacks}\ref{item-PB2}, we have $\psi^*\Lscr_{\Sch}(\lambda, -w^{-1}\lambda)=\Lscr(\lambda - p \ {}^\sigma \! (zw^{-1}\lambda))$. In the next section, we will use \Th~\ref{brion} to produce sections of certain line bundles $\Lscr_{\Sch}(\lambda, -w^{-1}\lambda)$ and then pull back these sections along the map $\psi$. Hence, we need to understand the map 
\begin{equation} \label{Dwmap}
D_w: X^*(T) \to X^*(T), \quad \lambda \mapsto \lambda - p \ {}^\sigma \! (zw^{-1}\lambda).
\end{equation}

\begin{lemma}\label{lemmainv} Let $w\in W$.
\begin{enumerate}[label=(\alph*)]
\item \label{item-lemmainv1} The map $D_w$ induces a $\QQ$-linear automorphism of $X^*(T)_\QQ$.
\item \label{item-lemmainv2} The inverse of $D_w$ is given as follows: Let $\chi\in X^*(T)$. Fix $r\geq 1$ such that $(zw^{-1})^{(r)}=e$ and let $m\geq 1$ such that $\chi$ is defined over $\FF_{p^m}$. Then one has $D_w(\lambda)=\chi$ for the quasi-character
\begin{equation}\label{lambdapullback}
\lambda =-\frac{1}{p^{rm}-1} \sum_{i=0}^{rm-1} p^i (zw^{-1})^{(i)} ({}^{\sigma^i}\chi)
\end{equation}\label{formula pullback}
Furthermore, the summand corresponding to $i=rm-1$ is $wz^{-1}({}^{\sigma^{-1}}\chi)$.
\end{enumerate}
\end{lemma}

\begin{proof}
Assertion \ref{item-lemmainv1} is clear, as $D_w$ is the identity modulo $p$. To show \ref{item-lemmainv2}, let $\lambda \in X^*(T)_\QQ$ be the unique quasi-character such that $D_w(\lambda)=\chi$. Then one shows by induction on $j\geq 1$ that
\begin{equation}
 \sum_{i=0}^{j-1} p^i (zw^{-1})^{(i)} ({}^{\sigma^i}\chi) = \lambda- p^j (zw^{-1})^{(j)} ({}^{\sigma^j}\lambda)
\end{equation}
For $j=rm$, we have $(zw^{-1})^{(j)}=e$ and ${}^{\sigma^j}\lambda=\lambda$, and the result follows.
\end{proof}

%((\varphi(z)^{-1}w^{-1})^{(rm-1)} z^{-1}\chi)\circ \varphi^{rm-1}=(\varphi^{-1}(w)\chi)\circ \varphi^{rm-1}=w(\chi\circ \varphi^{rm-1}).

\subsection{Group-theoretical Hasse invariants} \label{subsec-group-HI}
By \cite[\Prop~1.18]{Koskivirta-Wedhorn-Hasse}, the space $H^0(\left[E\backslash G_w\right],\Vscr(\chi))$ is at most $1$-dimensional for all $\chi\in X^*(L)$ and all $w\in {}^I W$. Moreover,  \cite[\Th~3.1]{Koskivirta-compact-hodge} shows that there exists $N\geq 1$ such that for all $w\in {}^IW$ and all $\chi \in X^*(L)$, the space $H^0(\left[E\backslash G_w\right],\Vscr(N\chi))$ has dimension $1$ over $k$. Fix the following:
\begin{itemize}
\item $r\geq 1$ such that $w^{(r)}=e$ for all $w\in W$.
\item $m\geq 1$ such that $T$ splits over $\FF_{p^m}$.
\item $N\geq 1$ such that $\dim_k H^0(\left[ E\backslash G_w\right],\Vscr(N\chi))= 1$ for all $w\in {}^IW$ and $\chi \in X^*(L)$.
\item  For $w\in {}^I W$ and $\chi\in X^*(L)$, let $h_{w,\chi}$ be a nonzero element of the line $H^0(\left[ E\backslash G_w\right],\Vscr(N\chi))$.
\item Similarly, for all $w\in W$ and $\lambda \in X^*(T)$, let $f_{w,\lambda}$ be a nonzero element of the line $H^0(\Sch_w,\Lscr_{\Sch}(\lambda,-w^{-1}\lambda))$ (see \Th~\ref{brion}).
\item For $w\in W$ and $\lambda \in X^*(T)$, set $f'_{w,\lambda}:=\psi^*(f_{w,\lambda})\in H^0(\Xcal_w,\psi^*\Lscr_{\Sch})$.
\end{itemize}
 
\begin{proposition} \label{prop HI}
Let $w\in {}^I W\cup W^J$ and $\chi \in X^*(L)$. The following assertions are equivalent:
\begin{enumerate}[label=(\alph*)]
\item \label{item HIzip} There exists $d\geq 1$ such that $h_{w,\chi}^d$ extends to $\left[ E\backslash \overline{G}_w\right]$ with non-vanishing locus $\left[ E\backslash G_w\right]$.
%\item\label{item HIflag}  The section $f'_{w,\chi}$ extends to $\overline{\Xcal}_w$ with non-vanishing locus $\Xcal_w$.
\item\label{item HIformula}  For all $\alpha \in E_w$, one has:
\begin{equation}\label{eqpropHI}
\sum_{i=0}^{rm-1} \langle(zw^{-1})^{(i)}({}^{\sigma^i}\chi),w\alpha^\vee\rangle p^i >0.
\end{equation}
\end{enumerate}
\end{proposition}

\begin{proof}
Let $\lambda\in X^*(T)_\QQ$ given by \eqref{lambdapullback} of Lemma \ref{lemmainv}. Put $C:=p^{rm}-1$, so that $C\lambda \in X^*(T)$. It satisfies $\psi^*(\Lscr_{\Sch}(C\lambda,-w^{-1}C\lambda))=\Lscr(C\chi)$, so we have $f'_{w,C\lambda}\in H^0(\Xcal_w,\Lscr(C\chi))$. Equation (\ref{eqpropHI}) is equivalent to $\langle \lambda, w\alpha^\vee \rangle >0$. Hence by \Th~\ref{brion}~\ref{brion3}, Property \ref{item HIformula} is satisfied if and only if $f_{w,C\lambda}$ extends to $\overline{\Sch}_w$ with non-vanishing locus $\Sch_w$. Equivalently, it holds if and only if $f'_{w,C\lambda}$ extends to $\overline{\Xcal}_w$ with non-vanishing locus $\Xcal_w$ (by smoothness of $\psi$). By \Prop~\ref{prop-image-min-comin}~\ref{mincomin1} combined with \cite[\Prop~ 1.18]{Koskivirta-Wedhorn-Hasse}, we have $\dim_k H^0(\Xcal_w,\Lscr(dC\chi)) \leq 1$, so $\pi^*(h_{w,\chi})^{dC}=(f'_{w,C\lambda})^{d}$ up to a nonzero scalar. Since $\overline{\Xcal}_w$ is normal, $f'_{w,C\lambda}$ extends to $\overline{\Xcal}_w$ with non-vanishing locus $\Xcal_w$ if and only if some power of it does, so we deduce that \ref{item HIformula} is equivalent to the fact that $\pi^*(h_{w,\chi})$ extends to $\overline{\Xcal}_w$ with non-vanishing locus $\Xcal_w$. In particular, \ref{item HIzip} implies \ref{item HIformula}. The converse follows from the next lemma.
\end{proof}

\begin{lemma}\label{descent lemma}
Let $f:X\to Y$ a proper surjective morphism of integral schemes of finite-type over $k$. Let $\Lscr$ be a line bundle on $Y$. Let $U\subset Y$ be a normal open subset and $h\in \Lscr(U)$ a non-vanishing section over $U$. Assume that the section $f^*(h)\in H^0(f^{-1}(U),f^*\Lscr)$ extends to $X$ with non-vanishing locus $f^{-1}(U)$. Then there exists $d \geq 1$ such that $h^d$ extends to $Y$, with non-vanishing locus $U$.
\end{lemma}

\begin{proof}
We will reduce to the case when $X,Y$ are affine, $\Lscr=\Ocal_Y$ and $f$ is the normalization of $Y$.

First, after replacing $X$ by its normalization $\widetilde{X}\to X$, we may assume that $X$ is normal. In this case, the map $f$ factors through the normalization $\pi:\widetilde{Y}\to Y$. Hence there exists $f':X\to \widetilde{Y}$ such that $f=\pi \circ f'$. The map $f'$ is again proper, so it is surjective. Write $\div(\pi^*(h))=\sum_{i=1}^r n_iZ_i$ where $n_i\in \ZZ$ and $Z_i\subset \widetilde{Y}\setminus\pi^{-1}(U)$ are codimension one irreducible subvarieties. If $n_i<0$ for some $i$, then $f^*(h)$ would have a pole (because $f'$ is surjective), hence $\pi^*(h)$ extends to $\widetilde{Y}$ with non-vanishing locus $\pi^{-1}(U)$. Thus we may assume $X=\widetilde{Y}$. Also, we may reduce to the case $Y=\spec(A)$, $X=\spec(B)$ for an integral domain $A$ and $B$ its integral closure, and $\Lscr=\Ocal_Y$. Write $I\subset A$ for the ideal sheaf of $Z:= X\setminus U$, endowed with the reduced structure. Hence
$U=D(I)=\{\pfr\in \spec A, I\nsubseteq \pfr\}$.
Replacing $h$ by a power, we may further assume that $h\in IB$.

We claim that for any $s\in IB$, there exists $n\geq 1$ such that $s^{p^n}$ lies in $A$. Since $k$ has characteristic $p$, we may assume that $s=gx$ for some $g\in I$ and $x\in B$, because the $p^n$ power map is additive. Since $U$ is normal, $f^{-1}(U)\to U$ is an isomorphism, so the map $A_g\to B_g$ is an isomorphism (since $D(g)\subset U$). Hence we can find $m\geq 1$ such that $g^m x \in A$. Since $A[x]$ is generated as an $A$-module by $1,x,...,x^r$  for some $r\geq 1$, it follows that we can find $m$ such that $g^m x^d\in A$ for all $d\geq 0$. Increasing $m$, we may assume that it is a power of $p$, say $m=p^n$. Taking $d=m$ gives $g^m x^m \in A$, which proves the claim. We have showed that there exists $d\geq 1$ such that $h^d\in A$. If $V\subset Y$ is the non-vanishing locus of $h^d$, then $f^{-1}(V)=f^{-1}(U)$, hence $U=V$. 
\end{proof}

Now we come to the main theorem of this part, the existence of group-theoretical Hasse invariants (\Th~\ref{th-intro-gp-hasse}):

\begin{theorem}\label{GTHI}
If $\chi\in X^*(L)$ is $(p,L)$-admissible \textnormal{(\Def~\ref{def-orb-p-close}\ref{item-def-p-adm})}, then there exists $d\geq 1$ such that for all $w\in W^J$, the section $h_{w,\chi}^d$ extends to $\left[ E\backslash \overline{G}_w\right]$ with non-vanishing locus $\left[ E\backslash G_w\right]$.
\end{theorem}

\begin{proof}
We show Property \ref{item HIformula} of \Prop~\ref{prop HI}. This expression has the form $\sum_{i=0}^{rm-1}\langle \chi, w_i \alpha^\vee \rangle p^i$ for some elements $w_i\in W$. Using the inequality
$$\sum_{i=0}^{rm-2}(p-1)p^i=p^{rm-1}-1 < p^{rm-1}$$
and the fact that $\chi$ is orbitally $p$-close, it suffices to check that the leading term $p^{rm-1}\langle \chi, w_{rm-1} \alpha^\vee \rangle$ is positive. By the second part of Lemma \ref{lemmainv}, this term is
$$p^{rm-1}\langle wz^{-1} ({}^{\sigma^{-1}}\chi), w\alpha^\vee \rangle=p^{rm-1}\langle \chi, {}^\sigma(z\alpha^\vee) \rangle.$$
Since $w\in {W^J}$ and $\alpha\in E_w$, we must have $s_\alpha \notin W_J$ by definition of $W^J$. The parabolic subgroup ${}^{z^{-1}}Q$ contains $B$ and has type $J$ and Levi subgroup ${}^{z^{-1}}M$. It follows that $\alpha$ is not a root of ${}^{z^{-1}}M$. By \eqref{eq-z-roots2}, $w_{0,J}\alpha$ is a positive root, hence $z\alpha=w_0 w_{0,J}\alpha$ is negative. We deduce $z\alpha \in \Phi^-\setminus \Phi(M,T)$, and finally
\begin{equation}
{}^\sigma(z\alpha)\in \Phi^-\setminus \Phi(L,T)
\end{equation}
Since $\chi$ is ample, we obtain $\langle \chi, {}^\sigma(z\alpha^\vee) \rangle>0$, which terminates the proof.
\end{proof}

\begin{rmk}
When $G_w$ is the open stratum of $G$, \Th~\ref{GTHI} was proved in \cite{Koskivirta-Wedhorn-Hasse} for $\chi$ any $L$-ample character. The assumption that $\chi$ is orbitally $p$-close is superfluous in this case.
\end{rmk}

\begin{lemma}
Let $(G,\mu)=(GSp(W,\psi),\mu_\Dcal)$ be a cocharacter datum of Siegel-type \textnormal{(\S \ref{subsection ZDHT})}. The Hodge character $\eta_\omega$ is $L$-ample and quasi-constant. In particular, $\eta_\omega$ is $(p,L)$-admissible for all $p$ \textnormal{(\Def~\ref{def-orb-p-close})}.
\end{lemma}

\begin{proof}
We prove first that $\eta_\omega$ is $L$-ample. Choose a Borel subgroup $B\subset P_\Dcal$ and a maximal torus $T\subset B$. Since $P_\Dcal$ is a maximal parabolic subgroup, the set $\Delta \setminus I$ consists of a single simple root $\alpha$. It is then easy to see from the definition of $\eta_\omega$ that $\langle \eta_\omega, \alpha^\vee\rangle =-1$, which shows that $\eta_\omega$ is $L$-ample.

We now show that $\eta_\omega$ is quasi-constant. For any positive root $\beta$ not in $\Phi(L_\Dcal,T)$, one has $|\langle \eta_\omega, \beta^\vee \rangle|=1$ if $\beta$ is a long root, and $|\langle \eta_\omega, \beta^\vee \rangle|=2$ if $\beta$ is a short root. Since $W\rtimes \Gal(k/\FF_p)$ preserves the length of roots, we deduce that $\orb(\eta_\omega,\beta^\vee)=\{0,1\}$ in all cases, which proves the result.
\end{proof}

\begin{corollary}\label{cor-Siegel-zip-Hasse}
Let $(G,\mu)=(GSp(W,\psi),\mu_\Dcal)$ be a cocharacter datum of Siegel-type. There exists $d\geq 1$ such that for all $w\in W^J$, there exists a section $h_w \in H^0(\left[E\backslash \overline{G}_w\right], \omega^d)$ with non-vanishing locus $\left[ E\backslash G_w\right]$.
\end{corollary}

We will explain later in  \S\ref{sec-hasse-shimura} that $\eta_\omega$ is $(p,L)$-admissible for all character data $(G,\mu)$ of Hodge-type coming from a Shimura datum of Hodge-type. In \cite{Goldring-Koskivirta-zip-flags}, we have shown the analogue of \Cor~\ref{cor-Siegel-zip-Hasse} for any $(G,\mu)$ of Hodge-type (not necessarily attached to a Shimura datum), and also for more general $(G,\mu)$ (those of maximal type).

\subsection{Functoriality of zip strata}\label{sec functor}
Let $f:(G_1,\mu_1)\to (G_2,\mu_2)$ be a finite morphism of cocharacter data and $\widetilde{f}:\GoneZip^{\mu_1}\to \GtwoZip^{\mu_2}$ the induced map of stacks. The underlying topological spaces of $\GoneZip^{\mu_1}$ and $\GtwoZip^{\mu_2}$ are finite, and the map $\widetilde{f}$ is continuous.

We denote by $P_1,L_1,E_1$ (resp. $P_2,L_2,E_2$) the subgroups attached to $\mu_1$ (resp. $\mu_2$), as in \S\ref{cochardata}. Let $f^*:X^*(L_2)\to X^*(L_1)$ denote the induced morphism.

\begin{theorem}[Discrete Fibers] \label{discrete_fibers}
Assume there exists a $(p,L_2)$-admissible character $\chi\in X^*(L_2)$, such that $f^*\chi$ is orbitally $p$-close. Then the map $\widetilde{f}$ has discrete fibers on the underlying topological spaces.
\end{theorem}

\begin{proof}
We claim first that $f:G_1\to G_2$ induces a finite morphism $G_1/P_1\to G_2/P_2$. It suffices to show that this morphism is quasi-finite. By definition, one has $P_1\subset f^{-1}(P_2)$, so it follows that $H:=f^{-1}(P_2)_{\rm red}$ is a parabolic subgroup of $G_1$ containing $P_1$. Using \eqref{limparab}, we find also $R_u(P_1)\subset f^{-1}(R_u(P_2))$ hence $R_u(P_1)\subset f^{-1}(R_u(P_2))_{\rm red}=R_u(H)$. It follows that $H=P_1$, so the map $G_1/P_1\to G_2/P_2$ is injective on $k$-points.

In particular, \Rmk~\ref{rmk-ample} shows that the $L_2$-ample character $\chi$ restricts to an $L_1$-ample character of $X^*(L_1)$ which we denote again by $\chi$. Hence we may apply \Th~\ref{GTHI} to $\chi$ on both stacks $\GoneZip^{\mu_1}$ and $\GtwoZip^{\mu_2}$. If the statement were false, there would exist two $E_1$-orbits $C_1$ and $C'_1$ in $G_1$ such that $C'_1\subset \overline{C}_1$ and $f(C_1)\subset C_2$, $f(C'_1)\subset C_2$ for some $E_2$-orbit $C_2\subset G_2$. There exists an integer $N\geq 1$ and a section $h_i\in H^0(\overline{C}_i, \Vscr(\chi)^{ N})$ (for $i=1,2$), whose non-vanishing locus is $C_i$. Since $\dim_k H^0(C_1, \Vscr(\chi)^{ N})=1$, one has $f^*(h_2)=h_1$ (up to nonzero scalar). This contradicts the fact that the non-vanishing locus of $h_1$ is $C_1$.
\end{proof}

We will see that \Th~\ref{discrete_fibers} applies in the context of embeddings of Shimura data of Hodge-type (\Cor~\ref{disc-fib-Shimura}). As explained in \Rmk~\ref{rmk-intro-discrete-fibers} of the introduction, the statement remains true without the cumbersome assumption that there exists
a $(p,L_2)$-admissible character $\chi\in X^*(L_2)$, such that $f^*\chi$ is orbitally $p$-close. This is proved in \cite[\Th~2]{Goldring-Koskivirta-zip-flags}. The trick is to replace the Frobenius $\varphi$ by higher powers of $\varphi$.

\subsection{The cone of global sections} \label{subsection global sec cone}
We will construct nonzero global sections for certain line bundles $\Lscr(\lambda)$. Actually, not all line bundles $\Lscr(\lambda)$ admit nonzero global sections. In general, it is hard to characterize the set of $\lambda \in X^*(T)$ which do.

In this section, $(G,\mu)$ is a cocharacter datum of Hodge-type. Recall that we defined a map $D_w:X^*(T)\to X^*(T)$ in \eqref{Dwmap}. Here, we consider the special case $w=w_0$. Hence, $D_{w_0}$ is defined by
\begin{equation}
\label{eq-def-map-cone}
D_{w_0}:X^*(T)\to X^*(T), \quad \lambda\mapsto \lambda-p ({}^{\sigma}(zw_0\lambda)).
\end{equation}
By \Th~\ref{brion}\ref{brion1}, the line bundles of the form $\Lscr_{\Sch}(\lambda,-w_0\lambda)$ are the ones admitting nonzero sections on the open stratum $\Sch_{w_0}\subset \Sch$. Among these line bundles, those admitting nonzero sections on $\Sch$ are given by the following lemma:

\begin{lemma}
One has $$H^0(\Sch, \Lscr_{\Sch}(\lambda,-w_0\lambda))\neq 0 \Longleftrightarrow -\lambda \in X_+^*(T).$$
\end{lemma}
\begin{proof}

By \Th~\ref{brion}\ref{brion3}, the divisor of any nonzero element $f\in H^0(\Sch_{w_0}, \Lscr_{\Sch}(\lambda,-w_0\lambda))$ (viewed as a rational function on $G$) is a sum of $B\times B$-orbits in $G$ with multiplicities $\langle \lambda, w_0 \alpha^\vee \rangle$ with $\alpha \in E_{w_0}$. One has $E_{w_0}=\Delta$, hence $f$ extends to $G$ if and only if $\langle \lambda, w_0 \alpha^\vee \rangle \geq 0$ for all positive roots $\alpha$. Since $w_0$ maps bijectively the positive roots to the negative ones, this is equivalent to $-\lambda \in X^*_+(T)$.
\end{proof}

This justifies the introduction of the following subset: Define $\Ccal_{w_0}\subset X^*(T)$ by:
\begin{equation}
\Ccal_{w_0}:=\{D_{w_0}(\lambda) \ | \ -\lambda \in X^*_+(T)\}.
\end{equation}

By Lemma \ref{pullbacks}\ref{item-PB2}, one has
$$\psi^*(\Lscr_{\Sch}(\lambda,-w_0\lambda))=\Lscr(D_{w_0}(\lambda)).$$
For any $\lambda \in -X_+^*(T)$, the pull back by $\psi:\GF^\mu \to \Sch$ of a nonzero global section of $\Lscr_{\Sch}(\lambda,-w_0\lambda)$ over $\Sch$ gives a nonzero global section of $\Lscr(D_{w_0}(\lambda))$ over $\GF^\mu$. Hence any $\chi \in \Ccal_{w_0}$ satisfies that $\Lscr(\chi)$ admits a nonzero global section.

\begin{lemma}
The subset $\Ccal_{w_0}\subset X^*(T)$ is a cone of maximal rank (i.e $\Span_\QQ(\Ccal_{w_0})=X^*(T)_\QQ$).
\end{lemma}

\begin{proof}
Since $D_{w_0}$ is linear and $-X^*_+(T)$ is a cone, it is clear that $\Ccal_{w_0}$ is a cone. It is of maximal rank because $X_{+}^*(T)$ is of maximal rank and $D_{w_0}$ induces an automorphism of $X^*(T)_\QQ$ (Lemma \ref{lemmainv}\ref{item-lemmainv1}).
\end{proof}

We assumed that the cocharacter datum $(G,\mu)$ is of Hodge-type. Choose a symplectic embedding $\iota:(G,\mu)\to (GSp(W,\psi),\mu_\Dcal)$. This embedding yields a Hodge character $\eta_\omega \in X^*(L)$ (\Def~\ref{defHodge}\ref{item-def-omegazip}). Define a second cone $\Ccal \subset X^*(T)$ as follows: First, fix an integer $N\geq 1$ such that $H^0([E \backslash G_{w_0}],\omega^N)\neq 0$ (as we did in \S \ref{subsec-group-HI}). Then put:
\begin{equation}
\label{eq-flag-cone}
\Ccal := \NN (N\eta_\omega) + \Ccal_{w_0}
\end{equation}
the cone generated by $N\eta_\omega$ and $\Ccal_{w_0}$. We introduce this cone because of the following proposition:

\begin{proposition}
For any $\chi \in \Ccal$, there exists a nonzero global section $h_\chi \in H^0(\GF^{\mu},\Lscr(\chi))$.
\end{proposition}

\begin{proof}
We already showed this for $\chi \in \Ccal_{w_0}$. Hence it suffices to show it for $N\eta_\omega$, i.e that $\omega^N$ admits a nonzero global section. The embedding $\iota$ induces a map of stacks $\GZip^\mu \to \GspZip^{\mu_\Dcal}$. The image of the unique open stratum of $\GZip^\mu$ maps to some stratum $S\subset \GspZip^{\mu_\Dcal}$ (not necessarily open). In particular, the image of $\GZip^\mu$ is contained in $\overline{S}$, by continuity.

By \Cor~\ref{cor-Siegel-zip-Hasse}, there exists a section $h_S\in H^0(\overline{S}, \omega^d)$ (some $d\geq 1$) with non-vanishing locus $S$. The pull-back of $h_S$ to $\GZip^\mu$ is then nonzero. This gives a nonzero global section $h$ of $\omega^d$ over $\GZip^\mu$. Now we claim that $\omega^N$ also admits a section. For this, let $f\in H^0([E \backslash G_{w_0}],\omega^N)\neq 0$ be nonzero. Then $f^d$ and $h^N$ are both sections of $\omega^{Nd}$ over the open stratum $[E \backslash G_{w_0}]$, hence coincide up to a nonzero scalar, as $\dim_k H^0([E \backslash G_{w_0}], \omega^{Nd})=1$. We deduce that $f^d$ (viewed as a rational function on $G$) extends to $G$. Since $G$ is normal, $f$ extends to $G$, which shows that $\omega^N$ also has a nonzero global section over $\GZip^\mu$, as claimed. This terminates the proof.
\end{proof}

If $G$ is split over $\FF_p$ then one has $N\eta_\omega \in \Ccal_{w_0}$, hence $\Ccal=\Ccal_{w_0}$, but we don't know if this remains true in general. To summarize, the above results provide nonzero global sections for $\Lscr(\chi)$ for $\chi$ in a cone $\Ccal \subset X^*(T)$ of maximal rank, which we tried to make as large as possible. This will be used later in \Th~\ref{th-reduction-to-h0}\ref{item-cone-maximal}.
%\begin{lemma}
%Assume $G$ is split over $\FF_p$. Then $\eta_\omega \in \Ccal_{w_0}$.
%\end{lemma}
%\begin{proof}
%Using the notations of Lemma \ref{lambdapullback}, we may take $r=2$ and $m=1$ for the element $w=w_0$. Formula \eqref{formula pullback} shows that $\psi^*(\lambda)=\eta_\omega$ with 
%\begin{equation}
%\lambda:=-\frac{1}{p^2-1}\left(p w_{0}\eta_\omega + w_0 w_{0,L} \eta_\omega \right).
%\end{equation}
%Hence it suffices to show that $p \langle \eta_\omega, \alpha^\vee\rangle+\langle \eta_\omega, w_{0,L}\alpha^\vee\rangle\leq 0$ for all $\alpha\in \Delta$. If $\alpha \in {}^z I$, then this expression is $0$ because $w_{0,L}$ stabilizes ${}^z I$ and $\langle \eta_\omega, \alpha^\vee\rangle=0$. If $\alpha \in \Delta \setminus {}^zI$, we have $\langle \eta_\omega, \alpha^\vee \rangle<0$. By theorem \ref{thm quasicst}, $\eta_\omega$ is quasi-constant, so the result follows.
%\end{proof}

\part{Strata Hasse invariants of Shimura varieties} 
\label{part-shimura}
In Part~\ref{part-shimura}, we apply the general results of Part~\ref{part-gp-hasse} on group-theoretical Hasse invariants to Hodge-type Shimura varieties and subschemes related to their EO stratification. 
\S\ref{sec-basic-objects-of-study} concerns Hodge-type Shimura varieties. 
\S\ref{sec-shimura-varieties-hodge-type} introduces notation for integral models of Hodge-type Shimura varieties, to be used throughout Parts~\ref{part-shimura}-\ref{part-galois}. 
\S\ref{sec-hasse-shimura} deduces the corollaries of \S\ref{sec-intro-gp-hasse} about Hasse invariants for the EO stratification. 

In \S\ref{sec-eo-compactification}, we study the extension of the EO stratification to compactifications. In \S\ref{sec-G-zip-extension}, we show that the universal $G$-Zip over $\Shko$ admits an extension to a $G$-Zip over a toroidal compactification $\Shktoro$. This is applied to the minimal compactification in \S\ref{sec-eo-minimal-cpt} where we prove the affineness statement of \Cor~\ref{cor-affine-noncpt}. The notion of length stratification is introduced in \S\ref{sec-length-strata} for a general scheme $X \to \GZip^{\mu}$. In \S\ref{sec-length-hodge-type}, the general considerations of \S\ref{sec-length-strata} are applied to $\Shktoro$. The length stratification will play a key role in \S\ref{sec-fact-hecke}, in the proof of the factorization theorem for Hecke algebras (\Th~\ref{th-intro-factor}, \Th~\ref{th-reduction-to-h0}).

The last two sections of Part~\ref{part-shimura} record auxiliary results that will be used in Part~\ref{part-galois}. In \S\ref{sec-vanish-strata}, we note how vanishing theorems for $\Shktor$ generalize to strata. \S\ref{sec-lifting-gluing} describes results about lifting and gluing powers of EO Hasse invariants. A key technical point in our arguments is working with the Cohen-Macaulay property to avoid embedded components. 

\section{Shimura varieties of Hodge type} 
\label{sec-basic-objects-of-study}
\subsection{Background} 
\label{sec-shimura-varieties-hodge-type}
\subsubsection{Rational theory} \label{sec-shimura-rational-theory}

Let $\gx$ be a Shimura datum \cite[2.1.1]{Deligne-Shimura-varieties}. Write $E=\egx$ for the reflex field of $\gx$ and $\Ocal_E$ for its ring of integers. 
Given an open compact subgroup $\Kcal \subset \gofaf$, write $\shgx_{\Kcal}$ for Deligne's canonical model at level $\Kcal$ over $E$ (see \loccitn). Every inclusion $\Kcal' \subset \Kcal$ induces a finite \'{e}tale projection $\pi_{\Kcal'/\Kcal}:\shgx_{\Kcal'} \to \shgx_{\Kcal}$. 
Let $\shgx$ be the resulting tower of $E$-schemes\footnote{Of course $\shgx_{\Kcal}$ will only be a stack for certain $\Kcal$; this does not matter for our purposes}. It admits a right $\gofaf$-action given by a compatible system of isomorphisms $g: \shgx_{\Kcal} \stackrel{\sim}{\to} \shgx_{g^{-1}\Kcal g}$ for $g \in \gofaf$.     
Define $d$ to be the common dimension of all the $\shgx_{\Kcal}$.

\subsubsection{Symplectic embedding}

Let $g\geq 1$ and let $(V,\psi)$ be a $2g$-dimensional, non-degenerate symplectic space over $\QQ$. Write $GSp(2g)=GSp(V, \psi)$ for the group of symplectic similitudes of $(V,\psi)$. Write $\XX_g$ for the double Siegel half-space \cite[1.3.1]{Deligne-Shimura-varieties}. The pair $\shdagsp$ is the Siegel Shimura datum; it has reflex field $\QQ$. Recall that $\gx$ is of Hodge type if there exists an embedding of Shimura data $\varphi:\gx \hookrightarrow \shdagsp$ for some $g \geq 1$. Henceforth, assume $\gx$ is of Hodge-type.

\subsubsection{Integral model} 
\label{sec-shimura-integral}

For the rest of this paper, fix a prime $p \not \in \Ram(\GG) \cup \{2\}$ (\S\ref{sec-def-ramification}). Let $\Gcal$ be a reductive, $\ZZ_{(p)}$-model of $\GG$ and $\Kcal_p:=\Gcal(\ZZ_p) \subset \gofqp$ the associated hyperspecial subgroup. 

Let $\pfr$ be a prime of $E$ above $p$ and let $\Ocal_{E,\pfr}$ be the localization of $\Ocal_E$ at $\pfr$. Write $E_\pfr$ for the completion of $E$ at $\pfr$ and $\Ocal_{\pfr}$ for its ring of integers. By Vasiu \cite[\Th~0]{Vasiu-Preabelian-integral-canonical-models} and 
Kisin \cite[\Th~1]{Kisin-Hodge-Type-Shimura}, as $\Kcal^p$ ranges over sufficiently small open compact subgroups of $G(\AA_f^p)$, the sub-tower of $E$-schemes $(\shgx_{\Kcal^p\Kcal_p})_{\Kcal^p}$ admits an integral canonical model $(\Sscr_{\Kcal^p\Kcal_p})_{\Kcal^p}$ with $\gofafp$-action over $\Ocal_{E,\pfr}$ in the sense of Milne \cite{Milne-integral-canonical-models}. 

For short, say that $\Kcal \subset \gofaf$ is a $p$-hyperspecial level when $\Kcal$ is an open, compact subgroup of $\gofafp$ of the form $\Kcal=\Kcal^p\Kcal_p$ with $\Kcal_p \subset \gofqp$ hyperspecial and $\Kcal^p \subset \gofafp$. The projections between levels
$\pi_{\Kcal'/\Kcal}:\Sscr_{\Kcal'} \to \Sscr_{\Kcal}$ and the right $\gofafp$-action
$g:\Sscr_{\Kcal} \to \Sscr_{g^{-1}\Kcal g}$ are denoted the same way as for the canonical model (\S\ref{sec-shimura-rational-theory}).

\subsubsection{Integral symplectic embedding}  \label{sec-integral-symplectic-embed} Let $\varphi:\gx \hookrightarrow \shdagsp$ be an embedding of Shimura data.
By \cite[2.3.1]{Kisin-Hodge-Type-Shimura}, there is a $\ZZ_{(p)}$-lattice $\Lambda \subset V$ such that $\GG \to GL(V)$ extends to a closed embedding of $\ZZ_{(p)}$-group schemes $\Gcal \to GL(\Lambda)$. By Zarhin's trick, we may assume that $\psi:\Lambda \times \Lambda \to \QQ$ is a perfect pairing with values in $\ZZ_{(p)}$ and that $\varphi$ extends to an embedding of $\ZZ_{(p)}$-group schemes
\begin{equation}
\label{eq-integral-symplectic-embedding}
\varphi:\Gcal \to GSp(\Lambda, \psi).
\end{equation}

Fix such $(\varphi, \Lambda)$ for the rest of the paper. Let $\tilde \Kcal_p:=GSp(\Lambda, \psi)(\ZZ_p)$, a hyperspecial subgroup of $GSp(V,\psi)(\QQ_p)$. For $\tilde \Kcal=\tilde \Kcal_p \tilde \Kcal^{p}$, write $\Shgk$ for the integral canonical model of $\Sh(GSp(2g), \XX_g)_{\tilde \Kcal}$ over $\ZZ_{(p)}$.

For $\Kcal^p$ sufficiently small, there exists $\tilde \Kcal^p\subset GSp(2g, \AA_f^p)$ open compact such that $\varphi(\Kcal^p)\subset \tilde \Kcal^p$ and there is a finite morphism of $\Ocal_{E,\pfr}$-schemes:
\begin{equation}
\label{integral-model-map}
\varphi^{\Sh}:\Shk \to \Sscr_{g, \tilde \Kcal}\otimes_{\ZZ_{(p)}} \Ocal_{E,\pfr}.
\end{equation}

\subsubsection{The cocharacter $\mu$} 
\label{sec-shimura-root-data-mu} 
Given $h \in \XX$, let $\mu \in X_*(\GG)$ be the associated minuscule cocharacter,  given by $\mu(z)=h_\CC \circ \mu_0$ (\S\ref{sec-notn-cocharacter}). The reflex field $E$ of $\gx$ is the field of definition of the $\gofc$-conjugacy class $[\mu]$ of $\mu$. 

For any algebraically closed extension $K/E$, the conjugacy class $[\mu]$ defines a unique conjugacy class $[\mu]_K$ of cocharacters of $\GG_K$ defined over $E$. In particular, we obtain a conjugacy class $[\mu]_{\overline{E}_\pfr}$ for a choice of an algebraic closure $\overline{E}_\pfr$ of $E_\pfr$. Since $\GG_{\QQ_p}$ is unramified, it is in particular quasi-split. Thus, there exists a representative $\mu \in X_*(\GG_{E_{\pfr}})$ of $[\mu]_{\overline{E}_\pfr}$ defined over $E_\pfr$.

Define $h_g \in \XX_g$ and $\mu_g \in X_*(GSp(2g))$ by $h_g=\varphi \circ h$ and $\mu_g:=\varphi \circ \mu$. The cocharacters $\mu, \mu_g$ determine parabolic subgroups, as explained in \S\ref{cochardata}. Specifically, we get the following subgroups:

\begin{enumerate}
\item A pair of opposite parabolic subgroups $(\PP^-,\PP^+)$ in $\GG_{E_\pfr}$ attached to $\mu_{E_\pfr}$, and a common Levi subgroup $\LL:=\PP^-\cap \PP^+=\cent(\mu_{E_\pfr})$. We set $\PP:=\PP^-$.
\item A pair of opposite parabolic subgroups $(P_g^-, P_g^+)$ in $GSp(2g)_{E_\pfr}$ attached to $\mu_{g, E_\pfr}$, and a common Levi subgroup $L_g:=P_g^-\cap P_g^+=\cent(\mu_{g,E_\pfr})$. We set $P_g:=P_g^-$. One has $\PP^\pm=P^\pm_g\cap \GG$ and $L_g \cap \GG=\LL$.
\end{enumerate}

\begin{comment}
Fix an embedding $\varphi:\gx \to \shdagsp$ of Shimura data, where $GSp(2g)=GSp(V,\psi)$ and $(V,\psi)$ is a symplectic space over $\QQ$.
\end{comment}

Fix a Borel pair $(\BB, \TT)$ in $\GG_{\QQ_p}$ such that $\BB_{E_\pfr}\subset \PP$. Write $I\subset \Delta$ for the type of $\PP$. Set $\BB_\LL=\BB \cap \LL$.

\subsubsection{Compatibility with the complex theory}
\label{sec-shimura-compatible-complex}
In \S\ref{sec-galois-reps}, we will fix an isomorphism $\iota: \qpbar \stackrel{\sim}{\to} \CC$. We choose $\iota$ compatibly with $\PP$ so that 
the $\CC$-parabolic  $\iota\PP_{\qpbar}$ deduced via $\iota$ is the stabilizer of the Hodge filtration associated to the $\RR$-Hodge structure $\Ad \circ h$.  
 Consequently, given $\alpha \in \Phi\setminus \Phi_{\LL}$, one has $\alpha \in \Phi^+$ if and only if the image of $\alpha$-root space of $\Lie(\GG)_{\CC}$ is non-zero in $\Lie(\GG)_{\CC}/\Lie(\iota \PP_{\qpbar})$, which is also the $(-1,1)$-part of the Hodge structure $\Ad \circ h$ (conventions \ref{sec-based-root-data}, \ref{sec-notn-bigrading}).

\subsubsection{Integral structure theory}
\label{sec-integral-structure-theory}
\begin{comment}
The conjugacy class $[\mu]$ over $E_\pfr$ extends to one of cocharacters of $\Gcal$; we may choose a representative $\mu$ defined over $\Ocal_{\pfr}$ since $\Gcal$ is quasi-split. 
This defines a model $\Lcal$ of $\LL$ over $\Ocal_{\pfr}$. 
\end{comment}

Since $\Gcal_{\ZZ_p}$ is also quasi-split, we may assume that the representative $\mu$ of $[\mu]_{\overline{E}_\pfr}$ extends to a cocharacter
$\mu: \GG_{m,\Ocal_{\pfr}}\to \Gcal_{\Ocal_{\pfr}}$.
The centralizer of $\mu$ is a Levi subgroup $\Lcal\subset \Gcal_{\Ocal_\pfr}$. Define $\mu_g : \GG_{m,\Ocal_{\pfr}} \to GSp(\Lambda,\psi)_{\Ocal_\pfr}$ by $\mu_g:=\varphi \circ \mu$. Since $\SS \stackrel{h_g}{\to} GSp(V_{\RR}, \psi) \to GL(V_{\RR})$ is an $\RR$-Hodge structure of type $\{(0,-1),(-1,0)\}$ (\S\ref{sec-notn-bigrading}),
the cocharacter $\mu_g$ defines a $\ZZ$-grading $\Lambda\otimes \Ocal_\pfr = \Lambda _0 \oplus \Lambda _{-1}$ (where $\GG_m$ acts through $\mu_g$ by $z\mapsto z^{-i}$ on $\Lambda_i$ for $i=0,-1$).

Since $\zp$ and $\Ocal_{\pfr}$ are discrete valuation rings, \cite[\Chap XXVI, 3.5]{SGA3} and the valuative criterion of properness applied to the schemes of parabolic subgroups of $\Gcal_{\Ocal_\pfr}$ imply that $\PP$ extends uniquely to a parabolic subgroup $\Pcal\subset \Gcal_{\Ocal_\pfr}$. Similarly, there is a unique extension of $\BB$ to a Borel subgroup $\Bcal \subset \Gcal_{\ZZ_p}$. Set $\Bcal_\Lcal:=\Lcal\cap \Bcal$, it is the unique extension of the Borel subgroup $\BB_\LL$ of $\LL$.

\subsubsection{Hodge bundles} 
\label{sec-hodge-line-bundle} 
Let $\Acal_g/\Shgk$ be the universal abelian scheme over $\Shgk$. 
Define the Hodge line bundle $\omega(\varphi)$ on $\Shko$ associated to $\varphi$  as in \cite[\Def~5.1.2]{MadapusiHodgeTor}:
In the Siegel case, set \begin{equation}
\label{def omega_g} 
\Omega_g=\fil^1H^1_{\dR}(\Acal_g/\Shgk) \mbox{ and } \omega_g =\det \Omega_g,  \end{equation} 
where $\fil^1$ refers to the Hodge filtration. Formation of $H^1_{\dR}(\Acal_g/\Sscr_g,\tilde{\Kcal})$ and its Hodge filtration $\Omega_g$ commutes with arbitrary base change, see \S\ref{sec-hdg-de-rham} where a more difficult version is explained for toroidal compactifications.

In general put $H^1_{\dR}(\Acal/\Shk):=\varphi^*H^1_{\dR}(\Acal_g/\Shgk)$,
$\Omega=\varphi^*\Omega_g$ and $\omega=\varphi^*\omega_g$   
(since the embedding $\varphi$ was fixed in \S\ref{sec-integral-symplectic-embed},  we omit $\varphi$ from the notation).

\begin{rmk}[Dependence on $\varphi$]
\label{rmk-dependence-varphi}
{\em A priori} both $\Omega$ and its determinant $\omega$ depend on the embedding $\varphi$. {\em A posteriori} it follows from \Th~\ref{th-quasi-const} below that when $\GG^{\ad}$ is $\QQ$-simple, the ray generated by $\omega$ in $\Pic(\Shk)$ is independent of the embedding $\varphi$, see \cite[\Cor~1.4.5]{Goldring-Koskivirta-quasi-constant}. 
However, this independence of $\varphi$ is not used in this paper. 
Embeddings of products show that the above invariance is best possible. 
It remains unclear to us to what extent the Hodge vector bundle $\Omega$ depends on $\varphi$.
\end{rmk}

\subsubsection{Torsors and automorphic bundles} \label{sec-torsors} 
Following \cite{Kisin-Hodge-Type-Shimura,MadapusiHodgeTor}, we recall how the vector bundle $H^1_{\dR}(\Acal/\Shk)$ yields a $\Gcal$-torsor $I_{\Gcal}$ on $\Shk$. We follow \cite{Kisin-Hodge-Type-Shimura} in the use of $H^1_{\dR}$, while \cite{MadapusiHodgeTor} uses its dual $H_{1,\dR}$. This choice seems easier to relate to the universal $G$-Zip over $\Shko$, see \S\ref{sec-G-zip-extension}.   
By \cite[1.3.2]{Kisin-Hodge-Type-Shimura}, the group $\Gcal$ is the pointwise stabilizer of a finite collection of tensors $(s_{\alpha}) \subset \Lambda^{\otimes}$.  
By 2.3.9 of \loccit (see also \cite[5.3.1]{MadapusiHodgeTor}), there are associated sections $(s_{\alpha, \dR}) \subset H^0(\Shk, \varphi^* H^1_{\dR}(\Acal_g/\Shgk)^{\otimes})$.  
Then \begin{equation}
\label{eq-def-G-torsor-1}
I_{\Gcal}:=\Isomcal_{\Shk}((H^1_{\dR}(\Acal/ \Shk), (s_{\alpha, \dR})), (\Lambda, (s_{\alpha})) \otimes \Shk)
\end{equation} is a $\Gcal$-torsor on $\Shk$. 
By 5.3.4 of \loccitn, the subsheaf $I_{\Pcal}\subset I_{\Gcal}$ consisting of isomorphisms which map the Hodge filtration of $H^1_{\dR}(\Acal/ \Shk)$ to $\Lambda_0\otimes \Shk$
is a $\Pcal$-torsor. Define an $\Lcal$-torsor $I_{\Lcal}$ on $\Shk$ as the quotient $I_{\Pcal}/R_u(\Pcal)$ where $R_u(\cdot)$ denotes the unipotent radical. 

Let $N$ be a finite extension of $E_{\pfr}$ with ring of integers $\mathcal O_N$ and prime $\wp$ lying over $\pfr$. 
Since $I_{\Pcal}/\Pcal \cong \Shk$,  every algebraic representation of $\Pcal$ on a finite free $\Ocal_{N}$-module $W$ gives rise to a vector bundle on $\Shk$ as in \S\ref{Gvarstacks}. By setting $R_u(\Pcal)$ to act trivially, any representation of $\Lcal$ on $W$ gives rise to one of $\Pcal$.

Let $\eta \in \chargpldom$. 
Let $\Lscr(\eta)$ be the associated $\LL_{\qpbar}$-equivariant (or $\LL_{\qpbar}$-linearized) line bundle on the flag variety $(\LL/\BB_\LL)_{\qpbar}$. Then there exists an extension $N$ as above such that $\Lscr(\eta)$ descends to an $\Lcal$-equivariant line bundle on $\Lcal/\Bcal_\Lcal \times_{\Ocal_{\pfr}} \Ocal_N$ over $\Ocal_N$. 
Continue to call the descended line bundle $\Lscr(\eta)$. Let $V_{\eta}$ be the representation of $\Lcal$ on $H^0(\Lcal/\Bcal_{\Lcal} \times_{\Ocal_{\pfr}} \Ocal_N, \Lscr(\eta))$. By the Borel-Weil  Theorem, $V_{\eta} \otimes \qpbar$ is irreducible of highest weight $\eta$. In general $V_{\eta}$ is the (possibly reducible) highest weight, induced module denoted $H^0(\eta)$ in \cite{jantzen-representations}. 

The automorphic vector bundle of weight $\eta$ is the vector bundle $\veta$ on $\Shk$  afforded by the torsor $I_{\Lcal}$ and the representation $V_{\eta}$. If different levels are in play, write $\Vscr_{\Kcal}(\eta)$ to specify the level. 

\subsubsection{$\gofafp$-equivariant objects} 
\label{sec-gofafp-objects-1}
Recall our convention \S\ref{sec-reduct-mod-pn}: Write $\Sscr_{\Kcal}^+:=\Shk$, $\Sscr_{\Kcal}^0=\Shk \otimes_{\Ocal_{E,\pfr}}E_{\pfr}$ for its generic fiber and $\Shkn:=\Shk \otimes_{\Ocal_{E,\pfr}} \Ocal_{E,\pfr}/\pfr^n$ for $n\geq 1$. Let $n \in \zgeqz \cup \{+\}$. A $\gofafp$-equivariant sheaf on the tower $(\Sscr^n_{\Kcal})_{\Kcal^p}$ is a system of sheaves $(\Fscr_{\Kcal})_{\Kcal^p}$ such that 
$\pi_{\Kcal'/\Kcal}^*\Fscr_{\Kcal}=\Fscr_{\Kcal'}$ and $g^*\Fscr_{g^{-1}\Kcal g} =\Fscr_{\Kcal}$ for all $\Kcal'^{p} \subset \Kcal^p$ and all $g \in \gofafp$, where $\Kcal'=\Kcal_p\Kcal'_p$. 
Similarly a morphism of $\gofafp$-equivariant sheaves $(\Fscr_{\Kcal}) \to (\Gscr_{\Kcal})$ consists of a family of morphisms 
$\Fscr_{\Kcal} \to \Gscr_{\Kcal}$ which is compatible with $\pi_{\Kcal'/\Kcal}$ and $g$.  
This applies to both \'etale sheaves (in particular \'etale torsors) and to (quasi-)coherent $(\Ocal_{\Sscr_{\Kcal}})_{\Kcal^p}$-modules. Given a $\gofafp$-equivariant sheaf $(\Fscr_{\Kcal})$, a $\gofafp$-equivariant section $t$ is a system $(t_{\Kcal} \in H^0(\Sscr^n_{\Kcal}, \Fscr))$ satisfying $\pi_{\Kcal'/\Kcal}^*t_{\Kcal}=t_{\Kcal'}$ and $g^* t_{g^{-1}\Kcal g}=t_{\Kcal}$.

More generally, define a $\gofafp$-system of schemes as a system of schemes $(\Zscr_{\Kcal})_{\Kcal^p}$ such that for every inclusion $\Kcal'^p \subset \Kcal^p$, one has a projection $\Zscr_{\Kcal'} \to \Zscr_{\Kcal}$ and for every $g\in \gofafp$ a system of isomorphisms $g:\Zscr_{\Kcal} \stackrel{\sim}{\to} \Zscr_{g^{-1} \Kcal g}$ (subject to the usual compatibility conditions). A $\gofafp$-equivariant morphism $\alpha: (\Zscr_{1,\Kcal}) \to (\Zscr_{2,\Kcal})$ is a system of morphisms $\alpha_{\Kcal}:\Zscr_{1,\Kcal} \to \Zscr_{2,\Kcal}$ such that $\Zscr_{1,\Kcal'} \to \Zscr_{1,\Kcal}$ (resp. $g:\Zscr_{1,\Kcal} \stackrel{\sim}{\to} \Zscr_{1,g^{-1} \Kcal g}$) is the pullback of $\Zscr_{2,\Kcal'} \to \Zscr_{2,\Kcal}$ (resp. $g:\Zscr_{2,\Kcal} \stackrel{\sim}{\to} \Zscr_{2,g^{-1} \Kcal g}$) along $\alpha$. In particular, a $\gofafp$-equivariant subscheme of $(\Sscr^n_{\Kcal})_{\Kcal^p}$ is a system of subschemes $(\Zscr_{\Kcal})_{\Kcal^p}$, where $\Zscr_{\Kcal}$ is a subscheme of $\Sscr^n_{\Kcal}$ such that $\pi_{\Kcal'/\Kcal}^{-1}(\Zscr_{\Kcal})=\Zscr_{\Kcal'}$ and $g^{-1}(\Zscr_{g^{-1}\Kcal g})=\Zscr_{\Kcal}$ (both scheme-theoretically). It follows directly from the definitions that a system of ideal sheaves $(\Ical_{\Kcal} \subset \Ocal_{\Sscr^n_{\Kcal}})$ is $\gofafp$-equivariant if and only if the corresponding system of zero-schemes is. 

By construction the universal abelian scheme $\Acal\to \Sscr$,  the vector bundle $H^1_{\dR}(\Acal/\Sscr)$ and the sections $s_{\alpha, \dR}$ are $\gofafp$-equivariant. It follows that also the bundles $\Vscr(\eta)$ and the torsors $I_{\Gcal},I_{\Pcal},I_{\Lcal}$ are all $\gofafp$-equivariant.

\subsubsection{Vector bundle dictionary} 
\label{sec-vector-bundle-dictionary}
Let $\std: GSp(V, \psi) \hookrightarrow GL(V)$ be the forgetful representation; it is an irreducible $GSp(V,\psi)$-module. Let $\eta_{g, \Omega}$ be the highest weight of $\std$ relative to our choice of Borel pair $(\BB, \TT)$ (\S\ref{sec-shimura-root-data-mu} in the case $\GG=GSp(V, \psi)$) and convention on positive roots (\S\ref{sec-based-root-data}).
Using the notation of \S\ref{sec-torsors}, put $\eta_{g,\omega}:=\det V_{\eta_{g,\Omega}}^{\vee}$. Recall that 
$\Vscr(\eta_{g, \Omega})^{\vee}\cong \Omega_g$
 and  
$\Vscr(\eta_{g, \omega}) \cong \omega_g$ \cf \cite[Proof of \Th~5.5.1]{Goldring-Galois-reps-HLDS} or \cite[p. 257, \Ex~(b)-(c)]{Chai-Faltings-book}. 
(Consider one standard set of choices/coordinates: $V=\QQ^{2g}$, $\psi=\left(\begin{array}{cc}
    0 & -I_g \\
    I_g &  0
\end{array} \right)$, $\TT$ is the diagonal ($\QQ$-split) maximal torus, identify $X^*(\TT)$ with $\{(a_1, \ldots a_g;c) \in \ZZ^{g+1} \ | \sum a_i \equiv c \pmod 2\}$ 
via $\diag(t_1z, \ldots ,t_gz, t_1^{-1}z, \ldots ,t_g^{-1}z) \mapsto t_1^{a_1} \cdots t_g^{a_g}z^c$, $\Delta=\{e_1-e_2, \ldots, e_{g-1}-e_g, 2e_g\}$ and $\Delta_L=\Delta \setminus \{2e_g\}$. Then $\eta_{g, \Omega}$ (resp. $\eta_{g, \omega}$) is identified with $\diag(0,\ldots,0,-1;-1)$ (resp. $\diag(-1, \ldots, -1; -g)$), \cf \cite[p. 306]{Taylor-GSp4}.)

In general, set $\eta_{\omega}=\varphi^*\eta_{g, \omega}$. Then $
\Vscr(\eta_{\omega}) \cong \omega$ as $\gofafp$-equivariant line bundles on $\Shk$.
We call $\eta_{\omega}$ the {\em Hodge character} associated to $\varphi$. This `modular' definition agrees with the group-theoretic one given in \Def~\ref{defHodge}\ref{item-def-omegazip}. 

\subsection{Universal $G$-zip over $S_\Kcal$} \label{sec-univ-Gzip} Let $S_{\Kcal}:=\Sscr_{\Kcal}^1$ be the special fiber of $\Sscr_{\Kcal}$.
We briefly review the construction (\cite[\Th~2.4.1]{ZhangEOHodge} and \cite[\S 5]{Wortmann-mu-ordinary}) of the universal $G$-zip $\underline{I}=(I,I_P,I_Q,\iota)$ over $\Shko$. Denote by $\kappa:=\Ocal_E/\pfr$ the residue field of $\pfr$. Define $G:=\Gcal\otimes \FF_p$ and write again $\mu:\GG_{m,\kappa}\to G_\kappa$ and $\mu_g=\mu \circ \varphi$ for the reduction of $\mu,\mu_g$. Recall (\S\ref{sec-integral-structure-theory}) that we have $\Lambda = \Lambda_{0} \oplus \Lambda _{-1}$. Define parabolic subgroups $P$, $Q$ of $G_\kappa$ as the stabilizers in $G_\kappa$ of $\textrm{Fil}_P:=\Lambda_{0,\kappa}$ and $\textrm{Fil}_Q:= {}^\sigma \Lambda_{-1,\kappa}$, respectively.

Denote by $(\overline{s}_\alpha)$ the reduction of $(s_\alpha)$ to $\Lambda_{\FF_p}$ and by $\overline{s}_{\alpha,\dR}$ the reduction of $(s_{\alpha,\dR})$ to $\Shko$. The sheaf $H^1_{\dR}(A/S_\Kcal):=H^1_{\dR}(\Acal/\Sscr_\Kcal)\otimes \kappa$ admits a Hodge filtration $\textrm{Fil}_{\rm H}:=\Omega_{A/S_\Kcal}$ and a conjugate filtration $\textrm{Fil}_{\rm conj}$. Furthermore, their graded pieces are related by the Cartier isomorphisms:
\begin{equation}
\iota_0 : \textrm{Fil}_{\rm H}^{(p)} \simeq H^1_{\dR}(A/\Shko)/\textrm{Fil}_{\rm conj} \quad \textrm{and} \quad
\iota_1 : (H^1_{\dR}(A/\Shko)/\textrm{Fil}_{\rm H})^{(p)} \simeq \textrm{Fil}_{\rm conj}.
\end{equation}
One then defines:
\begin{align}
I&:=\Isomcal_{\Shko}((H^1_{\dR}(A/\Shko),\overline{s}_{\dR}), (\Lambda,\overline{s})\otimes \Ocal_{\Shko}) \label{Gzipuniv1} \\
I_P&:=\Isomcal_{\Shko}((H^1_{\dR}(A/\Shko),\overline{s}_{\dR},\textrm{Fil}_{\rm H}), (\Lambda,\overline{s}, \textrm{Fil}_P)\otimes \Ocal_{\Shko}) \label{Gzipuniv2}\\
I_Q&:=\Isomcal_{\Shko}((H^1_{\dR}(A/\Shko),\overline{s}_{\dR},\textrm{Fil}_{\rm conj}),(\Lambda,\overline{s}, \textrm{Fil}_Q)\otimes \Ocal_{\Shko}) \label{Gzipuniv3}
\end{align}
and the isomorphism of $L^{(p)}$-torsors is given by $\iota=(\iota_0,\iota_1)$. This yields a universal $G$-zip $\underline{I}=(I,I_P,I_Q,\iota)$ over $\Shko$.
 It  gives rise to a morphism of stacks
\begin{equation}
\label{zeta}
\zeta:\Shko \to \GZip^{\mu}
\end{equation}
which is smooth by \cite[\Th~3.1.2]{ZhangEOHodge}.  Recall that $\GZip^\mu\simeq [E\backslash G]$. For every $w \in {}^I W$, define the EO-stratum $S_w$ in $\Shko$ by $S_w:=\zeta^{-1}([E\backslash G_w])$. Since $\zeta$ is smooth and $G_w$ is smooth, $S_w$ is a smooth, locally closed subscheme of $\Shko$.

Since the datum of a $G$-Zip consists of torsors and isomorphisms between them, one has the notion of $\gofafp$-equivariant $G$-Zip on $S_{\Kcal}$ by \S\ref{sec-gofafp-objects-1}. By construction,  $\underline{I}$ is $\gofafp$-equivariant: Given $\Kcal'^{p} \subset \Kcal^p$ and $g \in \gofafp$, one has commutative triangles
 \begin{equation}
 \label{eq-zeta-triangles}
 \xymatrixrowsep{.25pc}\xymatrixcolsep{2.5pc}\xymatrix{S_{\Kcal'} \ar[dd]_{\pi_{\Kcal'/\Kcal}} \ar[rd]^-{\zeta_{\Kcal'}} &  & & 
S_{\Kcal} \ar[rd]^-{\zeta_{\Kcal}}
\ar[dd]_g^*[@]{\sim}
&
\\ 
& \GZip^\mu & \mbox{ and } & & \GZip^{\mu}  \\
 S_{\Kcal} \ar[ru]_-{\zeta_{\Kcal}}   & & & S_{g^{-1} \Kcal g}\ar[ru]_-{\zeta_{g^{-1}\Kcal g}}
} \end{equation}
\subsection{Hasse invariants for Ekedahl-Oort strata}
\label{sec-hasse-shimura} 
We explain how to apply the general group theoretic \Th~\ref{GTHI} on Hasse invariants to $\Shko$. We give two ways to deal with the $(p,L)$-admissible hypothesis of \Th~\ref{GTHI}: (i) Show that it is satisfied by the Hodge character $\eta_{\omega}$ for all primes $p$, (ii) Show that for Hodge-type Shimura varieties, one can reduce to $G=GL(n)$, for which $\eta_{\omega}$ is minuscule. Regarding (i), one has:

\begin{theorem}[\cite{Goldring-Koskivirta-quasi-constant}, \Th~1.4.4]\label{th-quasi-const}
The Hodge character $\eta_{\omega}$ of a symplectic embedding $\varphi':\gx\hookrightarrow \shdagsp$ is quasi-constant \textnormal{(\Def~\ref{def-orb-p-close})}.
\end{theorem}
\begin{rmk} The theorem is proved using classical Hodge theory, specifically the methods of \cite{Deligne-Shimura-varieties}. In particular, it is valid for any symplectic embedding $\varphi'$, not just the special integral ones considered in \S\ref{sec-integral-symplectic-embed}.

\end{rmk}
As for (ii): Using \Th~\ref{GTHI} for $G=GL(n)$ and the generalization of the discrete fibers theorem \cite[\Th~2]{Goldring-Koskivirta-zip-flags} mentioned in \Rmk~\ref{rmk-intro-discrete-fibers}, we proved:
\begin{theorem}[\cite{Goldring-Koskivirta-zip-flags}, \Cor~6.2.2]
\label{th-zip-flags}
Let $(G,\mu)$ be a maximal cocharacter datum  (\cite[\S2.4]{Goldring-Koskivirta-zip-flags}, we do not assume $(G, \mu)$ arises by reduction mod $p$ from a Shimura datum). Let $\chi \in X^*(L)$ be a maximal character \textnormal{(\loccitn, \Def~2.4.3)}. Then there exists $N\geq 1$ such that for every $w \in {}^I W$, there is a section $h_w \in H^0([E\backslash \overline{G}_w], \Vscr(N\chi))$ with  $\nonvanish(h_w)=[E\backslash G_w]$.

\end{theorem}
Return to the setting where $(G,\mu)$ is associated to $\gx$ and $\varphi$ as in \S\ref{sec-univ-Gzip}. 
\begin{corollary} 
\label{cor-hasse-hodge} There exists $N\geq 1$ such that for every $w \in {}^I W$, there exists a section $h_w \in H^0([E\backslash \overline{G}_w], \Vscr(N\eta_{\omega}))$ whose non-vanishing locus is precisely $[E\backslash G_w]$. \end{corollary}
\begin{proof}[Proof 1:]
Since $\eta_{\omega}$ is easily seen to be $L$-ample,  \Th~\ref{th-quasi-const} shows that it is $(p,L)$-admissible for all primes $p$ (\S\ref{sec-conditions-characters}). Therefore the desired sections exist by \Th~\ref{GTHI}.
\end{proof}
\begin{proof}[Proof 2:]  The forgetful representation $GSp(V,\psi) \to GL(V)$ exhibits $(G, \mu)$ as a maximal cohcaracter datum and $\eta_{\omega}$ as a maximal character. Thus the corollary is a special case of \Th~\ref{th-zip-flags}. 

\end{proof}

\begin{corollary}[EO Hasse invariants]
\label{cor-hasse-shimura}
 For every EO stratum $S_w \subset \Shko$, the section $\zeta^*h_w\in H^0(\overline{S}_w,\omega^{N})$ is $\gofafp$-equivariant and $\nonvanish(\zeta^* h_w)=S_w$.
\end{corollary}

\begin{proof}
The section $\zeta^*h_w$ is $\gofafp$-equivariant by ~\eqref{eq-zeta-triangles} and $\nonvanish(\zeta^* h_w)=S_w$ follows from \Cor~\ref{cor-hasse-hodge}.
\end{proof}
Next we deduce the affineness of strata in the compact case (\Cor~\ref{cor-affine-cpt}). The generalization to the noncompact case (\Cor~\ref{cor-affine-noncpt}) is treated in \Prop~\ref{prop-min-cpt-strata}.
\begin{corollary}[Affineness, compact case]
\label{cor-affine-cpt-main-text}
 Suppose $\Shko$ is proper over $k$. Then the strata $S_w$ are affine for all $w \in {}^I W$.
\end{corollary}
\begin{proof}
 The Hodge line bundle $\omega$ is ample on $\Shko$ \cite[5.2.11(b)]{MadapusiHodgeTor}. Since the non-vanishing locus of a section of an ample line bundle on a proper scheme is affine, the result follows from \Cor~\ref{cor-hasse-shimura}. 
 \end{proof}

Combining \Th~\ref{th-quasi-const} with \Th~\ref{discrete_fibers} gives a `discrete fibers' result for symplectic embeddings of Shimura varieties:
\begin{corollary}[Discrete Fibers]
\label{disc-fib-Shimura}
Let $\varphi':\gx\hookrightarrow \shdagsp$ be an arbitrary symplectic embedding of Shimura data.  Then the induced morphism of stacks $\GZip^{\mu} \to \GspZip^{\mu_g}$ has discrete fibers. In other words, if two EO strata map to the same one under $\varphi'$, then there is no closure relation between them. \label{cor discrete fibers}\end{corollary}

For example, let $\varphi'$ be the tautological embedding of a PEL-type Shimura variety in its underlying Siegel variety. The naive stratification of $S_{\Kcal}$ is defined by taking preimages of Siegel EO strata; it is induced by the isomorphism class of the underlying $BT_1$ without its additional structure. \Cor~\ref{cor discrete fibers} states that a naive EO stratum is topologically a disjoint union of (true) EO strata.

\section{Lifting injective sections for $p$-nilpotent schemes and the length stratification}
\label{sec-lifting-gluing}

\subsection{Gluing sections}

%\subsection{The main lifting result}

\begin{comment}
\begin{theorem}\label{th-glue} Let $X$ be a finite type $\mathcal O_{\pfr}/\pfr^n$-scheme with special fiber $X^1$. 
Let $\mathcal L$ be a line bundle on $X$. 
Assume there is a decomposition \begin{equation} (X^1)_{\red}=X_1 \cup X_2 \cup \ldots \cup X_r \label{eq irred comp}, \end{equation} with each $X_i$ closed reduced subscheme of $(X^1)_{\red}$. 
There exists an integer $m_0$ such that for all $a\in \ZZ_{\geq 1}$ with $p^a\geq m_0$ and every $r$-tuple of sections $s_i \in H^0(X_i, \mathcal L)$, $1\leq i \leq r$, such that $s_i=s_j$ on $(X_i \cap X_j)_{\red}$, there exists a unique section $s \in H^0(X, \mathcal L^{p^{2a}})$ whose restriction to $X_i$ is $s_i^{p^{2a}}$ and which is Zariski-locally the $p^a$-power of a section $s'$ of $\Lcal^{p^a}$ which restricts to $s_i^{p^a}$ on $X_i$. 
Moreover, if $X$ has no embedded components, then the section $s$ is injective if and only if the sections $s_1, \ldots, s_r$ are all injective.  \end{theorem}
\end{comment}

\begin{theorem}
\label{th-glue} 
Let $X$ be a Noetherian scheme where $p$ is nilpotent. 
Let $\mathcal L$ be a line bundle on $X$. 
Let $\Dcal$ be a decomposition of $X_{\red}$: \begin{equation} X_{\red}=X_1 \cup X_2 \cup \ldots \cup X_r \label{eq irred comp}, \end{equation} where $X_i\subset X_{\red}$ are closed reduced subschemes. 
Then
\begin{enumerate}[label=(\alph*)]
\item 
\label{item-lift1} 
There exists an integer $m_0(X,\Lcal,\Dcal)$ such that for all integers $a\geq m_0(X,\Lcal,\Dcal)$ and every $r$-tuple of sections $s_i \in H^0(X_i, \mathcal L)$, $1\leq i \leq r$, such that $s_i=s_j$ on $(X_i \cap X_j)_{\red}$, there exists a unique section $s \in H^0(X, \mathcal L^{p^{2a}})$ which is Zariski-locally the $p^a$-power of a section $s'$ of $\Lcal^{p^a}$ which restricts to $s_i^{p^{2a}}$ on $X_i$. 

\item 
\label{item-lift2} 
If $X$ has no embedded components, then  $s$ is injective if and only if $s_1, \ldots, s_r$ are all injective.
\end{enumerate}
\end{theorem}

\begin{proof}
First we consider \ref{item-lift1}. Since $X$ is Noetherian, it is covered by finitely many open affine subsets $X=U_1\cup ... \cup U_d$ such that $\Lcal|_{U_j}=\Ocal_{U_j}$. For each $j=1,...,d$, let $\Dcal_j$ be the decomposition $(X_i\cap U_{j, \rm red})_i$ of $U_{j, \textrm{red}}$. If we prove the result for each $(U_j, \Lcal|_{U_j}, \Dcal_j)$, then we may take $m_0:=\sup_j \{m_0(U_j,\Lcal_j, \Dcal_j)\}$. Thus we may work locally and assume that $X=\spec(A)$ with $A$ Noetherian and $\Lcal=\Ocal_X$. By induction we reduce to $r=2$. Assertion \ref{item-lift1} follows from Lemma \ref{lemmaunique} below.

For~\ref{item-lift2}, we may also assume $X=\spec(A)$ with $A$ Noetherian. The result then follows from the claim that $s$ is injective if and only if $s|_{X_{\rm red}}$ is injective. This is an immediate consequence of the fact that the set of zero-divisors of $A$ is the union of the associated primes of $A$ (\cite[\Th~6.1]{Matsumura-commutative-ring-theory}).
\end{proof}

\begin{lemma}\label{lemmaunique}
Let $A$ be a Noetherian ring where $p$ is nilpotent. Let $\Ncal(A)$ be the nilradical and $I_1,I_2$ two radical ideals of $A$ such that $I_1\cap I_2=\Ncal(A)$. There exists an integer $m_0$ such that for all $n\geq m_0$ and for all $x_1,x_2\in A$ such that $x_1 -x_2\in \sqrt{I_1+I_2}$, there exists a unique $x\in A$ satisfying:
\begin{enumerate}[label=(\alph*)]
\item  \label{condicong1} $x\equiv x_1^{p^{2n}} \pmod{I_1}$ and  $x\equiv x_2^{p^{2n}} \pmod{I_2}$.
\item  \label{condicong2}  $x=y^{p^n}$ for some $y\in A$.
\end{enumerate}
\end{lemma}

\begin{proof}
Let $h,d,m\geq 1$ be integers such that
\begin{enumerate}
\item $p^h=0$ in $A$,
\item $(\sqrt{I_1+I_2})^{p^m}=0$ in $A/(I_1+I_2)$,
\item $\Ncal(A)^{p^d}=0$.
\end{enumerate}

Set $m_0:=\sup \{d+h-1,m\}$. There is an exact sequence:
\begin{equation}0 \longrightarrow  A/ \Ncal(A) \longrightarrow A/I_1 \oplus A/I_2 \longrightarrow A/(I_1+I_2) \longrightarrow 0 \label{eq x1 x2}\end{equation}
where the second map is $(g_1, g_2) \mapsto g_1-g_2$. Let $n\geq m_0$ and $x_1,x_2\in A$ such that $x_1-x_2 \in \sqrt{I_1+I_2}$. Then $x_1^{p^n}-x_2^{p^n}\in I_1 + I_2$ because the ring $A/(I_1+I_2)$ has characteristic $p$. Hence there exists $z\in A$ such that $z\equiv x_1^{p^n} \pmod{I_1}$ and  $z\equiv x_2^{p^n} \pmod{I_2}$. Therefore $x:=z^{p^n}$ satisfies the conditions.

To prove uniqueness, assume that $x,x'\in A$ satisfy \ref{condicong1} and \ref{condicong2}. In particular, we have $x-x'\in \Ncal(A)$. Write $x=y^{p^n}$ and $x'=y'^{p^n}$ for some $y,y'\in A$. The ring $A/\Ncal(A)$ has characteristic $p$, so $x-x'=y^{p^n}-y'^{p^n}=(y-y')^{p^n}=0$ in $A/\Ncal(A)$, hence $y-y'\in \Ncal(A)$. We deduce $(y-y')^{p^d}=0$ in $A$, hence also in $A/pA$. It follows that $y^{p^d}\equiv y'^{p^d} \pmod{pA}$. Thus $y^{p^{d+m-1}}\equiv y'^{p^{d+m-1}} \pmod{p^mA}$ for all $m\geq 1$. Hence $y^{p^{u}}=y'^{p^{u}}$ in $A$ for all $u\geq d+h-1$. In particular $x=x'$. 
\end{proof}

\subsection{Length stratification}
\label{sec-length-strata}

Retain the setting of \S\ref{sec-hasse-shimura}. In particular, $G$ is a reductive $\FF_p$-group and $\mu:\GG_{m,k}\to G_k$ is a cocharacter such that the cocharacter datum $(G,\mu)$ arises from a Shimura datum of Hodge-type (\S\ref{sec-hasse-shimura}). By \Cor~\ref{cor-hasse-hodge}, there exists an integer $N\geq 1$ and sections $h_w\in H^0([E\backslash\overline{G}_w],\omega^N)$ for each $w\in {}^I W$, such that the non-vanishing locus of $h_w$ is exactly the open substack $[E\backslash G_w]$.

Let $d:=\ell(w_{0,I} w_0)$. The length of any element $w\in {}^I W$ satisfies $0\leq \ell(w)\leq d$. For an integer $0\leq j \leq d$, define:
\begin{equation}
G_j=\bigcup_{\ell(w)=j}G_w \quad \textrm{and} \quad \overline{G}_j=\bigcup_{\ell(w)\leq j}G_w.
\end{equation}
Endow the locally closed subsets $G_j,\overline{G}_j$ with the reduced subscheme structure. We call $G_j$ the $j$th \emph{length stratum} of $G$.

\begin{lemma}\label{Weillemma}
One has $\overline{G}_j=\bigcup_{\ell(w) = j}\overline{G}_w$.
\end{lemma}

\begin{proof}
Recall that the underlying topological space of $\GZip^\mu$ is isomorphic to ${}^I W$ endowed with the topology induced by the partial order $\preccurlyeq$, which is in general finer than the restriction of the Bruhat order of $W$ (\cite[\Th~6.2]{Pink-Wedhorn-Ziegler-zip-data}).

Let $w\in \leftexp{I}{W}$ and let $w=s_{\alpha_1} \cdots s_{\alpha_r}$ be a reduced expression. Then $w':=s_{\alpha_1} \cdots s_{\alpha_{r-1}}\in \leftexp{I}{W}$ and $\ell(w')=r-1$. Furthermore, $w'\preccurlyeq w$ because $\preccurlyeq$ is finer than the Bruhat order. Hence any element in $\leftexp{I}{W}$ of length $r\geq 1$ has an element of $\leftexp{I}{W}$ of length $r-1$ in its closure. But since $w\mapsto w_{0,I}ww_0$ is an order-reversing involution, we deduce similarly that any element of length $\leq j$ in ${}^I W$ lies in the closure of an element of ${}^I W$ of length $j$.
\end{proof}

In the next proposition, we prove that the length stratification of $G$ is principally pure, i.e $G_j$ is the non-vanishing locus of a section over $\overline{G}_j$.

\begin{proposition}\label{prop-hj}
There exists an integer $N'\geq 1$ and for each $0\leq j \leq d$, a section $h_j \in H^0(\left[ E \backslash \overline{G}_j\right],\omega^{N'})$ such that the non-vanishing locus of $h_j$ is exactly $G_j$.
\end{proposition}
We call the $h_j$ {\em length Hasse invariants} of $\GZip^\mu$.
\begin{proof}
One has the decomposition $
\overline{G}_j=\bigcup_{\ell(w)=j}\overline{G}_w$ and for each $w$ with $\ell(w)=j$, the section $h_w$ afforded by \Cor~\ref{cor-hasse-hodge}. We may interpret each $h_w$ as a regular $E$-eigenfunction on $\overline{G}_w$ for the character $N\eta_\omega$. Using \Th~\ref{th-glue}, we obtain a function $\overline{G}_j\to \AA^1$ which restricts to $h^{r}_w$ on $\overline{G}_w$ for some $r\geq 1$. Hence $h_j$ is an $E$-eigenfunction for the character $Nr\eta_{\omega}$. It follows that $h_j$ identifies with an element of $H^0(\left[ E \backslash \overline{G}_j\right],\omega^{N'})$ for $N'=Nr$, and the result follows.
\end{proof}

Let $S$ be a scheme of finite type over $k$ and $f:S\to \GZip^\mu$ a morphism of stacks (not assumed smooth). For a character $\chi\in X^*(L)$, write $\Vscr_S(\chi):=f^*(\Vscr(\chi))$. For $w\in {}^I W$ and $j\in \{0,...,d\}$, define locally-closed subsets $S_w$, $S^*_w$, $S_j$, $S^*_j$ as the preimages by $f$ of $\left[E\backslash G_w \right]$,
$\left[E\backslash \overline{G}_w \right]$, $\left[E\backslash G_j \right]$, $\left[E\backslash \overline{G}_j \right]$ respectively. Endow each of them with the reduced subscheme structure. The $(S_w)_w$ form a locally-closed decomposition of $S$. It is false in general that $S^*_w$ is the Zariski closure of $S_w$ in $S$, but this holds for example when $f$ is smooth. One has
\begin{equation}
S_j=\bigsqcup_{\ell(w)=j}S_w \quad \textrm{and} \quad S^*_j=\bigsqcup_{\ell(w)\leq j}S_w.
\end{equation}
We call $S_j$ the $j$th \emph{length stratum} of $S$. Set $S_{j}=S^*_{j}=\emptyset$ for all $j<0$. Furthermore, by Lemma \ref{Weillemma} we have
\begin{equation}
S^*_j=\bigcup_{\ell(w) = j}S^*_w.
\end{equation}

\begin{proposition}
\label{prop-length-strata}
Assume the following:
\begin{enumerate}[label=(\alph*)]
\item \label{item-length-strata1} \label{item-length-equidim} The scheme $S$ is equi-dimensional of dimension $d$.
\item \label{item-length-strata2} \label{item-length-nonempty} The stratum $S_w$ is non-empty for all $w\in {}^I W$.
\item \label{item-length-strata3} The stratum $S_e=S_0$ is zero-dimensional.
\end{enumerate}
Then: \begin{enumerate}
\item
\label{item-S_j-equidim}
The schemes $S_j$ and $S^*_j$ are equi-dimensional of dimension $j$,
\item
\label{item-h_j-inject}
The sections $h_j$ are injective \textnormal{(\S\ref{sec-notation-line-bundle})}; equivalently $S_j$ is open dense in $S^*_j$.
\item
\label{item-S_w-equidim}
For $w\in {}^I W$, $S_w$ is equi-dimensional of dimension $\ell(w)$.
\end{enumerate} 
\end{proposition}

\begin{rmk} It is not claimed that $S_w$ is dense in $S^*_w$, nor that $S^*_w$ is equi-dimensional.
\end{rmk}

\begin{proof}
Let $j\geq 1$. Since $S^*_{j-1}$ is the set-theoretic vanishing locus $f^*(h_j)$ in $S^*_j$, we deduce that
\begin{equation}
\dim(S^*_j)-1\leq \dim(S^*_{j-1}) \leq \dim(S^*_j).
\end{equation}
Assumptions \ref{item-length-strata1} and \ref{item-length-strata3} imply that $\dim(S^*_j)=j$ for all $j=0,..., d$. We prove by decreasing induction on $j$ that $S^*_j$ is equi-dimensional of dimension $j$. This is true for $j=d$ by assumption \ref{item-length-strata1}. Assume it is true for $j\geq 1$. Since $S^*_{j-1}$ is the set-theoretic vanishing locus of $f^*(h_j)$ on $S^*_j$, which is equi-dimensional of dimension $j$, all irreducible components of $S^*_{j-1}$ have codimension $\leq 1$ in $S^*_j$. But  $\dim(S_{j-1}^*)=j-1$, so $S^*_{j-1}$  is equi-dimensional of dimension $j-1$. Since $S_j$ is open in $S^*_j$, it is also equi-dimensional of dimension $j$. This proves~\eqref{item-S_j-equidim}.

The section $f^*(h_j)$ does not restrict to zero on any irreducible component of $S^*_j$, so $S_{j}$ is dense in $S^*_j$ and $h_j$ is injective, which shows \eqref{item-h_j-inject}.

Finally, we show~\eqref{item-S_w-equidim}. If $w\in {}^IW$ satisfies $\ell(w)\leq j$ and $\dim(S_w)=j$, then $\ell(w)=j$. Conversely, let $w\in W$ be an element of length $j$. Since $S^*_j$ is pure of dimension $j$, there exists $w'\in {}^I W$ of length $j$ satisfying $\dim(S^*_{w'})=j$ such that $S_w$ intersects $S^*_{w'}$. The continuity of $f$ implies that $w'=w$, so $\dim(S^*_w)=j$. Hence $\dim(S_w)=j$ as well. Let $Z$ be an irreducible component of $S_w$, and assume $\dim(Z)<j$. Then $Z$ is contained in $S^*_{w'}$ for an element $w'\neq w$ of length $j$, because $S^*_j$ is pure of dimension $j$. Again, this contradicts the continuity of $f$. Hence $S_w$ is pure of dimension $j$.
\end{proof}

\section{Extension of Hasse invariants to compactifications}
\label{sec-eo-compactification}

\subsection{Compactifications} \label{sec-shimura-compactification}
We review the results of Madapusi-Pera \cite{MadapusiHodgeTor} that we shall use about integral models of toroidal and minimal compactifications of Hodge-type Shimura varieties. 
As recalled below, most of these results rely on the corresponding statements in the Siegel case, which are due to Chai-Faltings \cite{Chai-Faltings-book}. 
In-between the works of Chai-Faltings and Madapusi-Pera, generalizations to the intermediate PEL case were given in a series of works by Lan \cite{Lan-book-thesis,Lan-toroidal-kuga,Lan-higher-koecher,Lan-Ordinary-Loci}. We shall often refer to Lan for precise references in the Siegel case.

\subsubsection{Toroidal compactifications}
\label{sec-toroidal-review}
Recall the choice of hyperspecial $\tilde \Kcal_p \subset GSp(2g, \qp)$ (\S\ref{sec-integral-symplectic-embed}) and the map $\varphi^{\Sh}:\Shk \to \Shgk$ \eqref{integral-model-map}. In the Siegel case, let $\tilde \Sigma$ be a finite, admissible rpcd (rational, polyhedral cone decomposition) for $(GSp(2g), \XX_g,\tilde \Kcal)$  \cite[2.1.22-2.1.23]{MadapusiHodgeTor}. In general, let $\Sigma$ be a refinement of the finite, admissible rpcd obtained by pullback
$\varphi^*\tilde \Sigma$ for $(\GG, \XX,\Kcal)$.  
By \cite{Chai-Faltings-book} (resp. \cite[\Ths~1, ~4.1.5]{MadapusiHodgeTor})
\begin{comment}
Madapusi reference checked (arxiv 2018)
\end{comment}
there exists a toroidal compactification $\Sscr_{g, \tilde \Kcal}^{\tilde \Sigma}$ of $\Shgk$ (resp. toroidal compactifications $\Sscr_{\Kcal}^{\Sigma}$, $\Sscr_{\Kcal}^{\varphi^*\tilde \Sigma}$ of $\Shk$). The toroidal compactification $\Sscr_{\Kcal}^{\varphi^* \tilde \Sigma}$ is defined as the normalization of the Zariski closure of $\shgx_{\Kcal}$ in $\Sscr_{g,\tilde \Kcal}^{\tilde \Sigma}$. So one has maps $\Sscr_{\Kcal}^{\Sigma} \to \Sscr_{\Kcal}^{\varphi^*\tilde \Sigma}$ and $\Sscr_{\Kcal}^{\varphi^*\tilde \Sigma} \to \Sscr_{g, \tilde \Kcal}^{\tilde \Sigma}$.
Denote their composite by
\begin{equation}
\label{eq-varphi-tor}
\varphi^{\Sigma/\tilde \Sigma}:\Sscr_{\Kcal}^{\Sigma} \to \Sscr_{g, \tilde \Kcal}^{\tilde \Sigma}.
\end{equation}
If in addition $\tilde \Sigma$ is smooth (and such rpcd's do exist), then $\Sscr_{g, \tilde \Kcal}^{\tilde \Sigma}$ is smooth; if both $\tilde{\Sigma}$ and $\Sigma$ are chosen smooth (and again such choices do exist), then $\Sscr_{\Kcal}^{\Sigma}$ is smooth too. 
\subsubsection{Compactification of the universal semi-abelian scheme}
\label{sec-univ-semi-abel}
Let $\tilde{\Acal}_g$ be the universal semi-abelian scheme over $\Sscr_{g, \tilde \Kcal}^{\tilde \Sigma}$. Let $\beta:\overline{\Acal}_g \to \Sscr_{g, \tilde \Kcal}^{\tilde \Sigma}$ be one of the compactifications of $\tilde{\Acal_g}$ constructed in \cite[\S6.1]{Chai-Faltings-book} and \cite[\S2.B]{Lan-toroidal-kuga}. By  \cite[\Prop~3.19]{Lan-toroidal-kuga} (see also \cite[\Chap~VI, \S1, \Rmk~1.4]{Chai-Faltings-book}), after possibly refining $\tilde{\Sigma}$ (and a corresponding combinatorial datum used in the definition of $\overline{\Acal}_g)$ one can arrange that the structure map $\beta$ is log integral (by \cite[Prop~3.18]{Lan-toroidal-kuga} this is equivalent to $\beta$ being equidimensional). In particular, by refining $\Sigma$ one may ensure that $\Sigma$ is a refinement of $\varphi^*\tilde \Sigma$ for some $\tilde \Sigma$ which admits a $\beta$ which is log integral.  The log integrality of $\beta$ will be used to ensure that the Frobenius is well-behaved on the log de Rham cohomology of $\Sscr_{g, \tilde{\Kcal}}^{\tilde{\Sigma}}$, see the proof of Lemma~\ref{lem-FV}.

\subsubsection{Extension of de Rham and Hodge bundles}
\label{sec-extend-Hodge-bundle}
By \cite[\Chap~6, \Th~4.2 \& preceding examples]{Chai-Faltings-book} and \cite[\Th~2.15(d)]{Lan-toroidal-kuga}, $H^1_{\logdr}(\overline{\Acal}_g/\Sscr_{g, \tilde \Kcal}^{\tilde \Sigma})$ is a rank $2g$, locally free extension of $H^1_{\dR}(\Acal_g/\Sscr_{g, \Kcal})$.  
Let $H_{\dR}^{1,\can}(\Acal_g/\Shgk)$ be the canonical extension of $H_{\dR}^{1}(\Acal_g/\Shgk)$ to $\Sscr_{g, \tilde \Kcal}^{\tilde \Sigma}$ .
 By \loccitn,  $H_{\dR}^{1,\can}(\Acal_g/\Shgk)=H^1_{\logdr}(\overline{\Acal}_g/\Sscr_{g, \tilde \Kcal}^{\tilde \Sigma})$. 
 
 By \cite[\Prop~6.9(2)]{Lan-toroidal-kuga}  the natural pairing on $H_{\dR}^{1}(\Acal_g/\Shgk)$ extends (uniquely) to a perfect pairing on $H_{\dR}^{1,\can}(\Acal_g/\Shgk)$. The canonical extension  $\Omega^{\can}_g\subset H_{\dR}^{1, \can}(\Acal_g/\Shgk)$  is a maximal, totally isotropic, locally direct factor; it is also the pull-back along the identity section of $\Omega^1_{\tilde{\Acal_g}/\Shgk}$. 
 
In the general case, pull back along $\varphi^{\Sigma/\tilde \Sigma}$: Put $H^1_{\logdr}(\overline{\Acal}/\Sscr_{\Kcal}^{\Sigma}):=\varphi^{\Sigma/\tilde \Sigma,*}H^1_{\logdr}(\overline{\Acal}_g/\Sscr_{g, \tilde \Kcal}^{\tilde \Sigma})$ 
and $\Omega^{\can}:=\varphi^{\Sigma/\tilde \Sigma,*}\Omega_g^{\can}$; these give  locally free extensions of $H^1_{\dR}(\Acal_g/\Sscr_{\Kcal})$ and $\Omega$ respectively, see also \cite[5.1.1]{MadapusiHodgeTor}. Since the Hodge line bundle will be used so frequently, we abuse notation and continue to write $\omega$ for $\det \Omega^{\can}$ and $\omega_g$ for $\det \Omega^{\can}_g$.

\subsubsection{Degeneration of the Hodge-de Rham spectral sequence and base change: The Siegel case}
\label{sec-hdg-de-rham}
As in \cite[\Th~2.15(3)(a)]{Lan-toroidal-kuga}, let $$\overline{\Omega}^1_{\overline{\Acal}_g/\Sscr_{g, \tilde \Kcal}^{\tilde \Sigma}}:=\Omega^1_{\overline{\Acal}_g/\Ocal_{E,\pfr}}(d \log \infty)/\beta^* \Omega^1_{\Sscr_{g, \tilde \Kcal}^{\tilde \Sigma} /\Ocal_{E,\pfr}}(d \log \infty).$$ 
By \Th~2.15(3)(c) of \loccitn, the logarithmic  Hodge-de Rham spectral sequence \begin{equation}
\label{eq-hdg-dR}
E_1^{i,j}:=R^j\beta_*\overline{\Omega}^i_{\overline{\Acal}_g/\Sscr_{g, \tilde \Kcal}^{\tilde \Sigma}} \Rightarrow H^{i+j}_{\logdr}(\overline{\Acal}_g/\Sscr_{g, \tilde \Kcal}^{\tilde \Sigma})
\end{equation} degenerates at $E_1$ and the Hodge cohomology sheaves 
$R^j \beta_* \overline{\Omega}^i_{\overline{\Acal}_g/\Sscr_{g, \tilde \Kcal}^{\tilde \Sigma}}$ are locally free. As in \loccitn, it follows that $H^n_{\logdr}(\overline{\Acal}_g/\Sscr_{g, \tilde \Kcal}^{\tilde \Sigma})= \wedge^n H^1_{\logdr}(\overline{\Acal}_g/\Sscr_{g, \tilde \Kcal}^{\tilde \Sigma})$ for all $n \geq 0$. Therefore $H^n_{\logdr}(\overline{\Acal}_g/\Sscr_{g, \tilde \Kcal}^{\tilde \Sigma})$ is also locally free for all $n \geq 0$.  Applying \cite[\Cor~(8.6)]{Katz-nilpotent-monodromy-ihes}  gives:
\begin{lemma}
\label{lem-log-dR-base-change}
For all $n \geq 0$, formation of $H^n_{\logdr}(\overline \Acal_g/\Sscr_{g, \tilde \Kcal}^{\tilde \Sigma})$ commutes with arbitrary base change.
\end{lemma}

\subsubsection{Extension of torsors and bundles}
\label{sec-toroidal-bundles}
By \cite[\Prop~5.3.2]{MadapusiHodgeTor}, the sections $(s_{\alpha, \dR})$ of \S\ref{sec-torsors} extend to $H^1_{\logdr}(\Acal/\Sscr_{\Kcal}^{\Sigma})$. Repeating  the definition of $I_{\Gcal},I_{\Pcal}$ (\S\ref{sec-torsors})
with $\Sscr_{\Kcal}^{\Sigma}$, the extended sections and $H^1_{\logdr}(\Acal/\Sscr_{\Kcal}^{\Sigma})$ give extensions to torsors $I_{\Gcal}^{\Sigma},I_{\Pcal}^{\Sigma}$ on $\Sscr_{\Kcal}^{\Sigma}$. 
Again $I_{\Lcal}^{\Sigma}:=I_{\Pcal}^{\Sigma}/R_u(\Pcal)$ gives an $\Lcal$-torsor on $\Sscr_{\Kcal}^{\Sigma}$. 
Repeating the construction of \S\ref{sec-torsors} with $I_{\Lcal}^{\Sigma}$ in place of $I_{\Lcal}$  
associates a vector bundle $\vcaneta$ on $\Sscr_{\Kcal}^{\Sigma}$ with every $\eta \in \chargpldom$. Let $D=D_{\Kcal}^{\Sigma}$ be the boundary divisor of $\Sscr_{\Kcal}^{\Sigma}$ relative $\Sscr_{\Kcal}$. Set $\vsubeta:=\vcaneta(-D)$. Since $D$ is a relative, effective Cartier divisor in $\Sscr_{\Kcal}^{\Sigma}/\Ocal_{E,\pfr}$, the sheaf $\vsubeta$ is again locally free.

\subsubsection{The minimal compactification}
\label{sec-minimal-review}
Let $\Sscr_{g, \tilde \Kcal}^{\min}$ (resp. $\Shkmin$) be the minimal compactification of $\Shgk$ (resp. $\Shk$) constructed in \cite[Chap. 5, \Th~2.3]{Chai-Faltings-book} (resp. \cite[5.2.1]{MadapusiHodgeTor}). By construction, there are proper maps $\pi:\Sscr_{\Kcal}^{\Sigma} \to \Shkmin$ and $\pi_g:\Sscr_{g, \tilde \Kcal}^{\tilde \Sigma} \to \Shgkmin$ satisfying $\pi_* \Ocal_{\Sscr_{\Kcal}^{\Sigma}}=\Ocal_{\Shkmin}$ and $\pi_* \Ocal_{\Sscr_{g, \tilde \Kcal}^{\tilde \Sigma}}=\Ocal_{\Shgkmin}$. It is expected that the higher direct images of $\pi, \pi_g$ vanish in general; this is known in the PEL case, see Condition~\ref{cond-hdi} and \Rmk~\ref{rmk-hdi}. This issue will play an important role in  \S\S\ref{sec-vanish-strata}-\ref{sec-galois-reps}.

The Hodge line bundle $\omega$ on $\Sscr_{\Kcal}^{\Sigma}$ descends to an ample line bundle on $\Shkmin$ independent of the rpcd $\Sigma$ and still denoted $\omega$ \cite[5.1.3, 5.2.1]{MadapusiHodgeTor}. Applying the universal property of the minimal compactification \cite[Lemma 5.2.2]{MadapusiHodgeTor} to $\Shkmin$, the morphism $\pi_g \circ \varphi^{\Sigma/\tilde \Sigma}:\Sscr_{\Kcal}^{\Sigma} \to \Sscr^{\min}_{g, \tilde \Kcal}$ and the ample line bundle $\omega_g$ on $\Sscr^{\min}_{g, \tilde \Kcal}$ shows that $\pi_g \circ \varphi^{\Sigma/\tilde \Sigma}$ factors through $\Shkmin$. The result is a map 
\begin{equation}
\label{eq-varphi-min}
\varphi^{\rm min}:\Shkmin \to \Sscr^{\rm min}_{g, \tilde \Kcal}.
\end{equation} 

 Since $\pi_g \circ \varphi^{\Sigma/\tilde \Sigma}$ and $\pi$ are both proper, so is $\varphi^{\min}$. Observe that $(\varphi^{\min})^* \omega_g=\omega$ and both $\omega_g$ and $\omega$ are ample (see \S\ref{sec-hodge-line-bundle} and the proof of \cite[\Th~5.2.11(2)]{MadapusiHodgeTor}). Therefore $\varphi^{\min}$ is quasi-affine. As it is both quasi-affine and proper, $\varphi^{\min}$ is finite.

\subsubsection{$\gofafp$-equivariance for toroidal compactifications} 
\label{sec-gofafp-toro}
The notions of $\gofafp$-equivariant sheaf, torsor, subscheme, section and $G$-Zip from \S\S\ref{sec-gofafp-objects-1},~\ref{sec-univ-Gzip} generalize to the (double) tower $(\Sscr_{\Kcal}^{\Sigma})_{\Kcal^p, \Sigma}$ of toroidal compactifications. Later we shall consider sub-towers consisting of all $\Sscr^{\Sigma}_{\Kcal}$ such that $\Sigma$ is required to satisfy some property $P$ which is attainable by refinement (and no restriction is imposed on $\Kcal^p$); the examples of $P$ which will occurr are: "$\Sigma$ smooth", "$\Sigma$ a refinement of $\varphi^*{\tilde \Sigma}$ for some $\tilde \Sigma$ which admits $\beta$ log-integral" (\S\ref{sec-univ-semi-abel}) and their common intersection. Then $\gofafp$-equivariance is defined in the same way, except that
 we restrict to the morphisms 
$\pi^{\Sigma'/\Sigma}_{\Kcal'/\Kcal}:\Sscr_{\Kcal'}^{\Sigma'} \to \Sscr_{\Kcal}^{\Sigma}$ and 
$g^{-1}_{\Sigma^g/\Sigma}:\Sscr_{g^{-1} \Kcal g}^{\Sigma^g}\to \Sscr_{\Kcal}^{\Sigma}$, where ${\Sigma}$ has property $P$ and  $\Sigma'$ (resp. $\Sigma^g$) is a refinement of $\pi_{\Kcal'/\Kcal}^*\Sigma$ (resp. $(g^{-1})^* \Sigma)$) having property $P$. 

The system of boundary subschemes $(D^{\Sigma}_{\Kcal})$ is $\gofafp$-equivariant. The torsors $I_{\Gcal}^{\Sigma},I_{\Pcal}^{\Sigma},I_{\Lcal}^{\Sigma}$ and the automorphic vector bundles $\vcaneta,\vsubeta$ are all $\gofafp$-equivariant. 

\subsection{Extension of the universal $\GZip$} \label{sec-G-zip-extension} For the rest of \S\ref{sec-eo-compactification}, we assume that the rpcd $\Sigma$ for $(\GG, \XX, \Kcal)$ is a refinement of $\varphi^* \tilde \Sigma$, where the rpcd $\tilde{\Sigma}$ for $(GSp(2g), \XX_g, \tilde{\Kcal})$ has been chosen so that $\beta$ is log integral (\S\ref{sec-univ-semi-abel}). By contrast, the rpcd  $\Sigma$ for $(\GG, \XX, \Kcal)$ is not assumed to be smooth.

In this section, we prove:
\begin{theorem}[\Th~\ref{th-intro-tor-min}]
\label{th-tor-ext-Gzip}
The $G$-zip $\underline{I}$ \textnormal{(\S\ref{sec-univ-Gzip})} extends to a $\gofafp$-equivariant $G$-zip $\underline{I}^{\Sigma}$ over $S_{\Kcal}^{\Sigma}$. In particular,  $\zeta_{\Kcal}: \Shko\to \GZip^\mu$   extends to
\begin{equation}
\zeta_{\Kcal}^{\Sigma} : S_{\Kcal}^{\Sigma} \to \GZip^\mu,
\end{equation}
and~\eqref{eq-zeta-triangles} extends to commutative triangles \begin{equation}
 \label{eq-zeta-triangles-toroidal}
 \xymatrixrowsep{.25pc}\xymatrixcolsep{2.5pc}
\xymatrix{S^{\Sigma'}_{\Kcal'} \ar[dd]_{\pi^{\Sigma'/\Sigma}_{\Kcal'/\Kcal}} \ar[rd]^-{\zeta^{\Sigma'}_{\Kcal'}} &  & & 
S^{\Sigma}_{\Kcal} \ar[rd]^-{\zeta^{\Sigma}_{\Kcal}}
&
\\ 
& \GZip^\mu & \mbox{ and } & & \GZip^{\mu}.  
\\
 S^{\Sigma}_{\Kcal} \ar[ru]_-{\zeta^{\Sigma}_{\Kcal}}   
& & & 
S^{\Sigma^g}_{g^{-1} \Kcal g}
\ar[uu]^{g^{-1}}
\ar[ru]_-{\zeta^{\Sigma^g}_{g^{-1}\Kcal g}}
} \end{equation}
\end{theorem}
Define $S_w^{\Sigma}:=(\zeta_{\Kcal}^{\Sigma})^{-1}([E\backslash G_w])$ and $S_w^{\Sigma,*}:=(\zeta_{\Kcal}^{\Sigma})^{-1}([E\backslash \overline{G}_w])$ for all $w\in {}^I W$.
Before embarking on the proof, note the following immediate, important corollary: 
\begin{corollary}\label{exttorhw}
The Hasse invariant $h_w\in H^0(\overline{S}_w, \omega^{N_w})$ of 
\textnormal{\Cor~\ref{cor-hasse-shimura}} extends to a $\gofafp$-equivariant section $h_w^{\Sigma} \in H^0(S_w^{\Sigma,*}, \omega^{N_w})$ with non-vanishing locus precisely $S_w^{\Sigma}$.
\end{corollary}
 
 Let $A_g/S_{g, \tilde \Kcal}$ (resp. $\tilde{A}_g/S_{g, \tilde \Kcal}^{\tilde \Sigma}, \overline{A}_g/S_{g, \tilde \Kcal}^{\tilde \Sigma}$) be the special fiber of $\Acal_g/_{\Sscr_{g, \tilde \Kcal}}$ (resp. $\tilde{\Acal}_g/\Sscr_{g, \tilde \Kcal}^{\tilde \Sigma}, \overline{\Acal}_g/\Sscr_{g, \tilde \Kcal}^{\tilde \Sigma}$) and similarly for $H^{1,\can}_{\dR}(A_g/S_{g, \tilde \Kcal})$ etc. (\S\ref{sec-extend-Hodge-bundle}). 
 
The Frobenius  $F$ and Verschiebung $V$ of $A_g/S_{g, \tilde \Kcal}$ induce maps of $\Ocal_{S_{g, \tilde \Kcal}}$-modules $F:(H_{\dR}^{1}(A_g/\Shgko))^{(p)} \to H_{\dR}^{1}(A_g/\Shgko)$ and 
$V:H_{\dR}^{1}(A_g/\Shgko) \to H_{\dR}^{1}(A_g/\Shgko)^{(p)}$. These satisfy the usual conditions $\Ker(F)=\ima(V)$, $\Ker(V)=\ima(F)$. The polarization of $A_g$ induces perfect pairings on $H_{\dR}^{1}(A_g/\Shgko)$ and $H_{\dR}^{1}(A_g/\Shgko)^{(p)}$ under which $F,V$ are transpose to each other. 

\begin{lemma} \label{lem-FV}
There exist two maps of locally free $\Ocal_{\Shgko}$-modules
\begin{align}
\label{eq-F-toro}
F&:H_{\dR}^{1,\can}(A_g/\Shgko)^{(p)} \to H_{\dR}^{1,\can}(A_g/\Shgko) \\
\label{eq-V-toro}
V&:H_{\dR}^{1,\can}(A_g/\Shgko) \to H_{\dR}^{1,\can}(A_g/\Shgko)^{(p)} 
\end{align}
satisfying the following conditions
\begin{enumerate}[label=(\alph*)]
\item \label{item-FV1} $F,V$ extend the Frobenius and Verschiebung maps on $H^1_{\dR}(A_g/\Shgko)$.
\item \label{item-FV2} $\Ker(F)=\ima(V)$ and $\Ker(V)=\ima(F)$.
\item \label{item-FV3} $\Ker(V)$ and $\Im(V)$ are rank $g$ locally free and locally direct summands of $H_{\dR}^{1,\can}(A_g/\Shgko)$.
\end{enumerate}
\end{lemma}

\begin{proof}
The Frobenius $F:\overline{A}_g \to \overline{A}_g^{(p)}$ over $S_{g, \tilde \Kcal}^{\tilde \Sigma}$ induces $F:H_{\logdr}^{1}( \overline{A}_g^{(p)}/ S_{g, \tilde \Kcal}^{\tilde \Sigma}) \to H_{\logdr}^{1}( \overline{A}_g/ S_{g, \tilde \Kcal}^{\tilde \Sigma})$. Applying Lemma~\ref{lem-log-dR-base-change} to $F$ gives $H_{\logdr}^{1}( \overline{A}_g^{(p)}/ S_{g, \tilde \Kcal}^{\tilde \Sigma}) \cong H_{\logdr}^{1}( \overline{A}/ S_{g, \tilde \Kcal}^{\tilde \Sigma})^{(p)}$. Together this yields a map as in~\eqref{eq-F-toro}.

To simplify notation, write $\overline{\Mcal}=H_{\logdr}^{1}(\overline{A}_g/S_{g, \tilde{\Kcal}}^{\tilde{\Sigma}} )$. We have constructed an extension $F:\overline{\Mcal}^{(p)} \to \overline{\Mcal}$ over $S_{g, \tilde \Kcal}^{\tilde \Sigma}$. Define $V:\overline{\Mcal} \to \overline{\Mcal}^{(p)}$ as its transpose with respect to the perfect pairings on $\overline{\Mcal}$ and $\overline{\Mcal}^{(p)}$. Clearly, $V$ extends the Verschiebung map on $S_{g, \tilde \Kcal}$. It remains to check that $F,V$ satisfy conditions \ref{item-FV2} and \ref{item-FV3} of the lemma.

Since the relations $FV=0$ and $VF=0$ hold over $\Shgko$, which is open dense in $S_{g, \tilde \Kcal}^{\tilde \Sigma}$, they continue to hold over $S_{g, \tilde \Kcal}^{\tilde \Sigma}$. Hence $\ima(F)\subset \Ker(V)$ and $\ima(V)\subset \Ker(F)$. On the other hand, we claim that $(\Omega^{\can}_g)^{(p)} \subset \Ker(F)$. Indeed, $F$ induces an injective morphism $(\Omega^{\can}_g)^{(p)} / ((\Omega^{\can}_g)^{(p)} \cap \Ker(F)) \to \overline{\Mcal}$. Since we know the result over the open subscheme $\Shgko\subset S_{g, \tilde \Kcal}^{\tilde \Sigma}$, the sheaf $\Ncal:=(\Omega^{\can}_g)^{(p)} / ((\Omega^{\can}_g)^{(p)} \cap \Ker(F))$ restricts to zero on $\Shgko$. But $\Ncal$ is a subsheaf of a locally free sheaf, and since $S_{g, \tilde \Kcal}^{\tilde \Sigma}$ is reduced and $\Shgko$ is open dense, we deduce $\Ncal=0$. Thus $(\Omega^{\can}_g)^{(p)} \subset \Ker(F)$. 

Next, we claim that $\ima(F)$ is a locally free sheaf on $S_{g, \tilde \Kcal}^{\tilde \Sigma}$. It suffices to show $\dim_{k(x)}(\ima(F)\otimes k(x))=g$ for all $x\in S_{g, \tilde \Kcal}^{\tilde \Sigma}$. Since $(\Omega^{\can}_g)^{(p)}$ is locally a direct summand of $\Mcal^{(p)}$, we have an injection $(\Omega^{\can}_g)^{(p)} \otimes k(x) \to \Mcal^{(p)}\otimes k(x)$, which shows $\dim_{k(x)} (\ima(F)\otimes k(x)) \leq g$. The converse inequality follows simply from Nakayama's lemma because this dimension is $g$ generically on $S_{g, \tilde \Kcal}^{\tilde \Sigma}$. This proves that $\ima(F)$ is locally free of rank $g$.

For all $x\in S_{g, \tilde \Kcal}^{\tilde \Sigma}$, there is a natural surjection $\ima(F)\otimes k(x) \to \ima(F_x)$. In particular, $\rk(F_x)\leq g$. We claim that $\rk(F_x)=g$. 
To see this, consider $\Mcal:=H^1_{\logcrys}(\overline{A}_g / S^{\tilde \Sigma}_{g, \tilde \Kcal})$, endowed with its crystalline Frobenius $\Phi$. One has $\Mcal \otimes_{W(k)} k \simeq \overline{\Mcal}$ and $\Phi \otimes_{W(k)}k=F$ \cite[\Rmk~1.5]{Lan-Suh-general}. Recall that $\tilde{\Sigma}$ was chosen so that $\beta:\overline{\Acal}_g \to \Sscr_{g, \tilde{\Kcal}}^{\tilde{\Sigma}}$ is log integral. Using the log integrality of $\beta$ it is checked in \cite{Lan-Suh-general} that $\Sscr_{g, \tilde \Kcal}^{\tilde \Sigma}$ satisfies the two assumptions of \Prop~1.7 of \loccit (see esp. \Prop~1.13, Lemma~7.22 and its proof). In particular, the first implies that the top exterior power
$(\wedge^{2g} \Mcal, \wedge^{2g} \Phi)$ is the $\Phi$-span $\Ocal_{S_{g, \tilde \Kcal}^{\tilde \Sigma}/W(k)}(-g)$ as defined by Ogus\footnote{Ogus calls his spans $F$-spans since he uses $F$ for the crystalline Frobenius.} \cite[\Def~5.2.1]{Ogus-book-F-crystal-transversality-Hodge}. 
This means that the map $\wedge^{2g}\Phi:\wedge^{2g}\Mcal^{(p)} \to \wedge^{2g} \Mcal$ is Zariski locally on $S_{g, \tilde \Kcal}^{\tilde \Sigma}$ equal to  $p^g$ times  an isomorphism.  \begin{comment}
$$\wedge^{2g}\Phi:\wedge^{2g}H^{1}_{\logcrys}(\overline{A}_g/S_{g, \tilde{\Kcal}}^{\tilde{\Sigma}})^{(p)} \to \wedge^{2g}H^{1}_{\logcrys}(\overline{A}_g/S_{g, \tilde{\Kcal}}^{\tilde{\Sigma}})$$
\end{comment}

In particular, at the point $x$, the linear map $\Phi_x: \Mcal_x^{(p)} \to \Mcal_x$ satisfies that $\wedge^{2g} \Phi_x$ is $p^g$ times an isomorphism. By the theory of invariant factors, there exist suitable bases of $\Mcal_x$ in which $\Phi_x$ is given by the diagonal matrix $\diag(p^{a_1}, \ldots, p^{a_{2g}})$ for some integers $a_i$ satisfying $a_1 \geq \cdots \geq a_{2g} \geq 0$. Since the unique invariant factor of  $\wedge^{2g} \Phi_x$ is $p^g$, the only possibility is that $a_1=\cdots =a_g=1$ and $a_{g+1}=\cdots =a_{2g}=0$. Since  the reduction modulo $p$ of $(\Mcal_x, \Phi_x)$ is $(\Mcal\otimes k(x), F_x)$, one has $\rk(F_x)=g$ as claimed. 

Since $F_x$ and $V_x$ are transpose to each other, they have the same rank, so we deduce $\Ker(F_x)=\ima(V_x)$ and $\Ker(V_x)=\ima(F_x)$.
Since $\ima(F)$ is locally free, $\Ker(F)$ is locally a direct summand of $\overline{\Mcal}^{(p)}$. Hence $\Ker(F)\otimes k(x)\simeq \Ker(F_x)=\ima(V_x)$ for all $x\in S_{g, \tilde \Kcal}^{\tilde \Sigma}$. In particular, the inclusion $\ima(V)\subset \Ker(F)$ induces a surjective map $\ima(V)\otimes k(x)\to \Ker(F)\otimes k(x)$. So $(\Ker(F)/\ima(V))\otimes k(x)=0$. Hence $\Ker(F)=\ima(V)$ by Nakayama's lemma. Thus $\Ker(V)$ is locally a direct factor of $\Mcal$. By similar arguments, $\Ker(V)=\ima(F)$. This terminates the proof.
\end{proof}

Put $\Fil^{\tilde \Sigma}_{g, \rm conj}:=\Ker(V)$.  
By Lemma \ref{lem-FV}, $0 \subset \Fil_{g, \rm conj}^{\tilde \Sigma} \subset H^{1,\can}_{\dR}(A_g/S_{g, \tilde \Kcal}^{\tilde \Sigma})$  extends the conjugate filtration (\S\ref{sec-univ-Gzip}) to $S_{g, \tilde \Kcal}^{\tilde \Sigma}$. Set $\Fil^{\Sigma}_{\rm conj}:=\varphi^{\Sigma/\tilde \Sigma,*}\textrm{Fil}_{g, \rm conj}$. Then $\Fil^{\Sigma}_{\rm conj}$ provides an extension of the conjugate filtration on  $H_{\dR}^{1,\can}(A/S_{\Kcal}^{\Sigma})$ which is locally free and locally a direct summand. Replacing $\textrm{Fil}_{\rm conj}$ with 
$\Fil^{\Sigma}_{\rm conj}$ in \eqref{Gzipuniv3} gives an \'{e}tale sheaf $I_Q^{\Sigma}$ on $\Shktoro$. Furthermore, $I_Q^{\Sigma}$ is endowed with a natural $Q$-action.
\begin{lemma} 
\label{lem-q-torsor}
The \'etale sheaf $I_Q^{\Sigma}$ is a $Q$-torsor on $\Shktoro$
\end{lemma}
\begin{proof}
We thank Torsten Wedhorn for his help with this. Choose an \'{e}tale cover $U\to \Shktoro$ which trivializes the $G$-torsor $I^{\Sigma}$. Hence we have an isomorphism $I^{\Sigma}|_U\simeq G\times U$. The stabilizer of $\textrm{Fil}_{\rm conj}^{\Sigma}$ is a parabolic subgroup $\Qcal$ of $G\times U$, and we need to show that it is \'{e}tale locally on $U$ isomorphic to $Q$. To check this, it suffices to show that $\Qcal\otimes k(x)\simeq Q\otimes k(x)$ at every point $x$. This follows from the fact that the type of the parabolic $\Qcal\otimes k(x)$ is a locally constant function, and that the result holds generically (on the open subset $\Shko$). 
\end{proof}

\begin{proof}[\textnormal{Proof of \Th~\ref{th-tor-ext-Gzip}:}] By \S\ref{sec-toroidal-bundles}
and Lemma~\ref{lem-q-torsor}, we have torsors $I_{G}^{\Sigma},I_P^{\Sigma},I_Q^{\Sigma}$ for $G,P,Q$ extending $I_G, I_P,I_Q$ respectively. It remains to extend the isomorphisms $\iota_0$, $\iota_1$ to $\Shktoro$ (\S\ref{sec-univ-Gzip}). On $\Shgko$, these isomorphisms are naturally induced by the maps $F,V$. Since these maps extend to $S_{g, \tilde \Kcal}^{\tilde \Sigma}$ by Lemma \ref{lem-FV}, so do $\iota_0$ and $\iota_1$. One obtains similar isomorphisms on $\Shktoro$ by pull-back from $S_{g, \tilde \Kcal}^{\tilde \Sigma}$. This completes the construction of the $G$-zip $\underline{I}^{\Sigma}$ over $\Shktoro$.
\end{proof}

\subsection{Affineness in the minimal compactification} 
\label{sec-eo-minimal-cpt}
Let $\pi: \Shktoro\to \Shkmino$ be the natural projection. For $w\in {}^I W$, define
\begin{align}
S^{\min}_w&:=\pi(S^{\Sigma}_w) \\
S^{\min,*}_w&:=\pi(S^{\Sigma,*}_w).
\end{align} 
By~\eqref{eq-zeta-triangles-toroidal} (with $\Kcal'=\Kcal$) the above definitions are invariant under a refinement $\Sigma_0 \subset \Sigma$ (corresponding again to a $\beta$ which is log integral (\S\ref{sec-univ-semi-abel})). Hence $S^{\min}_w$ and $S^{\min,*}_w$ are independent of $\Sigma$.
Part~\ref{item-min-cpt-strata-affine} of the following proposition is \Cor~\ref{cor-affine-noncpt}.
\begin{proposition} \label{prop-min-cpt-strata}
\ 
\begin{enumerate}[label=(\alph*)]
\item 
\label{item-min-cpt-strata-disjoint} 
The $S^{\min}_w$ are pairwise disjoint and locally closed. 
\item \label{item-stein} The Stein factorization of $\pi:\overline{S}^{\Sigma}_w\to \overline{S}^{\min}_w$ is given by $\pi=f\circ g$ where $g:\overline{S}^{\Sigma}_w\to T$ is proper with connected fibers such that $g_{*}\Ocal_{\overline{S}^{\Sigma}_w}=\Ocal_T$, and $f:T\to \overline{S}^{\min}_w$ is a finite morphism which is a universal homeomorphism.
\item \label{item-min-cpt-strata-affine} The stratum $S^{\min}_w$ is affine.
\end{enumerate}
\end{proposition}

\begin{rmk}\label{remarkminimal}  
 \Prop~\ref{prop-min-cpt-strata}\ref{item-min-cpt-strata-disjoint} affords a decomposition of $\Shkmino$ into disjoint, locally closed subsets:
\begin{equation}\label{decompmin}
\Shkmino=\bigsqcup_{w\in {}^I W}S^{\min}_w.
\end{equation} 
For Siegel-type Shimura varieties, it follows from \cite[\S5]{Ekedahl-Geer-EO} that \eqref{decompmin} is a stratification; for  PEL Shimura varieties of type A and C this, as well as the fact that $h^{\Sigma}_w$ descends to  $\overline{S}^{\min}_w$ is proved in \cite[\Th~6.1.6, \Th~6.2.3]{Boxer-thesis-arxiv}. In the general Hodge case, these properties should follow from \cite{Lan-Stroh-stratifications-compactifications}.
\end{rmk}

\subsubsection{Comparison with  Ekedahl-van der Geer \cite{Ekedahl-Geer-EO}}
We shall first prove \Prop~\ref{prop-min-cpt-strata}\ref{item-min-cpt-strata-disjoint} in the Siegel case and then reduce the general case to the Siegel one. For the Siegel case, we need to compare the extension of the EO stratification to $S_{g, \tilde\Kcal}^{\tilde \Sigma}$ afforded by  \Th~\ref{th-tor-ext-Gzip} with the extension defined in \cite[\S5]{Ekedahl-Geer-EO}.

Recall that the Weyl group $W_g$ of $GSp(2g)$ is realized concretely as a subgroup of the symmetric group $S_{2g}$ via $$W_g=\{w \in S_{2g} \ | \ w(j)+w(2g+1-j)=2g+1 \textnormal{ for all } 1\leq j \leq g\}.$$
Let $\rho_i:W_{g-i} \to W_g$ be the natural embedding, defined  for $w' \in W_{g-i}$ by $\rho_i(w')(j)=i+w'(j)$ for $1 \leq j \leq g-i$ and $\rho_i(w')(j)=g+j$ for all $g-i+1 \leq j \leq g$. It satisfies  $\rho_i(W_{g-i}) \cap {}^IW_g=\rho_i({}^IW_{g-i})$.

Every $k$-point $ x \in S_{g, \tilde \Kcal}^{\tilde \Sigma}$ admits an underlying semi-abelian scheme $\tilde{A}$, which is an extension
\begin{equation}
\label{eq-semi-abelian-def}
1 \to T \to \tilde{A} \to A \to 1
\end{equation}
 of an abelian scheme $A$ by a torus $T$.
Define $S_{g, \tilde \Kcal}^{\langle i \rangle}$ to consist of $ x \in S_{g, \tilde \Kcal}^{\tilde \Sigma}$ such that the dimension of the torus part is $i=\dim(T)$.   
 The EO stratification of $S_{g, \tilde \Kcal}$ is extended to $S_{g, \tilde \Kcal}^{\tilde \Sigma}$ in  \cite[\S5]{Ekedahl-Geer-EO} by defining the stratum $'S_{g,w}^{\langle i \rangle} \subset S_{g, \tilde \Kcal}^{\langle i \rangle}$ corresponding to $w \in {}^I W_g$ to consist of $ x \in S_{g, \tilde \Kcal}^{\langle i \rangle}$ such that, if the EO stratum of the abelian part is given by $w' \in {}^I W_{g-i}$, then $\rho_i(w')=w$. Then define $'S_{g,w}^{\tilde \Sigma}$ to be the union of the $'S_{g,w}^{\langle i \rangle}$ over all $i$.

\begin{lemma}[Comparison] 
\label{lem-compare-EvdG-zeta-tor} Let $S_{g,w}^{\tilde \Sigma}$ be the EO strata defined  by pulling back the  stratification of $\GspZip^{\mu_g}$ along $\zeta_{\tilde g, \Kcal}^{\tilde \Sigma}:S_{g, \tilde \Kcal}^{\tilde \Sigma} \to \GspZip^{\mu_g}$. Then
 $'S_{g,w}^{\tilde \Sigma}= S_{g,w}^{\tilde \Sigma}$.
 \end{lemma}
\begin{proof}
It suffices to prove that the two extensions of the EO stratifiation agree one boundary stratum at a time, \ie to show that $'S_{g,w}^{\tilde \Sigma} \cap S_{g , \tilde \Kcal}^{\langle i \rangle} =S_{g,w}^{\tilde \Sigma} \cap S_{g ,\tilde \Kcal}^{\langle i \rangle}$ for all $i$.  As in \cite[4.3.1]{MadapusiHodgeTor}, there is a universal, polarized $1$-motive over $S_{g ,\tilde \Kcal}^{\langle i \rangle}$; let $H_{\dR}^{1, \langle i \rangle}$ denote its (contravariant) de Rham realization (\loccitn, 1.1.3 and \cite[4.3]{andreatta-viale-crystalline-1-motives}). It is a locally free sheaf of rank $2g$ equipped with a non-degenerate symplectic pairing, induced from the polarization of the $1$-motive. By the characterization of $H_{\dR}^{1, \langle i \rangle}$ as a canonical extension in \cite[4.3.1]{MadapusiHodgeTor} and the characterization of $H_{\dR}^{1,\can}(A_g/\Shgko)=H^1_{\logdr}(\overline{A}_g/S_{g, \tilde \Kcal}^{\tilde \Sigma})$ in \cite[\Th~2.15(3)(d)]{Lan-toroidal-kuga}, the restriction of $H^1_{\logdr}(\overline{A}_g/S_{g, \tilde \Kcal}^{\tilde \Sigma})$ to $S_{g ,\tilde \Kcal}^{\langle i \rangle}$ is $H_{\dR}^{1, \langle i \rangle}$.

The de Rham realization $H^{1, \langle i \rangle}_{\dR}$ admits a $3$-step weight filtration by locally free sheaves 
\begin{equation}
\label{eq-weight-filtration}    
(0) \subset  W_0H^{1, \langle i \rangle}_{\dR} \subset W_1 H^{1, \langle i \rangle}_{\dR} \subset W_2 H^{1, \langle i \rangle}_{\dR}=H^{1, \langle i \rangle}_{\dR}.
\end{equation}
The weight filtration is symplectic in the sense that the orthogonal $(W_jH^{1, \langle i \rangle}_{\dR})^{\perp}$ of $W_jH^{1, \langle i \rangle}_{\dR}$ is $W_{2-j}H^{1, \langle i \rangle}_{\dR}$. Let $\Gr^W_j:=\Gr^W_jH^{1, \langle i \rangle}_{\dR}$ denote the associated graded. The symplectic pairing on $H^{1, \langle i \rangle}_{\dR}$ induces non-degenerate pairings on the complementary graded pieces, \ie \begin{equation}
\label{eq-graded-pairings}
\Gr^W_1 \times \Gr^W_1 \to \Ocal_{S_{g ,\tilde \Kcal}^{\langle i \rangle}} \textnormal{ and }  \Gr^W_0 \times \Gr^W_2 \to \Ocal_{S_{g ,\tilde \Kcal}^{\langle i \rangle}}.
\end{equation}

The middle graded piece $\Gr^W_1$ is given by the de Rham cohomology of the abelian part of the universal $1$-motive, \ie $\Gr^W_1=H^1_{\dR}(A_{g-i}/S_{g-i, \overline{\Kcal}})$, where $\overline{\Kcal} \subset GSp(2(g-i),\AA_f)$ is the corresponding level subgroup and as before $A_{g-i}$ denotes the universal abelian scheme over $S_{g-i, \overline{\Kcal}}$.
By definition, a polarization of a $1$-motive induces one of its abelian part \cite[1.1.1]{MadapusiHodgeTor}, so the pairing on $\Gr^W_1=H^1_{\dR}(A_{g-i}/S_{g-i, \overline{\Kcal}})$ induced from $H^{1, \langle i \rangle}_{\dR}$ agrees with the one induced from the polarization of $A_{g-i}/S_{g-i, \overline{\Kcal}}$.

Consider the fiber of~\eqref{eq-weight-filtration} over a $k$-point $x \in S_{g, \tilde \Kcal}^{\langle i \rangle}$ with underlying semi-abelian scheme $\tilde{A}$~\eqref{eq-semi-abelian-def}. We get a filtration of $k$-vector spaces stable by $F$ and $V$, viewed as $\sigma$-linear and $\sigma^{-1}$-linear maps respectively.  We claim that there is a (non-canonical) splitting which is stable under both $F$ and $V$. Set $H^{1, \langle i \rangle}_{\dR,x}:=H^{1, \langle i \rangle}_{\dR} \otimes k(x)$,  $W_{j,x}:=W_jH^{1, \langle i \rangle}_{\dR} \otimes k(x)$ and $\Gr^W_{j,x}:=\Gr^W_j \otimes k(x)$. The action of $F$ (resp. $V$) on $\Gr^W_{0,x}$ (resp. $\Gr^W_{2,x}$) is invertible (\cf \cite[4.3]{andreatta-viale-crystalline-1-motives}). 

Recall the following elementary fact of $\sigma$-linear algebra: Assume $N$ is a $k$-vector space, $f:N \to N$ is a $\sigma$-linear map and $N'$ is a subspace stable by $f$. If $f$ is invertible on either $N'$ or the quotient $N/N'$, then there is a (noncanonical) $f$-stable splitting $N= N' \oplus N/N'$. 

Applying the $\sigma$-linear algebra fact with $N=W_{1,x}$, $N'=W_{0,x}$ and $f=F$ gives an $F$-stable complement $\widetilde \Gr^W_{1,x}$ to $W_{0,x}$ in $W_{1,x}$. Since the first pairing in~\eqref{eq-graded-pairings} is perfect and also induced from that of $H^{1, \langle i \rangle}_{\dR}$, the restriction of the pairing on $H^{1, \langle i \rangle}_{\dR,x}$ to $\widetilde \Gr^W_{1,x}$ is perfect. Therefore one has 
\begin{equation}
\label{eq-orthogonal-decomposition}
H^{1, \langle i \rangle}_{\dR,x}=\widetilde \Gr^W_{1,x} \oplus (\widetilde \Gr^W_{1,x})^{\perp}. 
\end{equation} 
We claim that both summands in~\eqref{eq-orthogonal-decomposition} are both $F$-stable and $V$-stable.
Since $F,V$ are transpose to each other, the orthogonal $(\widetilde \Gr^W_{1,x})^{\perp}$ is $V$-stable. Further, $W_{1,x} \cap (\widetilde \Gr^W_{1,x})^{\perp}=W_{0,x}$, so $(\widetilde  \Gr^W_{1,x})^{\perp}/W_{0,x}=\Gr^W_{2,x}$. Since $V$ is invertible on $\Gr^W_{2,x}$, there exists a $V$-stable complement $\widetilde \Gr^W_{2,x}$ to $W_{0,x}$ in $(\Gr^W_{1,x})^{\perp}$. The relation $FV=0$ on $H^{1, \langle i \rangle}_{\dR,x}$ implies that $F$ is zero on $\widetilde \Gr^W_{2,x}$. Since $W_{0,x}$ is $F$-stable, we conclude that $(\widetilde \Gr^W_{1,x})^{\perp}$ is $F$-stable. Applying the transpose relation again gives that $\widetilde \Gr^W_{1,x}$ is $V$-stable. 

Therefore the weight filtration~\eqref{eq-weight-filtration} at the point $x$ admits a (noncanonical) splitting 
\begin{equation}
\label{eq-splitting-weight-filtration}
H_{\dR,x}^{1, \langle i \rangle} =\widetilde \Gr^W_{1,x} \oplus (W_{0,x} \oplus \widetilde \Gr^W_{2,x})
\end{equation}
stable by $F$ and $V$ and compatible with the pairing on $H^{1, \langle i \rangle}_{\dR,x}$.
In other words~\eqref{eq-splitting-weight-filtration} shows that the symplectic similitude $F$-Zip $H^{1, \langle i \rangle}_{\dR,x}$ is the sum of the two symplectic similitude $F$-Zips 
$\widetilde \Gr^W_{1,x}$ and $W_{0,x} \oplus \widetilde \Gr^W_{2,x}=(\widetilde \Gr^W_{1,x})^{\perp}$. The first $\widetilde \Gr^W_{1,x}$ is isomorphic to the symplectic similitude $F$-zip attached to the abelian part $A$ and the second $W_{0,x} \oplus \widetilde \Gr^W_{2,x}$ is ordinary. It now follows easily that $x \in S_{g,w}^{\tilde \Sigma}$ if and only if the element $w' \in {}^IW_{g-i}$ classifying the EO stratum of the abelian part $A$ satisfies $\rho_i(w')=w$.
\end{proof}
\begin{rmk}
\label{rmk-comparison-EO-tor}
The proof of Lemma~\ref{lem-compare-EvdG-zeta-tor} is similar to \cite[Lemmas 3.2.6, 3.4.3]{Lan-Stroh-stratifications-compactifications} which uses the language of Raynaud extensions instead of $1$-motives. Lemma~\ref{lem-compare-EvdG-zeta-tor} is also implicit in \cite[\S5]{Ekedahl-Geer-EO} but it seems to us that the arguments in \cite[\S5]{Ekedahl-Geer-EO} may be incomplete. We thank the referee for suggesting to us that there might be an issue with \cite[\S5]{Ekedahl-Geer-EO}.
\end{rmk}

\begin{proof}[Proof of \textnormal{\Prop~
\ref{prop-min-cpt-strata}:}]
We prove that the $S_w^{\min}$ are disjoint and locally closed by first explaining why it is true in the Siegel case and then reducing to that case. 
Since  
$ \pi$ maps a semi-abelian $k$-scheme $\tilde{A}$ to its abelian part $A$, in view of the definition of the $'S_w^{\tilde \Sigma}$ recalled above, 
the disjointness of the images $\pi('S_w^{\tilde \Sigma})$ is just the injectivity of $\rho_i:W_{g-i} \to W_g$. 
Thus the disjointness of the $S_w^{\min}$ in the Siegel case follows from the comparison Lemma~\ref{lem-compare-EvdG-zeta-tor}.

Let $\mu_g:\GG_{m,k}\to GSp(2g)$ denote the cocharacter $\varphi\circ \mu$ and $\bar \varphi^{\rm zip}:\GZip^\mu\to \GspZip^{\mu_g}$ the induced morphism of stacks. 
Write $\bar \varphi^{\Sh}, \bar \varphi^{\Sigma/\tilde \Sigma}, \bar \varphi^{\min}$ for the special fiber of $\varphi^{\Sh}, \varphi^{\Sigma/\tilde \Sigma}, \varphi^{\min}$. Combining \S\S\ref{sec-toroidal-review}-\ref{sec-minimal-review} with \Th~\ref{th-tor-ext-Gzip} gives rise to the commutative diagram:

$$\xymatrix{
& \GZip^\mu \ar[rr]^{\bar \varphi^{\rm zip}} && \GspZip^{\mu_g} \\
& \Shktoro \ar[u]_{\zeta_{\Kcal}^{\Sigma}} \ar[rr]^{\bar \varphi^{\Sigma/\tilde \Sigma}} \ar[dd]^(.3){\pi}|!{[d];[rr]}\hole && S^{\tilde \Sigma}_{g, \tilde \Kcal} \ar[u]_{\zeta_{g, \tilde \Kcal}^{\tilde \Sigma}} \ar[dd]^{\pi_{g}} \\
\Shko \ar[ru] \ar[rd] \ar[rr]^(.3){\bar \varphi^{\rm Sh}} && S_{g, \tilde \Kcal} \ar[ru] \ar[rd] \\
& \Shkmino  \ar[rr]^{\bar \varphi^{\rm min}}  && S^{\rm min}_{g, \tilde \Kcal} }$$

Let $x\to \Shkmino$ be a geometric point, and let $X:=\pi^{-1}(x)$ be its fiber in $\Shktoro$. 
Note that $X$ is connected. Combining the disjointness of the strata of $S^{\min}_{g, \tilde \Kcal}$ in the Siegel case with the commutativity of the diagram, this implies that $\bar \varphi^{\Sigma/\tilde \Sigma}(X)$ is contained in a single EO stratum of $S^{\tilde \Sigma}_{g, \tilde \Kcal}$. Hence $\zeta_{\Kcal}^{\Sigma}(X)$ is a connected subset of $\GZip^\mu$ that maps to a single point by $\bar \varphi^{\rm zip}$. Since $\bar \varphi^{\rm zip}$ has discrete fibers by \Th~\ref{discrete_fibers}, we deduce that $\zeta_{\Kcal}^{\Sigma}(X)$ is a singleton, hence $X$ is contained in a single EO stratum. This shows that
\begin{equation}
\pi^{-1}(S^{\min}_w)=S^{\Sigma}_w  \mbox{ and } 
\pi^{-1}(S^{\min,*}_w)=S^{\Sigma,*}_w.\label{fiberstormin}
\end{equation}
In particular, the $S^{\min}_w$ are pairwise disjoint. 

Next we show that the $S^{\min}_w$ are locally closed. Since $\zeta^{\Sigma}_{\Kcal}$ is continuous, $S_w^{\Sigma, *}$ is closed in $S_{\Kcal}^{\Sigma}$. The complement of $S_w^{\Sigma}$ in $S_w^{\Sigma,*}$ is also closed, as it is the finite union of the $S_{w'}^{\Sigma,*}$ for all $w'\preccurlyeq w$ in ${}^IW$. Since $\pi$ is proper, the images $\pi(S_w^{\Sigma, *})$ and $\pi(S_w^{\Sigma,*}\setminus S_w^{\Sigma})$ are both closed in $S_{\Kcal}^{\min}$. Since the $S_w^{\min}$ are disjoint, $\pi(S_w^{\Sigma})=S_w^{\min}$ is the complement of $\pi(S_w^{\Sigma,*}\setminus S_w^{\Sigma})$ in $\pi(S_w^{\Sigma,*})$. Hence $\pi(S_w^{\Sigma})=S_w^{\min}$ is locally closed.

The Stein factorization of the map $\pi:\overline{S}^{\Sigma}_w\to \overline{S}^{\min}_w$ gives a scheme $T$ with a finite map $f:T\to \overline{S}^{\min}_w$, such that $\pi$ factors through a proper map $g:\overline{S}^{\Sigma}_w\to T$ with connected fibers, satisfying $g_{*}\Ocal_{\overline{S}^{\Sigma}_w}=\Ocal_T$. By \eqref{fiberstormin}, we see that $\pi$ has connected fibers, hence $f$ has connected fibers. It follows that $f$ is a universal homeomorphism. This proves \ref{item-stein}.

Finally, we prove \ref{item-min-cpt-strata-affine}. Denote again by $\omega$ the line bundle $f^*\omega$ on $T$. Note that $\omega$ is again ample on $T$. Since $g_{*}\Ocal_{\overline{S}^{\Sigma}_w}=\Ocal_T$, it follows from the projection formula that $g_{*}\omega=\omega$. In particular, the section $h_w^\Sigma$ descends to a section $h_T\in H^0(T,\omega^{N_w})$. The non-vanishing locus $U\subset T$ of $h_T$ satisfies $g^{-1}(U)=S^{\Sigma}_w$. Using again \eqref{fiberstormin}, we deduce $U=f^{-1}(S^{\min}_w)$. Since $\omega$ is ample on $T$, the open $U$ is affine, and hence $S^{\min}_w=f(U)$ is affine by a theorem of Chevalley.
\end{proof}

\subsection{The length stratification of Hodge-type Shimura varieties}
\label{sec-length-hodge-type}
We apply \S\ref{sec-length-strata} to $\zeta_{\Kcal}^{\Sigma} :S_{\Kcal}^{\Sigma}\to \GZip^\mu$. Write $S^{\Sigma}_j$ and $S^{\Sigma,*}_j$ for the length strata in $S_{\Kcal}^{\Sigma}$. 

Assumption \ref{item-length-equidim} of \Prop~\ref{prop-length-strata} follows from \cite[\Th~2]{MadapusiHodgeTor}. Assumption \ref{item-length-nonempty} was proved in the PEL case in \cite[\Th~2]{Viehmann-Wedhorn}, and has recently been generalized to general Hodge-type Shimura varieties, see \cite{Yu-basic-locus-non-empty,Kisin-Honda-Tate-theory-Shimura-varieties,Lee-newton-strata-nonempty}. This also follows from \cite{Vasiu-Manin-problems} using a different language. As for Assumption \ref{item-length-strata3}: 
\begin{lemma} 
\label{lem-zero-dim-pel}
Suppose $\varphi$ \textnormal{(\S\ref{sec-integral-symplectic-embed})} is a PEL embedding . Then $S_e^{\Sigma}$ intersects the boundary trivially, \ie $S_e=S_e^{\Sigma}$. In particular, $S_e^{\Sigma}$ is zero-dimensional.  \end{lemma}
\begin{proof}

The lemma is trivial when $\gx$ is of compact type, so assume it is not.
By \cite[Lemma 3.2(b)]{Baily-Borel}, this implies that $\GG^{\ad}(\RR)$ has no compact factors. 
Since $S_e \subset S_e^{\Sigma}$, 
the latter is nonempty as well. 
The image $\varphi^{\Sigma/\tilde \Sigma}(S_e^{\Sigma}) \subset S_{g, \tilde \Kcal}^{\tilde \Sigma}$ is contained in a unique Siegel stratum $S_{g,w}^{\tilde \Sigma}$ (for a unique $w \in {}^IW_g$). So it suffices to show that $S_{g,w}^{\tilde \Sigma}$ does not meet the boundary of $S_{g, \tilde K}^{\tilde \Sigma}$. Recall that the multiplicative rank of a group scheme $H$ of $p$-power order is $\mult \rk(H):=\dim \Hom (\mmu_p, H)$. The desired inclusion $S_{g,w}^{\tilde \Sigma} \subset S_{g,\tilde \Kcal}$ follows from the following claims:    \begin{enumerate}[label=(\alph*)]
\item \label{item-p-rank-semi-abelian}
For every boundary point $x$ of $S_{g, \tilde \Kcal}^{\tilde \Sigma} \setminus S_{g, \tilde \Kcal}$, the underlying semi-abelian variety $\tilde{A}$ \eqref{eq-semi-abelian-def} has $\mult \rk \tilde{A}[p]>0$.
\item \label{item-p-rank-constant-along-strata}
The multiplicative rank is constant along the strata $S_{g,w}^{\tilde \Sigma}$ for $w \in W_g$.
\item \label{item-p-rank-zero-se}
The stratum $S_{g,w}^{\tilde \Sigma}$ containing $S_e^{\Sigma}$ has multiplicative rank $0$.
\end{enumerate}
Claim~\ref{item-p-rank-semi-abelian} is clear: A boundary point $x \in S_{g, \tilde \Kcal}^{\tilde \Sigma}\setminus S_{g, \tilde \Kcal}$ necessarily has a nontrivial torus part $T$; if the torus part has dimension $\dim T=j>0$, one has $\mult \rk \tilde{A} \geq j$. 

Now consider \ref{item-p-rank-constant-along-strata}. It follows from \cite[\S15]{Mumford-Abelian-Varieties-book} that $\mult \rk \tilde {A}$ is the semisimple rank of $V: H^0(\tilde{A}, \Omega^1_{\tilde{A}}) \to H^0(\tilde{A}, \Omega^1_{\tilde{A}_x})^{(p)}$, since $V$ induces an isomorphism on the differentials of the torus part $T$. In view of the transpose relation between $F$ and $V$, $s$ is also the semisimple rank of $F:(\ker V)^{(p)}\to \ker V$.  Since the semisimple rank is equal to the stable rank (\ie the rank of all sufficiently large powers of $F$), one sees that $s$ is an invariant of the $GSp(2g)$-Zip $I^{\tilde \Sigma}_x$ associated to $x$ by \Th~\ref{th-tor-ext-Gzip}.

To prove \ref{item-p-rank-zero-se}, by \ref{item-p-rank-constant-along-strata} it is enough to show that the multiplicative rank of the underlying abelian variety of a point of $S_e$ is zero, under the assumption that $\GG^{\ad}(\RR)$ has no compact factors. This can be checked case-by-case using Moonen's ``standard objects'' \cite[\S\S4.9,5.8]{moonen-gp-schemes}. (Given $w \in \iw$, the associated standard object is an explicit representative of the isomorphism class of the $G$-Zips of type $w$.)
\end{proof}
It is expected that Lemma~\ref{lem-zero-dim-pel} remains true for an arbitrary Shimura variety of Hodge type. For now we single this out as a condition; it will play an important role in \S\S\ref{sec-fact-hecke},\ref{sec-galois-reps}.
\begin{condition} \label{cond-zero-dim}
The minimal EO stratum $S_e^{\Sigma}$ has dimension zero. 
\end{condition}
\begin{rmk}
\label{rmk-zero-dimensional-EO-stratum}
As in Lemma~\ref{lem-zero-dim-pel}, Condition~\ref{cond-zero-dim} holds when $S_e^{\Sigma}$ does not meet the boundary, \ie when $S_e=S_e^\Sigma$.
\end{rmk}
Putting together \Props~\ref{prop-hj},~\ref{prop-length-strata} and Lemma~\ref{lem-zero-dim-pel} gives the following result on length strata of Hodge-type Shimura varieties and their Hasse invariants:
\begin{corollary}\label{cor-length-shimura}
Assume $\gx$ is of PEL-type, or that $S_{ \Kcal}^{ \Sigma}$ satisfies \textnormal{Condition \ref{cond-zero-dim}}. Then:
\begin{enumerate}[label=(\alph*)]
\item The length strata $S^{\Sigma}_j$ and $S^{\Sigma,*}_j$ are equi-dimensional of dimension $j$, and $S^{\Sigma}_j$ is open dense in $S^{\Sigma,*}_j$.
\item For $w\in {}^I W$, $S^{\Sigma}_w$ is equi-dimensional of dimension $\ell(w)$.
\item For every $j$, there exists $N_j \geq 1$ and a $\gofafp$-equivariant $h_j \in H^0(S_j^{\Sigma,*}, \omega^{N_j})$ satisfying $\nonvanish (h_j)=S_{j}^{\Sigma}$.  
\end{enumerate} 
\end{corollary}
We call the $h_j$ the length Hasse invariants of $S^{\Sigma}_{\Kcal}$.
\section{Geometry of Hasse regular sequences} 
\label{sec-vanish-strata}

\subsection{Cohomological vanishing on $\Shktorn$} \label{sec-vanishing-on-shktor}

The following variant of "cohomology and base change" will be used to reduce vanishing statements to the special fiber.

\begin{lemma}\label{vanishn}
Let $(R,\mfr)$ be a Noetherian local ring, $f:X\to \spec R$ a proper morphism, and $\Fcal$ a coherent $\Ocal_X$-module, flat over $R$. Denote by $x$ the point $\mfr\in \spec R$. Assume that $H^i(X_x,\Fcal_x)=0$ for $i\geq 0$. Then $H^i (X,\Fcal)=0$.

In particular, $H^i (X_n,\Fcal_n)=0$ for all $n\geq 1$, where $X_n=X\times_R R/\mfr^n$ and $\Fcal_n$ is the pullback of $\Fcal$ via $X_n\to X$.
\end{lemma}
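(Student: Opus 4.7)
The plan is a standard two-step application of semicontinuity together with cohomology and base change, followed by descent to the infinitesimal thickenings $X_n$. I assume throughout that $R$ is Noetherian (as is implicit in the application and in the hypotheses of the cited tools). Note that the hypothesis $H^i(X_x,\Fcal_x)=0$ for all $i \geq 0$ forces $\Fcal_x=0$ on fibers since $H^0$ detects sections.

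First, I would invoke the semicontinuity theorem (\cite[III.12.8]{hartshorne}) for the proper, smooth morphism $f:X\to \spec R$ and the coherent sheaf $\Fcal$, flat over $R$. For each $i\geq 0$, the function $y\mapsto \dim_{k(y)} H^i(X_y,\Fcal_y)$ is upper semicontinuous on $\spec R$. By assumption it vanishes at the closed point $x$, and since any open neighborhood of the closed point of a local ring is all of $\spec R$, we deduce $H^i(X_y,\Fcal_y)=0$ for every $y\in \spec R$ and every $i\geq 0$.

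Second, I would apply Grauert's theorem / cohomology and base change (\cite[III.12.9, III.12.11]{hartshorne}) in descending induction on $i$. Starting from $i$ larger than the relative dimension of $f$, where $R^if_*\Fcal=0$ for dimension reasons, the surjectivity of the base change map $\phi^i(y):R^if_*\Fcal\otimes k(y)\to H^i(X_y,\Fcal_y)=0$ is automatic and hence $\phi^{i-1}(y)$ is surjective for all $y$; then since the target $H^{i-1}(X_y,\Fcal_y)=0$, Nakayama applied to the coherent sheaf $R^{i-1}f_*\Fcal$ on the local scheme $\spec R$ gives $R^{i-1}f_*\Fcal=0$. Iterating downward yields $R^if_*\Fcal=0$ for all $i\geq 0$. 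Because $\spec R$ is affine, the Leray spectral sequence (or direct computation) then gives $H^i(X,\Fcal)=H^0(\spec R, R^if_*\Fcal)=0$.

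For the ``in particular'' part, I would observe that the morphism $f_n:X_n\to \spec R/\mfr^n$ is again proper and smooth, that $\Fcal_n=\Fcal\otimes_R R/\mfr^n$ is coherent and flat over $R/\mfr^n$ (flatness is preserved by arbitrary base change, and coherence by base change along a map of Noetherian rings), and that its closed fiber coincides with $\Fcal_x$, so $H^i((X_n)_x,(\Fcal_n)_x)=0$ for all $i\geq 0$. Hence the first part of the lemma applied to $(f_n,\Fcal_n,R/\mfr^n)$ yields $H^i(X_n,\Fcal_n)=0$.

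I expect the only point requiring attention to be the Noetherian hypothesis on $R$ (needed to invoke the classical semicontinuity and base change machinery); there is no deeper obstacle, as the argument is a textbook application of these theorems.
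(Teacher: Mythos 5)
Your argument is correct and is essentially the paper's: the whole content is that each base-change map $\varphi^i(x)\colon R^if_*\Fcal\otimes k(x)\to H^i(X_x,\Fcal_x)$ is trivially surjective because its target vanishes, hence an isomorphism by cohomology and base change, so $R^if_*\Fcal\otimes k(x)=0$, and Nakayama (using coherence of $R^if_*\Fcal$ from properness, plus the fact that $\spec R$ is affine and local) gives $H^i(X,\Fcal)=0$; the $R/\mfr^n$ statement then follows by applying this to $X_n\to \spec R/\mfr^n$, exactly as in the paper. Two remarks: the semicontinuity step and the descending induction are superfluous, since surjectivity of $\varphi^i(y)$ is automatic in every degree once the fiber cohomology vanishes, and in your inductive step the conclusion $R^{i-1}f_*\Fcal\otimes k(y)=0$ does not follow from surjectivity onto a zero target alone---at that point you must invoke the surjective-implies-isomorphism part of cohomology and base change (which you cite, and which is the single statement the paper applies directly at the closed point). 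Finally, your parenthetical claim that the hypothesis forces $\Fcal_x=0$ is false---$\Ocal_{\mathbf P^1}(-1)$ has vanishing cohomology in every degree without being zero---but this aside plays no role in the proof.
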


\begin{proof}
The base change map $\varphi^i(x):R^i f_*(\Fcal)_x \otimes k(x) \longrightarrow H^i(X_x, \Fcal_x)$ is surjective, because its target is zero by assumption. By ``cohomology and base change'' (see \cite[\Th~12.11(a)]{Hartshorne-Alg-Geom} in the projective case and \cite[\S7]{EGAIII.2} in general), the map $\varphi^i(x)$ is an isomorphism. Hence $R^i f_*(\Fcal)_x \otimes k(x)=0$. Since $\spec R$ is affine, $R^i f_*(\Fcal)=H^i(X,\Fcal)^\sim$, and  $H^i(X,\Fcal)$ is a finite type $R$-module since $f$ is proper. So the first statement follows from Nakayama's lemma.

The second part of the lemma follows from the first one applied to the base change $X_n \to \spec R/\mfr^n$.
\end{proof}

The next condition on the vanishing of higher direct images is key to studying cohomological vanishing on $\Shktorn$ and its subschemes. Recall that $S_{\Kcal}^{\Sigma}:=\Sscr_{\Kcal}^{\Sigma,1}$. Denote by $\pi:\Shktoro \to \Shkmino$ the natural map.
\begin{condition} 
\label{cond-hdi} Let $\eta \in \chargpldom$.  The pair $(\Sscr_{\Kcal}^{\Sigma},\eta)$ satisfies $R^i\pi_* \vsubeta=0$ for all $i>0$.  
\end{condition}
\begin{rmk} 
\label{rmk-hdi}
In the PEL case, Condition~\ref{cond-hdi} is a theorem of Lan \cite[Theorem 8.2.1.2]{Lan-Ordinary-Loci}. See also \cite{Lan-Stroh-HDI} for a simpler argument under a mild additional hypothesis on $p$.
In the general Hodge case, ~\ref{cond-hdi} is a theorem of Stroh\footnote{Stroh's result is unpublished, but it follows directly from the combination of \cite{Lan-Stroh-HDI} and \cite{Lan-Suh-lifting-cusp-forms}.} in \cite{Stroh-relative-cohomology-cuspidal-forms} when $\eta \in \mathbf Q \eta_{\omega}$. 

\end{rmk}

Recall the convention regarding notation established in \S\ref{sec-reduct-mod-pn}: $\Sscr_{\Kcal}^{\Sigma, +}:=\Sscr_{\Kcal}^{\Sigma}$ and $\Sscr_{\Kcal}^{\Sigma, 0}$ is its generic fiber.
\begin{lemma}
\label{lem-tor-min-v1} 
Assume $(S_{\Kcal}^{\Sigma},\eta)$ satisfies \textnormal{Condition~\ref{cond-hdi}}. Then there exists $m_0=m_0(\eta) \geq 1$ such that, for all $n \in \zgeqz \cup \{+\}$, all $m \geq m_0$ and all $i>0$, one has \begin{equation} H^i(\Shktorn, \mathcal V^{\sub}_{\eta} \otimes \omega^m)=0. \label{eq tor min v1} 
 \end{equation} 
 \end{lemma}

\begin{proof}  By cohomology and base change (Lemma~\ref{vanishn}), it suffices to prove~\eqref{eq tor min v1} when $n=1$. By Condition~\ref{cond-hdi} and the projection formula, the Leray spectral sequence degenerates for $\pi$ and $\vsubeta \otimes \omega^m$. Hence
\begin{equation} \label{eq tor min}
H^i(\Shktoro, \vsubeta  \otimes \omega^m)=H^i(\Shkmino, \pi_* (\vsubeta \otimes \omega^m)) =H^i(\Shkmino,  (\pi_* \vsubeta) \otimes \omega^m). 
\end{equation}
 Since $\pi$ is proper, $\pi_*\vsubeta$ is coherent. Since $\omega$ is ample on $\Shkmin$, \eqref{eq tor min} is zero for all sufficiently large $m$ by Serre's cohomological criterion for ampleness.\end{proof}

\begin{corollary} 
Assume $(\Shko, \eta)$ satisfies \textnormal{Condition~\ref{cond-hdi}}. Let $m_0=m_0(\eta)$ be as in  {\rm Lemma~\ref{lem-tor-min-v1}}. Then  \begin{equation} H^0(\Shktor, \vsubeta \otimes \omega^m) \longrightarrow H^0(\Shktorn, \vsubeta \otimes \omega^m) \label{eq reduction mod pn} \end{equation} is surjective for all $m \geq m_0$. \label{cor surjective mod pn} \end{corollary}

\begin{proof} Let $\varpi \in \Ocal_\pfr$ be a uniformizer. Multiplication by $\varpi^n$ yields a short exact sequence \begin{equation}
0  \longrightarrow  \mathcal O_{\pfr} \xrightarrow{\varpi^n} \mathcal O_{\pfr} \longrightarrow \mathcal O_{\pfr}/\pfr^n  \longrightarrow  0\label{eq reduction mod pn short seq} \end{equation}
and a long exact sequence of cohomology
\begin{equation}
\xymatrix@C=1pc{ 
H^0(\Shktor, \vsubeta \otimes \omega^m) \ar[r] & 
H^0(\Shktorn, \vsubeta \otimes \omega^m) \ar[r] & 
H^{1}(\Shktor, \vsubeta \otimes \omega^m) } \label{eq reduction mod pn long seq}. \end{equation} By Lemma~\ref{lem-tor-min-v1}, the right-hand term in~\eqref{eq reduction mod pn long seq} is zero when $m \geq m_0$. \end{proof}
\subsection{Hasse-regular sequences} 
\label{sec hasse-reg}
The next definition singles out those $\gofafp$-equivariant subschemes of a Shimura variety (modulo a prime power) which are obtained as successive zero schemes of length Hasse invariants (\S\ref{sec-length-strata}, \S\ref{sec-length-hodge-type}). 
Let $(\Sscr_{\Kcal}^{\Sigma,n})_{\Kcal^p, \Sigma}$ be the reduction modulo $\pfr^n$ of the tower introduced in \S\ref{sec-gofafp-toro}.

\begin{definition}
\label{def-hasse-reg-seq}
A  \emph{Hasse-regular sequence of  length} $r$ in $(\Shktorn)_{\Kcal^p, \Sigma}$ is the data, for each pair $(\Kcal^p, \Sigma)$ of a sequence  $(Z_j, a_j, f_j)_{j=0}^r$ satisfying: \begin{enumerate}[label=(\alph*)]
\item One has $Z_0=\Sscr_{\Kcal}^{\Sigma, n}$ and $(a_j)_{j=0}^r$ is a sequence of positive integers independent of $\Kcal^p,\Sigma$.
\item 
For all $j>0$, $Z_{j} \subset Z_{j-1}$ is a closed subscheme with $(Z_{j})_{\red}=S_{d-j}^{\Sigma,*}$, the $(d-j)$th length stratum \textnormal{(\S\ref{sec-length-hodge-type})}. 
\item For all $j$, $f_j\in H^0(Z_j,\omega^{a_j})$ is a section constructed by applying \textnormal{\Th~\ref{th-glue}} to the length Hasse invariant $h_{d-j}$ \textnormal{(\Cor~\ref{cor-length-shimura})}.
\item For all $j<r$, the zero scheme of $f_j$ is $Z_{j+1}$. \end{enumerate}
\end{definition}
\begin{rmk} Let $(Z_j, a_j, f_j)_{j=0}^r$ be a Hasse-regular sequence.
\label{rmk-hasse-reg}
\begin{enumerate}[label=(\alph*)]
\item Given $0 \leq s \leq r$, the truncation $(Z_j, a_j, f_j)_{j=0}^s$ is a Hasse regular sequence of length $s$.
\item
\label{item-hasse-reg-gofafp-equiv}
Since the length Hasse invariants $h_{d-j}$ are $\gofafp$-equivariant, it follows inductively from the uniqueness in \Th~\ref{th-glue}\ref{item-lift1} that the systems of $Z_j$ and $f_j$ in a Hasse-regular sequence are $\gofafp$-equivariant and compatible with refinement of rpcd $\Sigma$.
\item
\label{item-hasse-reg-dim}
By \Cor~\ref{cor-length-shimura}, one has $\dim Z_j=d-j$ for all $j$.
\item
\label{item-hasse-reg-hecke-action}
In \S\ref{sec-hecke-hasse-regular}, it is shown that the coherent cohomology of $Z_j$ admits an action of the unramified Hecke algebra $\Hcal$ (away from $p$), compatible with the inclusions $Z_j \hookrightarrow Z_{j-1}$. 
\item It is possible (and indeed happens in our arguments) that the last section $f_r$ is nowhere vanishing. 

\end{enumerate}
\end{rmk}
\begin{definition}
\label{def-Hasse-reg-subscheme}
A system of subschemes  $Z \subset \Sscr_{\Kcal}^{\Sigma,n}$ is {\em Hasse regular} if $Z=Z_r$ for some Hasse regular sequence $(Z_j, a_j, f_j)_{j=0}^r$.  
\end{definition}

%\begin{rmk} We shall sometimes denote $Z_j$ by $Z^{d_j}$ where $d_j=\dim Z_j$. This notation is unambiguous because no two subschemes pertaining to the same Hasse-regular sequence have the same dimension \label{rmk hecke reg dim notation} \end{rmk}

\begin{lemma} Let $(Z_j, a_j, f_j)_{j=0}^r$ be a Hasse-regular sequence in $\Shktorn$. Then $Z_j$ is Cohen-Macaulay for all $j$. 
\label{lem-cohen-macaulay-regular-sequence} 
\end{lemma}

\begin{proof} 
First note that the scheme $\Shktorn$ is Cohen-Macaulay by \cite[\Th~23.9]{Matsumura-commutative-ring-theory}, since the ring $\mathcal O_E/\pfr^n$ is Cohen-Macaulay and $\Shktorn$ is smooth over $\mathcal O_E/\pfr^n$.
Then the result follows from \cite[\Th~17.3]{Matsumura-commutative-ring-theory}, which implies that an effective Cartier divisor in a Cohen-Macaulay scheme is again Cohen-Macaulay.
\end{proof}

\begin{corollary}
\label{cor-hasse-reg-injective}
Let $(Z_j, a_j, f_j)_{j=0}^r$ be a Hasse-regular sequence in $\Shktorn$. Then $f_j$  is injective for all $j$.
\end{corollary}
\begin{proof} By Macaulay's "Unmixedness Theorem", a Cohen-Macaulay scheme has no embedded components, \cf \cite[\Cor~18.14]{eisenbud-commutative-algebra}. In view of Lemma~\ref{lem-cohen-macaulay-regular-sequence}, $Z_j$ has no embedded components. Then the result follows from \Th~\ref{th-glue}\ref{item-lift2} and \Cor~\ref{cor-length-shimura}.
\end{proof}

\subsection{Cohomological vanishing on Hasse-regular subschemes of $\Shktorn$} \label{sec coh van hasse-reg}

\begin{lemma}
\label{lem hecke regular vanishing} Suppose $\Zcal=(Z_j, a_j, f_j)_{j=0}^r$ is a Hasse-regular sequence and $\eta \in \chargpldom$. Assume $(\Shko, \eta)$ satisfies \textnormal{Condition~\ref{cond-hdi}}. Then there exists an integer $m_0$ such that, for all $i>0$, $m\geq m_0$, and $j\in \{0,...,r\}$, one has 
\begin{equation} H^i(Z_j, \mathcal \vsubeta \otimes \omega^m)=0 \label{eq strata v1}. \end{equation}  \label{lem vanishing for strata}\end{lemma}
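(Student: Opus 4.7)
The plan is to argue by induction on $j$. The base case $j=0$ is precisely the statement of Lemma~\ref{lem tor min v1}, applied to $Z_0=\Shktorn$: there exists $m_0^{(0)}$ such that $H^i(Z_0,\vsubeta\otimes\omega^m)=0$ for all $i>0$ and all $m\geq m_0^{(0)}$. This uses exactly one of the three hypotheses provided.

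For the inductive step, suppose the conclusion is known for $Z_j$ with threshold $m_0^{(j)}$, \ie $H^i(Z_j,\vsubeta\otimes\omega^{m'})=0$ for all $i>0$ and $m'\geq m_0^{(j)}$. The injective section $h_j\in H^0(Z_j,\omega^{a_j})$ with zero scheme $Z_{j+1}$ yields a short exact sequence of $\mathcal O_{Z_j}$-modules
\begin{equation*}
0\longrightarrow \mathcal O_{Z_j}\xrightarrow{\;h_j\;}\omega^{a_j}|_{Z_j}\longrightarrow \omega^{a_j}|_{Z_{j+1}}\longrightarrow 0.
\end{equation*}
Tensoring with the locally free sheaf $\vsubeta\otimes\omega^{m-a_j}|_{Z_j}$ preserves exactness, giving
\begin{equation*}
0\longrightarrow \vsubeta\otimes\omega^{m-a_j}|_{Z_j}\longrightarrow \vsubeta\otimes\omega^m|_{Z_j}\longrightarrow \vsubeta\otimes\omega^m|_{Z_{j+1}}\longrightarrow 0.
\end{equation*}
The associated long exact sequence in cohomology contains
\begin{equation*}
H^i(Z_j,\vsubeta\otimes\omega^m)\longrightarrow H^i(Z_{j+1},\vsubeta\otimes\omega^m)\longrightarrow H^{i+1}(Z_j,\vsubeta\otimes\omega^{m-a_j}).
\end{equation*}
Provided $m-a_j\geq m_0^{(j)}$ (equivalently $m\geq m_0^{(j)}+a_j$), the inductive hypothesis forces both outer terms to vanish for every $i>0$ (noting $i+1\geq 1$ as well). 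Hence the middle term vanishes, and we may set $m_0^{(j+1)}:=m_0^{(j)}+a_j$.

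Taking $m_0:=m_0^{(k)}=m_0^{(0)}+\sum_{j=0}^{k-1}a_j$ completes the induction and proves~\eqref{eq strata v1} uniformly for all $j\in\{0,\ldots,k\}$, all $i>0$, and all $m\geq m_0$. I do not see a genuine obstacle here: the main point is that an injective section furnishes an honest short exact sequence, which is guaranteed by condition (HRS2) in Definition~\ref{def hecke reg}; the Hecke-equivariance plays no role in the vanishing itself but is preserved by all maps involved. The only quantitative input is the shift by $a_j$ at each step, which is absorbed into the threshold $m_0$ since $k$ is finite.
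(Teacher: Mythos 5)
Your proof is correct and is essentially the paper's argument: induction along the Hecke-regular sequence, with the base case given by Lemma~\ref{lem tor min v1} and the inductive step using the short exact sequence from multiplication by the injective section $h_j$ together with the vanishing of the two outer terms in the resulting long exact sequence. The only difference is cosmetic bookkeeping (explicit thresholds $m_0^{(j)}=m_0^{(0)}+\sum a_j$ versus the paper's "for sufficiently large $r$").
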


\begin{proof} By induction on the length $r$. The base case $r=0$ is given by Lemma~\ref{lem-tor-min-v1}. For all $s \geq 0$, multiplication by $f_{r-1}$ and twisting by $\omega^s$ gives a short exact sequence of sheaves on $Z_{r-1}$:
$$0 \to \vsubeta \otimes \omega^s \to \vsubeta \otimes \omega^{a_{r-1}+s} \to \vsubeta \otimes \omega^{a_{r-1}+s}|_{Z_r}  \to  0.$$
 It induces a long exact sequence in cohomology:
\begin{equation} H^i(Z_{r-1}, \vsubeta \otimes \omega^{a_{r-1}+s}) \to H^i(Z_{r}, \vsubeta \otimes \omega^{a_{r-1}+s}) \to H^{i+1}(Z_{r-1}, \vsubeta \otimes \omega^s). \end{equation}
By induction, the extremal terms are zero for sufficiently large $s$, hence so is the middle term.
\end{proof}

\part{Hecke algebras and Galois representations} \label{part-galois}
Part~\ref{part-galois} contains our applications of group-theoretical Hasse invariants to the action of the Hecke algebra on coherent cohomology and the existence of `automorphic' Galois representations.
Our most fundamental application to coherent cohomology Hecke algebras is the factorization theorem (\Th~\ref{th-reduction-to-h0}, compare \Th~\ref{th-intro-factor}). \S\ref{sec-fact-hecke} is concerned with the statement and the majority of the proof of this result. The flag space of a Hodge-type Shimura variety is introduced in \S\ref{sec-flag-space}. It is used to complete the proof of \Th~\ref{th-reduction-to-h0} in \S\ref{sec-increased-regularity}. Our results on Galois representations are contained in \S\ref{sec-galois-reps}. The application to `Serre's letter to Tate' constitutes \S\ref{sec-serre-letter}.

\section{Construction of Hecke factorizations} \label{sec-fact-hecke}

\subsection{Hecke algebras} \label{sec-hecke-algebras}

\subsubsection{The abstract Hecke algebra}
\label{sec-hecke-abstract}
 Fix a prime $p \not \in \Ram(\GG)$ (\S\ref{sec-def-ramification}).
For every $v \not \in \Ram(\GG) \cup \{p\}$, let $\Kcal_v$ be a hyperspecial subgroup of $\gofqv$. 
Let $\Hcal_v=\Hcal_v(\gv, \Kcal_v;\ZZ_p)$ be the unramified (or spherical) Hecke algebra of $\GG$ at $v$, with $\zp$-coefficients, normalized by the unique Haar measure which assigns the hyperspecial subgroup $\Kcal_v$ volume $1$. 
Define the unramified, global Hecke algebra by \begin{equation}
\label{eq-def-hecke-alg}
\Hcal=\Hcal(\GG)= \bigotimes_{v \not \in \Ram(\GG) \cup\{p\}}\mathcal H_v  \hspace{2cm} \mbox{(restricted tensor product).} \end{equation}

\subsubsection{Systems of Hecke eigenvalues}
\label{sec-hecke-systems-e-values}
Suppose $\kappa$ is a field and $M$ is a finite dimensional $\kappa$-vector space which is also an $\Hcal$-module. Recall that a system of Hecke eigenvalues $(b_T)_{T\in \Hcal}$ appears in $M$ if there exists a finite extension $\kappa'/\kappa$ and $m \in M\otimes \kappa'$ such that $Tm=b_Tm$ in $M \otimes \kappa'$ for all $T \in \Hcal$.

\subsubsection{Coherent cohomology Hecke algebras}
\label{sec-hecke-min} Let $\gx$ be a Shimura datum of Hodge type. 
Recall the associated structure theory (\S\ref{sec-shimura-varieties-hodge-type}) and the  symplectic embedding of toroidal compactifications  $\varphi^{\Sigma/\tilde \Sigma}:\Sscr_{\Kcal}^{\Sigma} \to \Sscr_{g, \tilde \Kcal}^{\tilde \Sigma}$ \eqref{eq-varphi-tor}. We choose $\tilde \Sigma$ and $\Sigma$ smooth, so that $\Sscr_{\Kcal}^{\Sigma}$ is smooth. Let $\eta \in \chargpldom$.  
By \S\S\ref{sec-torsors}, \ref{sec-toroidal-bundles}, one has the coherent cohomology modules $H^i(\Shkn, \veta)$, $H^i(\Sscr_{\Kcal}^{\Sigma,n}, \vcaneta)$ and $H^i(\Sscr_{\Kcal}^{\Sigma,n}, \vsubeta)$ for every $n \in \zgeqz \cup \{+\}$ and every $i \geq 0$ (notation as in \S\ref{sec-reduct-mod-pn}). 
We explain how the Hecke algebra $\Hcal$ acts on each of these modules. 
This is standard for $H^i(\Shkn, \veta)$, but for $H^i(\Sscr_{\Kcal}^{\Sigma,n}, \vcaneta)$ and $H^i(\Sscr_{\Kcal}^{\Sigma,n}, \vsubeta)$ the definition of the trace maps is less obvious. 
For $\Sscr_{\Kcal}^n$, we also describe an action of $\Hcal$ on $H^i(\Sscr_{\Kcal}^n, \Fscr)$ where $\Fscr$ is an arbitrary (not necessarily locally free) $\gofafp$-equivariant coherent sheaf. 
This will be used in \S\ref{sec-serre-letter} with $\Fscr$ the ideal sheaf of $S_e$ in 
$\Sscr^1_{\Kcal}=S_{\Kcal}$. 

Let $g \in \gofqv$. For each of the spaces above, we define below an associated Hecke-operator $T_g$.
Since $\Hcal_v$ is generated by the characteristic functions of double cosets represented by $g \in \gofqv$, one deduces an action of all of $\Hcal$.

\subsubsection{Hecke action I: $\Sscr_{\Kcal}^n$}
\label{sec-hecke-open-sh}
Put $\Kcal_g=\Kcal \cap g\Kcal g^{-1}$. 
Then $g^{-1} \Kcal_g g=\Kcal \cap g^{-1} \Kcal g$. 
Let $\pi_1:=\pi_{\Kcal_g/\Kcal}$ and $\pi_2:=\pi_{ g^{-1} \Kcal_g g/\Kcal} \circ g$, where $g: \Sscr^n_{\Kcal_g} \stackrel{\sim}{\to} \Sscr^n_{g^{-1}\Kcal_g g}$ (\S\ref{sec-shimura-integral}).  
Then the  $\pi_j:\Sscr_{\Kcal_g}^{n} \to \Sscr_{\Kcal}^{n}$ are finite \'etale projections. Thus there is a trace map  $$\tr \pi_j : \pi_{j,*} \Ocal_{\Sscr_{\Kcal_g}^n} \to \Ocal_{\Sscr_{\Kcal}^n}. $$ 
Let $\Fscr$ be a $\gofafp$-equivariant coherent sheaf on the tower $(\Sscr_{\Kcal}^n)_{\Kcal^p}$ (\S\ref{sec-gofafp-objects-1}). Then $\pi_1^*\Fscr=\pi_2^*\Fscr$. For a finite morphism, the projection formula is valid for arbitrary coherent sheaves since then the direct image is exact.
\begin{comment}
(\cf \cite[Lemma~5.7]{Arapura-Frobenius-ampleness})
\end{comment}
Hence the Hecke operator $$T_g:H^i(\Shkn, \Fscr) \to H^i(\Shkn, \Fscr) $$ 
is defined as usual by push-pull: $T_g:=\tr \pi_2  \circ \pi^*_1$.

\subsubsection{Hecke action II: $\Sscr_{\Kcal}^{\Sigma,n}$}
\label{sec-hecke-tor-sh} For $j \in \{1,2\}$, set $\Sigma_g^j:= \pi_j^*\Sigma$. Then the $\Sigma_g^j$ are both admissible, finite rpcd's for $(\GG, \XX, \Kcal_g)$.  
 Let $\Sigma_g$ be a common, smooth refinement of $\Sigma^1_g$ and $ \Sigma^2_g$. Let $\pi'_j:\Sscr_{\Kcal_g}^{\Sigma_g,n} \to \Sscr_{\Kcal}^{\Sigma,n}$ be the corresponding projections. 
 Each is a composition of a refinement morphism  $r^j:\Sscr_{\Kcal_g}^{\Sigma_g,n} \to \Sscr_{\Kcal_g}^{\Sigma^j_g,n}$  corresponding to $\Sigma_g \subset \Sigma_g^j$ and a projection $\pi_j^0:\Sscr_{\Kcal_g}^{\Sigma_g^j,n} \to \Sscr_{\Kcal}^{\Sigma,n}$. 
By \cite[Proof of 5.1.3]{MadapusiHodgeTor}, the $r^j$ satisfy 
 $r^j_* \Ocal_{\Sscr_{\Kcal_g}^{\Sigma_g,n}}=\Ocal_{\Sscr_{\Kcal_g}^{\Sigma_g^j,n}}$ and $R^i r^j_* \Ocal_{\Sscr_{\Kcal_g}^{\Sigma_g,n}}=0$ for all $i>0$. 
 \begin{comment}
 Madapusi ref 2018 checked
 \end{comment}
 Since $\vcaneta$ and $\vsubeta$ are locally free and $\gofafp$-equivariant (\S\ref{sec-gofafp-toro}), combining the properties of $r^j$ with the projection formula gives 
 \begin{equation}
 \label{eq-cohom-refinement}
 H^i(\Sscr_{\Kcal_g}^{\Sigma_g,n}, \Vscr^?(\eta))
 =
 H^i(\Sscr_{\Kcal_g}^{\Sigma^j_g,n}, \Vscr^?(\eta))
 \end{equation}
 for $? \in \{\can, \sub\}$.

 The  $\pi_j^0$ are finite and we claim they are also flat (we learned this fact from J. Tilouine \cite[\S8.1.2, p. 1409]{tilouine-dual-bgg} and we thank B. Stroh and D. Rydh for discussions about why it is true). It suffices to check the flatness over $\Ocal_{E,\pfr}$; the corresponding statement over $\Ocal_{E,\pfr}/\pfr^n$ then follows by base change.
 
Observe that all of the toroidal compactifications considered here are Cohen-Macaulay over $\Ocal_{E,\pfr}$; specifically we need to justify why the $\Sscr_{\Kcal_g}^{\Sigma_g^j}$, which are associated to potentially non-smooth rpcd, are Cohen-Macaulay. Recall that, given any finite rpcd $\Sigma'$ and any base scheme $S$, one has an associated torus embedding $X_{\Sigma'}(S)$ over $S$, functorial in $\Sigma'$, as described in \cite[\Chap~IV, \Th~2.5]{Chai-Faltings-book}. By construction, the toroidal compactifications $\Sscr_{\Kcal_g}^{\Sigma_g^j}$ are \'etale locally torus embeddings of this type over $\Ocal_{E,\pfr}$. Thus we are reduced to checking that a torus embedding $X_{\Sigma'}(R)$ over a DVR $R$ is Cohen-Macaulay (the argument which follows works equally well over regular local $R$). By \cite[\Chap~1, \S3, \Th~14]{toroidal-embeddings} a torus embedding over a field is Cohen-Macaulay. Since a torus embedding over $R$ is flat over $R$, the result follows from \cite[\Th~23.9]{Matsumura-commutative-ring-theory}. Thus $\Sscr_{\Kcal_g}^{\Sigma_g^j}$ is Cohen-Macaulay. 

Since the target $\Sscr_{\Kcal}^{\Sigma}$ of $\pi_j^0$ is regular (recall that $\Sigma$ was assumed smooth), we conclude that $\pi^0_j$ is flat by \cite[\Th~23.1]{Matsumura-commutative-ring-theory}, which implies that a quasi-finite morphism between equi-dimensional, locally Noetherian schemes of the same dimension is flat if the domain is Cohen-Macaulay and the target is regular.

The finite flatness of $\pi_j^0$ gives a trace map $$\tr \pi_j^0 : \pi^0_{j,*} \Ocal_{\Sscr_{\Kcal_g}^{\Sigma^j_g,n}} \to \Ocal_{\Sscr_{\Kcal}^{\Sigma,n}}. $$ 
The trace gives a map $H^i(\Sscr_{\Kcal_g}^{\Sigma^j_g,n}, \Vscr^?(\eta)) \to H^i(\Sscr_{\Kcal}^{\Sigma,n}, \Vscr^?(\eta))$. Together with~\eqref{eq-cohom-refinement}, this yields $$\tr \pi'_j: H^i(\Sscr_{\Kcal_g}^{\Sigma_g,n}, \Vscr^?(\eta)) \to H^i(\Sscr_{\Kcal}^{\Sigma,n}, \Vscr^?(\eta)).$$ 
Having constructed $\tr \pi'_j$, our desired Hecke operators $$T_g^{\Sigma}:H^i(\Sscr_{\Kcal}^{\Sigma,n}, \Vscr^?(\eta)) \to H^i(\Sscr_{\Kcal}^{\Sigma,n}, \Vscr^?(\eta))$$ are given as before: $T_g^{\Sigma}:=\tr \pi'_2 \circ \pi'^*_1$. It follows again from the properties of the refinement morphisms $r^j$ that  $T_g^{\Sigma}$ is independent of the choice of $\Sigma_g$ (given another choice of smooth refinement $\Sigma'_g$ of both $\Sigma_g^1$ and $\Sigma_g^2$, choose a common, smooth refinement $\Sigma''_g$ of $\Sigma_g, \Sigma'_g$).

Finally, note that the definition of $T_g^{\Sigma}$ reduces to that of $T_g$ away from the boundary. The inclusion $\Sscr_{\Kcal}^n \to \Sscr_{\Kcal}^{\Sigma, n}$ is $\gofafp$-equivariant; it induces an $\Hcal$-equivariant restriction map $H^0(\Sscr_{\Kcal}^{\Sigma, n}, \vcaneta) \to H^0(\Sscr_{\Kcal}^{n}, \veta)$. The latter is an isomorphism when $\gx$ satisfies the Koecher principle. 

\subsubsection{Hecke action III: Hasse-regular subschemes}
\label{sec-hecke-hasse-regular}
For $j \in \{1,2\}$, let  $r^j$ and $\pi^0_j$ be as in \textnormal{\S\ref{sec-hecke-tor-sh}}. Let $(Z^0_{t}, a_t, f_{t,0})_{t=0}^r$ (resp. $(Z_{t}, a_t, f_{t})_{t=0}^r$) denote the component of a Hasse regular sequence for $(\Kcal, \Sigma)$ (resp. $(\Kcal^g, \Sigma_g)$). 
Even though the $\Sigma_g^j$ may not be smooth, we may still define a component $(Z^j_{t}, a_t, f_{t,j})_{t=0}^r$ of the Hasse regular sequence for $(\Kcal_g, \Sigma_g^j)$ by pullback: 
Put $Z^j_{t}:=(\pi_j^0)^{-1} (Z^0_{t})$ and $f_{t,j}:=(\pi^{0}_j)^* f_{t,0}$. Then $(r^j)^{-1}(Z^j_t)=Z_t$ and $(r^{j})^* f_{t,j}=f_t$ are the components for $\Sigma_g$.    
Let $y^{t}:Z_{t} \to \Sscr_{\Kcal_g}^{\Sigma_g}$, $y^{t,0}:Z_t^0 \to \Sscr_{\Kcal}^{\Sigma}$ and $y^{t,j}:Z_{t}^{j} \to \Sscr_{\Kcal_g}^{\Sigma^j_g}$ denote the inclusions. 
\begin{lemma} 
\label{lem-hecke-def-hasse-regular}
 For all $t$, one has:
\begin{enumerate}[label=(\alph*)]
\item 
\label{item-hasse-reg-refine}
For all $i>0$, $R^i r^j_* (y^t_* \Ocal_{Z_t})=0$ and $ r^j_* (y^t_* \Ocal_{Z_t})=y^{t,j}_*
\Ocal_{Z_t^j}$.
\item
\label{item-hasse-reg-finite-flat}
The pullback of $\pi_j^0$ along the inclusion  $y^{t,0}:Z_t^0 \to \Sscr_{\Kcal}^{\Sigma}$ is a finite flat map $Z_t^j \to Z_t^0$.
\end{enumerate}
\end{lemma}
\begin{proof} Part~\ref{item-hasse-reg-finite-flat} is a direct consequence of the finite flatness of $\pi_j^0$.
We prove~\ref{item-hasse-reg-refine} by induction on $t$: The case $t=0$ was done in \S\ref{sec-hecke-tor-sh}. Assume the two statements hold up to $t$ and consider them for $t+1$. By \Def~\ref{def-hasse-reg-seq} and the affineness of $y^{t}$, multiplication by $f_t$ gives a short exact sequence:
\begin{equation}
0 \to y^t_*(\omega^{-a_t} |_{Z_t}) 
\to 
y^t_* \Ocal_{Z_t} \to y^{t+1}_* \Ocal_{Z_{t+1}} \to 0.
\end{equation}
Apply $r^j_*$ and consider the long exact sequence of higher direct images. By the induction hypothesis,
$R^i r^j_* (y^t_* \Ocal_{Z_t})=0$ for all $i>0$. Since $(r^{j})^*(y^{t,j}_* \omega^{-a_t}|_{Z^j_t})=y^t_* \omega^{-a_t}|_{Z_t}$, the projection formula gives
$$R^i r^j_* (y^t_* \omega^{-a_t}|_{Z_t})=y^{t,j}_*\omega^{-a_t}|_{Z_t^j} \otimes R^i r^j_* (y^t_* \Ocal_{Z_t})=0$$ for  all $i>0$.
Thus $R^i r^j_* (y^{t+1}_* \Ocal_{Z_{t+1}})=0$ for all $i>0$, as it lies between two terms that are zero. Similarly, the induction hypothesis and projection formula also give \begin{equation}
0 \to y^{t,j}_*\omega^{-a_t}|_{Z_t^j} \to  y^{t,j}_* \Ocal_{Z_t^j} \to r^j_* y^{t+1}_* \Ocal_{Z_{t+1}} \to 0, 
\end{equation}
since $R^1 r^j_*( y^{t}_*\omega^{-a_t}|_{Z_t})=0$. Thus 
$r^j_* (y^{t+1}_* \Ocal_{Z_{t+1}})= y^{t+1,j}_* \Ocal_{Z_{t+1}^j}$. This proves~\ref{item-hasse-reg-refine}.
\end{proof}
 Using Lemma~\ref{lem-hecke-def-hasse-regular}, we may apply the method of \S\ref{sec-hecke-tor-sh} to $Z^0_t$ for every $t$. 
 This gives a Hecke operator 
 $T_g^{Z^0_t,\Sigma}:H^i(Z^0_t, \Vscr^?(\eta)) \to H^i(Z^0_t, \Vscr^?(\eta))$ for every $t$ and $? \in \{\can, \sub\}$. By construction, the Hecke actions on $Z^0_t$ and $Z^0_{t-1}$ are compatible: If $\alpha:Z^0_t \to Z^0_{t-1}$ denotes the inclusion, then the $\gofafp$-equivariant short exact sequence 
\begin{equation}
\label{eq-def-hecke-short-exact}
0 \to \omega^{-a_t} \to \Ocal_{Z^0_t} \to \alpha_* \Ocal_{Z^0_{t+1}} \to 0 \end{equation} induces an 
$\Hcal$-equivariant long exact sequence in cohomology. By the projection formula, the same is true if we tensor~\eqref{eq-def-hecke-short-exact} with either $\vcaneta$ or $\vsubeta$. 
\subsubsection{Notation for coherent cohomology Hecke algebras}
\label{sec-hecke-notation}
Let $\Hcal^{i,n}(\eta)$ denote the image of $\Hcal$ in $\End (H^i(\Shktorn, \vsubeta))$; we omit $\Kcal$ and $\Sigma$ from the notation. If $Z$ is a Hasse regular subscheme (\Def~\ref{def-Hasse-reg-subscheme}) of $\Shktorn$, then write $\Hcal^{i,n}_Z(\eta)$ for the image of $\Hcal$ in $\End (H^i(Z, \vsubeta))$. An arrow $\Hcal \to  \Hcal^{i,n}(\eta)$ or $\Hcal \to  \Hcal_Z^{i,n}(\eta)$ will always signify the defining projection.

\subsection{Statement of the factorization theorem} \label{sec-main-results-fact-hecke}
From now on, in addition to assuming $\Sigma$ to be smooth, we assume $\Sigma$ is a refinement of $\varphi^*\tilde \Sigma$ for some $\tilde \Sigma$ which admits $\beta$ log integral (\S\ref{sec-univ-semi-abel}). Recall that this can always be achieved by refining $\Sigma$. Then we may apply \Th~\ref{th-tor-ext-Gzip} and the results of \S\ref{sec-length-hodge-type},~\S\ref{sec-vanish-strata} which depend on it.

For every triple $(i, n, \eta)$ with $i \in \zgeqz$, $n \in \zgeqo$ and $\eta \in \chargpldom$, let 
\begin{equation}
\label{eq-reduction-to-h0}
F(i, n, \eta)=\{\eta' \in \chargpldom \ | \ \Hcal \longrightarrow \mathcal H^{i, n}(\eta) \mbox{ factors through } \Hcal \longrightarrow \Hcal^{0,n}(\eta') \}.
\end{equation}
It is desirable to know that $F(i,n,\eta)$ is large. This is accomplished by the following "factorization theorem":
\begin{theorem}[Reduction to $H^0$] \label{th-reduction-to-h0} Let $\eta \in \chargpldom$ Assume that $\gx$ is either of PEL type, or of compact type, or that $(S_{\Kcal}^{\Sigma}, \eta)$ satisfies Conditions ~\textnormal{\ref{cond-zero-dim}} and~\textnormal{\ref{cond-hdi}}. Then
\begin{enumerate}[label=(\alph*)]
\item 
\label{item-arith-progr} 
There exists an arithmetic progression $A$ such that $\eta+a\eta_{\omega} \in F(i, n, \eta)$ for all $a \in A \cap \zgeqo$. 
\item 
\label{item-cone-maximal}
Let $\Ccal \subset X^*(\TT)$ be the "global sections cone" defined in~\eqref{eq-flag-cone}. 
For all $\nu \in \Ccal$ and $\eta_1 \in F(i, n, \eta)$ there exists $m=m(\nu,n) \in \zgeqo$ such that $\eta_1+jm\nu \in F(i,n,\eta)$ for all $j \in \zgeqo$. 
\item 
\label{item-cone-phi-regular}
For all $\delta \in \rgeqz$, 
$F(i,n,\eta)$  contains a $\delta$-regular character  \textnormal{(\Def~\ref{def-delta-reg})}.
\end{enumerate} 
\end{theorem}

\begin{rmk}[Concerning our assumptions] We expect it will soon be shown Conditions~\ref{cond-zero-dim} and ~\ref{cond-hdi} hold for all Shimura varieties of Hodge type and all $\eta \in X^*_{+,L}(\TT)$. These conditions are vacuous in the compact case and are known to hold in the PEL case and in general when $\eta$ is a multiple of $\eta_{\omega}$; see Lemma~\ref{lem-zero-dim-pel} and \Rmk~\ref{rmk-hdi}.
 \label{rmk-assumptions-in-main-theorem} \end{rmk}

\begin{rmk} \Th~\ref{th-reduction-to-h0}\ref{item-cone-phi-regular} illustrates merely one of many regularity conditions that follow from \ref{item-cone-maximal}. Following the proof that \ref{item-cone-maximal} imples \ref{item-cone-phi-regular} in \S\ref{sec-increased-regularity}, it should be clear to the reader that "$\delta$-regular" may be replaced by any regularity condition whose singular locus is a finite union of hyperplanes. In particular, when $\GG=GSp(2g)$, this applies to the condition "spin-regular" studied in \cite{Kret-Shin-spin-valued-Galois-reps}.
\end{rmk}

\begin{rmk}
\label{rmk-boxer-pilloni-stroh}
For PEL Shimura varieties of type A and C (resp. Scholze's integral models of general Hodge-type Shimura varieties) Part \ref{item-arith-progr} of \Th~\ref{th-reduction-to-h0} was also obtained simultaneously and independently in \cite{Boxer-thesis} (resp. \cite{Pilloni-Stroh-CoherentCohomology}). 
The assumptions  
\ref{cond-zero-dim}, \ref{cond-hdi} are not needed in \cite{Pilloni-Stroh-CoherentCohomology}. 
By contrast, as far as we can tell, parts \ref{item-cone-maximal}-\ref{item-cone-phi-regular} do not follow by the  methods of \cite{Boxer-thesis} and \cite{Pilloni-Stroh-CoherentCohomology}.
\end{rmk}

\begin{rmk}[Reduction to $H^0$, modulo $p$] 
\label{rmk-reduction-to-h0-mod-p}
In the case $n=1$ (action of the Hecke algebra on the cohomology of the special fiber), a standard argument gives a more elementary statement in terms of systems of Hecke eigenvalues. The set $F(i,1,\eta)$ consists of all $\eta'$ such that every system of Hecke eigenvalues that appears in $H^i(\Shktoro, \vsubeta)$ also appears in $H^0(\Shktoro, \vsub(\eta'))$.

\end{rmk}

\subsubsection{Strategy of the proof} \label{sec-hecke-fact-strategy-proof}
The proof of \Th~\ref{th-reduction-to-h0}\ref{item-arith-progr} naturally breaks down into three steps:  
(i) "Descent" (Lemma \ref{lem-going-down}), 
(ii) "Weight Increase" (Lemma~\ref{lem-weight-increase}) and 
(iii) "Ascent" (Lemma~\ref{lem-going-up}).
"Descent" shows that $\Hcal \to \Hcal^{i,n}(\eta)$ factors through $\Hcal \to \Hcal_Z^{0,n}(\eta+b\eta_{\omega})$ for some $b \in \zgeqo$ and some Hecke-equivariant, nilpotent thickening $Z$ of a length stratum.
"Weight increase" then shows that the latter factors through $\Hcal \to \Hcal_Z^{0,n}(\eta+s\eta_{\omega})$ for all sufficiently large $s$ in an arithmetic progression containing $b$. By choosing $s$ large enough, the  vanishing lemma~\ref{lem vanishing for strata}  applies. 
Finally, "Ascent" yields the factorization claimed in \ref{item-arith-progr}. 

The flag space is introduced in \S\ref{sec-flag-space} to prove \ref{item-cone-maximal}-\ref{item-cone-phi-regular}. 
The latter are proved in \S\ref{sec-increased-regularity} by applying results of \S\ref{subsection global sec cone} about $\GF^{\mu}$ and $\Ccal$.

\subsection{Descent, Weight increase and going up} 
\label{sec-going-down-Weight-increase-going-up}
Throughout 
\S\ref{sec-going-down-Weight-increase-going-up}, adopt the assumptions of
\Th~\ref{th-reduction-to-h0}. If $H^i(\Sscr^{\Sigma,n}_{\Kcal}, \vsubeta)=0$, then \Th~\ref{th-reduction-to-h0} is vacuous for $(i,n,\eta)$. So assume for the rest of \S\ref{sec-fact-hecke} that $H^i(\Sscr^{\Sigma,n}_{\Kcal}, \vsubeta)\neq 0$.
The following linear algebra lemma will be applied repeatedly:
\begin{lemma} 
\label{lem-fact-hecke} Suppose  $R$ is a $\zp$-algebra and $\epsilon:M'\to M$ is a morphism of $R\mathcal H$-modules. 
Write $\mathcal H_M$ (resp. $\mathcal H_{M'}$) for the image of $\mathcal H$ in $\End_R(M)$ (resp. $\End_R(M')$). 
If $\epsilon$ is injective (resp. surjective), then the map $\Hcal \twoheadrightarrow \Hcal_{M'}$ (resp. $\Hcal \twoheadrightarrow \Hcal_{M}$) factors through $\Hcal_{M}$ (resp. $\Hcal_{M'}$).
\end{lemma}

\begin{proof}
Both factorizations are equivalent to the corresponding inclusions of kernels, which are trivial.
\end{proof}

\begin{lemma}[Descent] 
\label{lem-going-down}
Suppose $(i, n, \eta)$ is a triple as in \textnormal{\S\ref{sec-main-results-fact-hecke}}.
Then there exists $b \in \zgeqo$ and a Hasse-regular sequence $\Zcal=(Z_j, a_j, f_j)_{j=0}^{i}$  such that \begin{equation} \label{eq-going-down} \Hcal \twoheadrightarrow \Hcal^{i,n}(\eta) \mbox{ factors through } \Hcal \twoheadrightarrow \Hcal_{Z_i}^{0,n}(\eta+b\eta_{\omega}). \end{equation} \end{lemma}
\begin{proof}  We prove by induction on $J$ that for every $0 \leq J \leq i$ there exists a Hasse-regular sequence $\Zcal_J=(Z_j, a_j, f_j)_{j=0}^{J}$, an integer $b_{J}$ and a factorization \begin{equation} \label{eq-going-down-2}
\Hcal \twoheadrightarrow \Hcal^{i,n}(\eta) \mbox { factors through } \Hcal \twoheadrightarrow \Hcal_{Z_J}^{i-J,n}(\eta+b_J\eta_{\omega}). \end{equation} 
It suffices to show that a Hasse-regular sequence $\Zcal_J$ satisfying~\eqref{eq-going-down-2} can be extended by some $(Z_{j+1}, a_{j+1}, f_{j+1})$ and $b_{J+1} \geq 1$ to a Hasse regular sequence $\Zcal_{J+1}$ satisfying 
\begin{equation} 
\label{eq-going-down-3}
\Hcal \twoheadrightarrow \Hcal^{i,n}(\eta) \mbox { factors through } \Hcal \twoheadrightarrow \Hcal_{Z_{J+1}}^{i-(J+1),n}(\eta+b_{J+1}\eta_{\omega}). \end{equation} 

 The factorization~\eqref{eq-going-down-2} implies that $H^{i-J}(Z_J, \vsubeta \otimes \omega^{b_J}) \neq 0$. Recall that $\dim Z_J=d-J$ (\Rmk~\ref{rmk-hasse-reg}\ref{item-hasse-reg-dim}). By Lemma~\ref{lem vanishing for strata}, there exists $r \geq 1$ such that \begin{equation} H^{i-J}(Z_J, \vsubeta \otimes \omega^{b_J+ra_J}) = 0 .\label{eq going down vanishing} \end{equation}

 By \Cor~\ref{cor-hasse-reg-injective}, $f_J \in H^0(Z_J, \omega^{a_J})$ is injective and by \Rmk~\ref{rmk-hasse-reg}\ref{item-hasse-reg-gofafp-equiv} it is also $\gofafp$-equivariant. Thus multiplication by $f_J^r$ yields a $\gofafp$-short exact sequence of sheaves on $Z_J$:

\begin{equation}
0\to \vsubeta \otimes \omega^{b_J} \to \vsubeta \otimes \omega ^{b_J+ra_J} \to (\vsubeta \otimes \omega ^{b_J+ra_J})|_{Z(f_J^r)}\to 0
\label{eq going down short exact seq}.\end{equation}

Set $Z_{J+1}=Z(f_J^r)$ and $b_{J+1}=b_J+ra_J$. 
Applying~\eqref{eq going down vanishing}, and Lemma~\ref{lem-fact-hecke} to the associated long exact sequence in cohomology yields~\eqref{eq-going-down-3}. 
By \Cor~\ref{cor-length-shimura}, $(Z_{J+1})_{\red}=S_{d-(j+1)}^*$.
Applying \Th~\ref{th-glue} to the length Hasse invariant $h_{d-(J+1)}$ gives $a_{J+1} \geq 1$ and $f_{J+1} \in H^0(Z_{J+1}, \omega^{a_{J+1}})$ satisfying the conditions of that theorem. By construction, $\Zcal_{J+1}=(Z_j,a_j,f_j)_{j=0}^{J+1}$ is a Hasse regular sequence extending $\Zcal_J$. This completes the induction. 
\end{proof}

For the rest of 
\S\ref{sec-going-down-Weight-increase-going-up}, fix the notation and the Hasse-regular sequence of Lemma~\ref{lem-going-down}.

\begin{lemma}[Weight increase]
\label{lem-weight-increase}
 For all sufficiently large $s \in b+a_i\ZZ$, one has: 
\begin{equation} 
\label{eq Weight increase fact}
\Hcal \to \mathcal H_{Z_i}^{0,n}(\eta+b\eta_{\omega}) \mbox{ factors through } 
\Hcal \to \Hcal_{Z_i}^{0,n}(\eta+s\eta_{\omega})   \end{equation} and for all $0\leq j \leq i-1$,  
\begin{equation} H^1(Z_j, \vsubeta \otimes \omega^{s-a_j})=0. \label{eq Weight increase vanishing}\end{equation}\label{lem Weight increase} \end{lemma}
\begin{proof} Consider the injective, $\gofafp$-equivariant section $f_i \in H^0(Z_i, \omega^{a_i})$ pertaining to $(Z_j, a_j, f_j)_{j=0}^i$. 
For all $r \geq 1$, multiplication by $f_i^r$ induces a Hecke-equivariant injection \begin{equation} H^0(Z_i, \vsub(\eta+b\eta_{\omega})) \hookrightarrow H^0(Z_i, \vsub(\eta+(b+ra_i)\eta_{\omega})). \label{eq Weight increase injection} \end{equation} 
Lemma~\ref{lem-fact-hecke} gives the factorization~\eqref{eq Weight increase fact}. It satisfies the vanishing~\eqref{eq Weight increase vanishing} for  $r \gg 0$  by Lemma~\ref{lem vanishing for strata}.
\end{proof}

\begin{lemma}[Ascent]
\label{lem-going-up}
Suppose $s$ satisfies~\eqref{eq Weight increase fact} and~\eqref{eq Weight increase vanishing}. Then \begin{equation}
\label{eq going up}
\Hcal \to \Hcal^{0,n}_{Z_i}(\eta+s\eta_{\omega}) \mbox{  factors through } \Hcal \to \Hcal^{0,n}(\eta+s\eta_{\omega}).
\end{equation}  
 \end{lemma}
\begin{proof} We prove by downward induction on $J$ that  
\begin{equation} 
\label{eq-going-up-2} 
\Hcal \to  \Hcal_{Z_i}^{0,n}(\eta+s\eta_{\omega}) \mbox{ factors through } \Hcal \to \Hcal_{Z_J}^{0,n}(\eta+s\eta_{\omega})    
\end{equation} for all $0 \leq J \leq i$. For $J=i$ there is nothing to prove. So we assume~\eqref{eq-going-up-2} and prove that also \begin{equation} 
\label{eq-going-up-3} 
\Hcal \to  \Hcal_{Z_J}^{0,n}(\eta+s\eta_{\omega}) \mbox{ factors through } \Hcal \to \Hcal_{Z_{J-1}}^{0,n}(\eta+s\eta_{\omega}).    
\end{equation}
Combining the factorizations~\eqref{eq-going-up-2} and~\eqref{eq-going-up-3} will then complete the induction.

Consider the short exact sequence of sheaves on $Z_{J-1}$ given by multiplication by $f_{J-1}$:

\begin{equation}
0\to \vsub(\eta+(s-a_{J-1})\eta_{\omega}) \to \vsub(\eta+ s\eta_{\omega}) \to \vsub(\eta+s\eta_{\omega})|_{Z_J}\to 0
\label{eq going up short exact seq}.\end{equation}
The associated long exact sequence gives

\begin{equation}
 H^0(Z_{J-1},\vsub(\eta+s\eta_{\omega})) \to H^0(Z_J, \vsub(\eta+s\eta_{\omega}))\to H^1(Z_{J-1}, \vsub(\eta+(s-a_{J-1})\eta_{\omega}))
\label{eq going up long exact seq}.\end{equation}

\noindent By~\eqref{eq Weight increase vanishing}, the right-most term in~\eqref{eq going up long exact seq} is zero (given our choice of $s$). 
Hence Lemma~\ref{lem-fact-hecke} gives~\eqref{eq-going-up-3}.
\end{proof}

\begin{proof}[Proof of \textnormal{\Th~\ref{th-reduction-to-h0}\ref{item-arith-progr}:}] Apply Lemmas~\ref{lem-going-down}, ~\ref{lem Weight increase} and ~\ref{lem-going-up} successively. \end{proof}
\section{Increased regularity via the flag space} 
\label{sec-regularity-flag-space}

\subsection{The flag space} \label{sec-flag-space}
The quotient of the $\Pcal$-torsor $I_{\Pcal}$ on $\Sscr_{\Kcal}$ by the Borel subgroup $\Bcal$ is represented by a smooth, projective $\Sscr_{\Kcal}$-scheme denoted $\Flk$, which we call the flag space of $\Shk$. Similarly, the flag space of $\Shktor$ is $\Flktor:=I_{\Pcal}^{\Sigma}/\Bcal$. Let $Fl_{\Kcal}$ (resp. $\Flktoro$) denote the special fiber of $\Flk$ (resp. $\Flktor$). 
There are natural isomorphisms
\begin{equation}
\label{eq-flag-space-fiber-product}
Fl_{\Kcal} \simeq \Shko \times_{\GZip^\mu}\GF^\mu
\mbox{ and }
\Flktoro \simeq \Shktoro \times_{\GZip^\mu}\GF^\mu. \end{equation}

By construction, the tower $(\Flktor)_{\Kcal^p, \Sigma}$ admits an action of $\gofafp$. The system of maps $(\Flktor \to \Shktor)$  is $\gofafp$-equivariant. 
The construction of \S\ref{Gvarstacks} assigns a $\gofafp$-equivariant line bundle  $\leta$ on $\Flk^{\Sigma}$ (resp. $\lcaneta$ on $\Flktor$)  to every $\eta \in X^*(\TT)$. The restriction of $\leta$ (resp. $\lcaneta$) to a fiber is the line bundle previously called $\leta$ in 
\S\ref{sec-torsors}. Write $\pi: \Flktor \to \Shktor$ for the natural projection.
For all $\eta \in \chargpldom$ one has canonical, $\gofafp$-equivariant identifications \begin{equation}
\label{eq-leta-veta}
\pi_*\leta=\veta
\mbox{ and } 
\pi_*\lcaneta=\vcaneta.
\end{equation}

\begin{rmk}\label{Ldom}
Note that the $k$-fibers of $\pi$ are flag varieties isomorphic to $L/B_L$. If the character $\eta$ is not in the cone $\chargpldom$, then $H^0(L/B_L,\leta)=0$, so we deduce $\pi_*\leta=\veta=0$. This implies that if $\leta$ admits a nonzero global section on $\Flktoro$, then $\eta\in \chargpldom$. In particular, one has $\Ccal \subset \chargpldom$ (\S\ref{subsection global sec cone}).
\end{rmk}

Recall (\S\ref{sec-toroidal-bundles}) that $D$ denotes the boundary divisor of the toroidal compactification $\Sscr_{\Kcal}^{\Sigma}$.  Put $\lsubeta:=\leta \otimes \pi^*\mathcal O(-D)$. Since $\mathcal O(-D)$ is a line bundle, the projection formula and~\eqref{eq-leta-veta} gives $\pi_*\lsubeta=\vsubeta$.

\begin{rmk}[Motivation for the flag space]
\label{rmk-GS}
The $\Ocal_{\pfr}$-scheme $\Flktor$ is a simultaneous generalization of aspects of work of Griffiths-Schmid over $\CC$ \cite{Griffiths-Schmid-homogeneous-complex-manifolds} (see also the works Carayol cited in the introduction) and work of Ekedahl-van der Geer \cite{Ekedahl-Geer-EO} in characteristic $p>0$. 
We were inspired by both of these works. 

Griffiths-Schmid studied homogenous complex manifolds of the form $\Gamma\backslash G/T$, where $G$ is a connected, semisimple, real Lie group, $T$ is a compact Cartan subgroup, $G/T$ is endowed with a complex structure and $\Gamma$ is an arithmetic subgroup. 
Carayol termed these Griffiths-Schmid manifolds \cite{Carayol-LDS-1}. 

Griffiths and his school refer to those complex structures on $G/T$ which fiber holomorphically over a Hermitian symmetric domain as the {\em classical case}. In the classical case, the Griffiths-Schmid manifolds $\Gamma \backslash G /T$ are algebraic. By contrast, it has recently been shown that all Griffiths-Schmid manifolds in the non-classical case are not algebraic \cite{Griffiths-Robles-Toledo-not-algebraic}.

The complex manifolds $\Flktor(\CC)$ are adelic versions of algebraic Griffiths-Schmid manifolds. They provide a moduli interpretation of every algebraic Griffiths-Schmid manifold which lies over a Shimura variety of Hodge-type. 

For Siegel modular varieties, Ekedahl-van der Geer defined a flag space $Fl_g$ over $\fp$ by more elementary means than ours, namely in terms of full symplectic flags refining the Hodge filtration in $H^1_{\dR}$ of an abelian scheme. 
They went on to define a stratification of $Fl_g$ and study its relation to the EO stratification of $\Shgk \otimes \fp$. 
The stratification obtained above via~\eqref{eq-flag-space-fiber-product} agrees with that of \cite{Ekedahl-Geer-EO} for Siegel varieties. 
\label{rmk griffiths schmid ekedahl geer}\end{rmk}

\begin{rmk}[Comparison of cohomology] Suppose $\eta \in \chargpldom$, $n \in \zgeqz \cup \{+\}$ and $\pi: \Flktorn \to \Shktorn$ is the natural projection. 
An application of Kempf's vanishing theorem \cite[II, \Chap 4]{jantzen-representations} and Lemma~\ref{vanishn} yields $R^i \pi_* \mathcal \lsubeta=0$ for all $i>0$. 
Consequently, for all $i \in \zgeqz \cup\{+\}$ there is a Hecke-equivariant isomorphism \begin{equation} H^i(\Flktorn, \lsubeta)\cong H^i(\Shktorn, \vsubeta). \label{eq-kempf} \end{equation} \label{rmk-kempf}
In this paper~\eqref{eq-kempf} is only used with $i=0$, when it follows directly from~\eqref{eq-leta-veta}. 
However,~\eqref{eq-kempf} illustrates another advantageous property of the flag space, which is useful in other situations \cf the forthcoming work \cite{Brunebarbe-Goldring-Koskivirta-Stroh-ampleness}.\end{rmk}

\subsection{Increased Regularity}
\label{sec-increased-regularity}
The flag space affords additional Hecke factorizations as follows:

\begin{proof}[Proof of \textnormal{\Th~\ref{th-reduction-to-h0}\ref{item-cone-maximal}:}]   By \S\ref{subsection global sec cone}, for all $\nu \in \Ccal$ there exists a $\gofafp$-equivariant, injective section $h_{\nu} \in H^0(\Flktoro, \lcan(\nu))$. 
By \Th~\ref{th-glue}, there exists $m=m(\nu,n) \geq 1$ such that $h_{\nu}^m$ lifts to an injective, $\gofafp$-equivariant section $\tilde h_{\nu}^m \in H^0(\Flktorn, \lcan(m\nu))$. 
For all $j \geq 1$, multiplication by $\tilde h_{\nu}^{jm}$ induces a Hecke-equivariant injection 
$$H^0(\Flktorn, \lsub(\eta_1)) \hookrightarrow H^0(\Flktorn, \lsub(\eta_1+jm\nu)).$$ 
If $\eta_1 \in F(i,n,\eta)$, then it follows from Lemma~\ref{lem-fact-hecke} and ~\eqref{eq-leta-veta} that $\eta_1+jm\nu \in F(i,n,\eta)$.
\end{proof}

\begin{proof}[Proof of \textnormal{\Th~\ref{th-reduction-to-h0}\ref{item-cone-phi-regular}:}]
By \S\ref{subsection global sec cone}, the cone $\Ccal$ spans $X^*(\TT)_{\QQ}$. On the other hand, the singular locus in $X^*(\TT)_{\QQ}$ is a finite union of hyperplanes \viz the root hyperplanes $\alpha^{\perp}$ for $\alpha \in \Phi(\GG, \TT)$ (\S\ref{sec-weyl-chamber}). Therefore $\Ccal$ contains a regular element $\nu \in \chargpldom$. 

By \Th~\ref{th-reduction-to-h0}\ref{item-arith-progr}, $F(i,n,\eta)$ is nonempty. So let $\eta_1 \in F(i,n,\eta)$. By \Th~\ref{th-reduction-to-h0}\ref{item-cone-maximal}, there exists $m=m(\nu, n)$ such that $\eta_1+jm\nu \in F(i,n,\eta)$ for all $j \in \zgeqo$; since $\nu$ is regular, $\eta_1+jm\nu$ is  $\delta$-regular for all sufficiently large $j$. 
\end{proof}

\section{Galois representations}

\label{sec-galois-reps}
This section is devoted to \Th~\ref{th-intro-galois} on the association of Galois representations (resp. pseudo-representations) to automorphic representations whose archimedean component is a non-degenerate limit of discrete series (LDS) (resp. coherent cohomology modulo $p^n$). \S\S\ref{sec-arch-reps}-\ref{sec-satake} recall facts and introduce notation regarding archimedean and non-archimedean representations respectively. To simplify the ensuing statements, we formalize the existence of a Galois representation associated to an automorphic representation as a condition in \S\ref{sec-regular-galois-reps}. Our results are then stated as four theorems in \S\S\ref{sec-torsion-th}-\ref{sec-nondeg-lds}. These theorems are proved in \S\ref{sec-pf-torsion-lds}.
We hope that by carefully stating our hypotheses in a general context, it will be easy to apply them as soon as new results are established for cohomological representations (\eg see \Rmk~\ref{rmk-kret-shin}). 

Recall our notation and conventions regarding structure theory of reductive groups, (based) root data and related objects (\S\ref{sec-structure-theory-root-data}) and Shimura varieties of Hodge type \S\ref{sec-shimura-varieties-hodge-type}. Fix a Shimura datum of Hodge type $\gx$ together with an integral symplectic embedding $\varphi$~\eqref{eq-integral-symplectic-embedding}. 
Recall from \S\ref{sec-shimura-root-data-mu} that associated to $(\GG, \XX, \varphi)$ we have: The conjugacy class of cocharacters $[\mu]_{\overline{E}_\pfr}$ over ${\overline{E}_\pfr}$, a representative $\mu$ defined over $E_{\pfr}$, a Borel pair $(\BB, \TT)$ of $\GG_{\QQ_p}$, opposite parabolic subgroups $\PP, \PP^+$ such that $\BB_{E_p}\subset \PP$ and the Levi $\LL=\cent(\mu)=\PP \cap \PP^+$ over $E_{\pfr}$ containing $\TT_{E_{\pfr}}$.

Throughout \S\ref{sec-galois-reps}, fix an isomorphism (of abstract fields) $\iota:\qpbar \stackrel{\sim}{\longrightarrow} \mathbf C$. We choose $\iota$ compatibly with $\PP, \PP^-$ and $\Delta$ as explained in \S\ref{sec-shimura-compatible-complex}, so that $\iota \PP_{\qpbar}$ is the stabilizer of the Hodge filtration for the $\RR$-Hodge structure $\Ad \circ h$, where $h \in \XX$ gives rise to $\iota \mu_{\qpbar}$ via \S\ref{sec-notn-cocharacter}.
\subsection{Archimedean representation theory} \label{sec-arch-reps}
We briefly review notation and results about real groups and Lie algebra cohomology needed later in \S\ref{sec-galois-reps}.  
\subsubsection{Infinitesimal Character} \label{harish-chandra iso}
 Following Buzzard-Gee \cite{Buzzard-Gee-conjectures}, we say that $\chi \in X^*(\TT)_{\CC}$ is $L$-algebraic (resp. $C$-algebraic) if $\chi \in \chargpbft$ (resp. $\chi \in \chargpbft+\rho$).

Let $\gfr$ (resp. $\tfr$) be the complexified Lie algebra of $\GG$ (resp. $\TT$) and let $\zfr$ be the center of the universal enveloping algebra of $\gfr$. Let $\Sym(\tfr)$ denote the symmetric algebra of $\tfr$.
We normalize the Harish-Chandra isomorphism
\begin{equation}\label{eq harish-chandra iso} 
\zfr \xrightarrow{\sim} (\Sym(\tfr))^{W} 
\end{equation}
in the usual way, meaning that the infinitesimal character of the trivial representation is identified with $\rho$.

Let $K$ be a maximal compact subgroup of $\gofr$. Let $\pi_{\infty}$ be an irreducible Harish-Chandra module \ie an irreducible $(\gfr, K_{\CC})$-module. Write $\chi_{\infty}$ for the infinitesimal character of $\pi_{\infty}$, identified with an element of $X^*(\TT)_{\CC}/W$ via~\eqref{eq harish-chandra iso}.

Given a property $\Pcal$ of elements of $X^*(\TT)_{\CC}$ which is stable under the action of $W$, we say that $\pi_{\infty}$ has property $\mathcal P$ if $\chi_{\infty}$ does. We say that an automorphic representation has property $\Pcal$ if its archimedean component does.

\subsubsection{Limits of discrete series} \label{sec-lds} The parametrization of limits of discrete series (LDS) for $\gofr$ is complicated by the fact that $\gofr$ is often neither semisimple nor connected in the classical topology. For a semisimple real Lie group which is connected in the classical topology, LDS were parametrized in \cite[\S1]{Knapp-Zuckerman-Annals}. In the generality considered here, a full parametrization of LDS was given by the first author in both \cite{Goldring-Thesis} and \cite{Goldring-LDS-nondeg-deg-functoriality}, based on explanations by Schmid and Vogan.
Here we restrict attention to $C$-algebraic LDS, since they are the only LDS considered in this paper.

A $C$-algebraic LDS Harish-Chandra parameter is a pair $(\lambda, \Ccal)$, where $\Ccal$ is a Weyl chamber for the root datum $\Rcal\Dcal(\GG,\TT)$ (see \S\S\ref{sec-root-data},\ref{sec-weyl-chamber}),  $\lambda \in \partial \Ccal \cap (X^*(\TT) +\rho)$ and $\langle \lambda , \alpha^{\vee} \rangle \neq 0$ for all $\Ccal$-simple $\alpha \in \Phi_{\LL}$.\footnote{Our definition implicitly exploits the relationship between $K_{\CC}$ and $\LL_{\CC}$ which exists due to $\gx$ being a Shimura datum. If $G$ is a real group which does not arise as $\GG_{\RR}$ for some Shimura datum $\gx$, in particular if $G$ admits discrete series but does not admit holomorphic discrete series, then one should define Harish-Chandra parameters differently in terms of roots of $K_{\CC}$.} For every $C$-algebraic, LDS Harish-Chandra parameter $(\lambda,\Ccal)$, there exists a $C$-algebraic LDS Harish-Chandra module $\pi(\lambda, \Ccal)$. The LDS $\pi(\lambda, \Ccal)$ may be constructed as the image of a discrete series $\pi_{\rho}$ by the Zuckerman translation functor $\transl_{W\lambda}$ which goes from Harish-Chandra modules of infinitesimal character $W\rho$ to ones of infinitesimal character $W\lambda$ (\cf the discussion preceding \Th~2.1 in \cite{Soergel-LDS}).

The fundamental dichotomy of non-degenerate versus degenerate was introduced for LDS by Knapp-Zuckerman in their classification of tempered representations \cite{Knapp-Zuckerman-Annals}.
An LDS $\pi(\lambda, \Ccal)$ is degenerate if $\lambda$ is orthogonal to $\alpha^{\vee}$ for some $\alpha \in \Phi_L$ (which is then necessarily not $\Ccal$-simple); otherwise, it is called non-degenerate.

Following \cite[\S2.3]{Goldring-Galois-reps-HLDS}, 
we say that an LDS is $\XX$-holomorphic if it is isomorphic to some $\pi(\lambda, \Ccal)$ with $\Ccal$ the $\Delta$-dominant chamber (here $\XX$ pertains to the Shimura datum $\gx$). As noted in \loccitn, every $\XX$-holomorphic LDS is non-degenerate.

\begin{rmk}[Degeneracy via Langlands Parameters]

The first author has shown  that an equivalent formulation of degeneracy is that the image of the restriction to $\ccross$ of the Langlands parameter of $\pi(\lambda, \Ccal)$ contains a simple group of rank at least two \cite{Goldring-LDS-nondeg-deg-functoriality}.
In turn, this characterization allows to generalize the non-degenerate/degenerate dichotomy to arbitrary Harish-Chandra modules, (\cite{Goldring-LDS-nondeg-deg-functoriality,Goldring-Paris-Talk-Functoriality-2015}).
\end{rmk}
\subsubsection{Lie algebra cohomology} \label{sec lie algebra cohomology}
Let $\pfr$ (resp. $\lfr$) be the complexified Lie algebra of $\PP$ (resp. $\LL$).  
For $\eta \in \chargpldom$, let $V_{\eta}$ be the irreducible finite-dimensional representation of $\LL_{\CC}$ of highest weight $\eta$.
Given a Harish-Chandra module $\pi_{\infty}$ for $\gofr$, write $H^i(\pfr, \lfr, \pi_{\infty} \otimes V_{\eta})$ for the relative Lie algebra cohomology of the pair ($\pfr$, $\lfr$) with coefficients in $\pi_{\infty} \otimes V_{\eta}$ (\cf \cite{Vogan-Zuckerman,Guichardet-book-Lie-algebras,Borel-Wallach-book}, though note that the latter reference's notation is somewhat different).

Define the \emph{cohomology degree} $\cd(\Ccal)$ of a Weyl chamber $\Ccal$ by \begin{equation} \cd(\Ccal)=\card (\{\alpha \in \Phi^{+}|\langle  \Ccal , \alpha^{\vee} \rangle<0 \})
\label{eq cohomology degree}. \end{equation}   

We shall need the following result about the $(\pfr,\lfr)$-cohomology of non-degenerate LDS.

\begin{theorem}[Schmid-Williams-Harris] 
\label{th-lie-algebra-cohomology}
Let $\pi(\lambda, \Ccal)$ be a non-degenerate LDS, normalized by requiring that $\Ccal$ is $\Delta_{\LL}$-dominant.
Then 
\begin{equation}
\label{eq lie algebra cohomology}
\dim H^{\cd(\Ccal)}(\pfr, \lfr, \pi(\lambda, \Ccal) \otimes V_{-w_{0,\LL}\lambda-\rho})=1.
\end{equation}
\end{theorem}
\begin{rmk} Schmid computed the $\nfr$-cohomology of discrete series \cite{Schmid-L2-cohomology-discrete-series}. 
The computation was generalized to non-degenerate LDS by Williams \cite{Williams-nondeg-LDS}. Harris translated their results to the setting of $(\pfr, \lfr)$-cohomology \cite[\Th~3.4]{Harris-Ann-Arbor}.   
In ~\Th~3.4 of \loccitn, Harris makes the additional claim that the $(\pfr, \lfr)$-cohomology of $\pi(\lambda, \Ccal)$ is zero in all degrees other than $\cd(\Ccal)$. This is false already for $\GG=GL(2)$. It is true when $\gofr$ is semisimple and connected in the classical topology. We shall only use the part of \loccit stated in \Th~\ref{th-lie-algebra-cohomology}, which is correct.  
\end{rmk}

Following \cite[\S\S2-3]{Harris-Ann-Arbor}, we recall in the next corollary how \Th~\ref{th-lie-algebra-cohomology} leads to embedding the finite part $\pi_f$ of a cuspidal automorphic representation $\pi$ of $\GG$ with non-degenerate LDS archimedean component $\pi_{\infty}$ in the coherent cohomology of the Shimura variety $\shgx$. To this end, it seems unfortunately necessary to use the following analogue for coherent cohomology  of "interior cohomology" for local systems: Given $\lambda \in X^*_{+,\LL}(\TT)$, let 
\begin{equation}
\label{eq-interior-cohom}
\bar{H}^{i}(\Shktor, \Vscr(\lambda)):=\imag [H^{i}(\Shktor, \vsub(\lambda))
 \to
 H^{i}(\Shktor, \Vscr^{\can}(\lambda))],      
    \end{equation} 
the image of the map induced by the short exact sequence $$0 \to \Vscr^{\sub}(\lambda)=\Vscr^{\can}(\lambda)(-D) \to \Vscr^{\can}(\lambda) \to \Vscr^{\can}(\lambda)|_D \to 0 $$
associated to the boundary divisor $D$. Write $\Sscr^{\Sigma}_{\Kcal, \CC}:=\Sscr^{\Sigma}_{\Kcal} \otimes_{\Ocal_E,\pfr}\CC$. By \cite[\Prop~2.2]{Harris-Ann-Arbor}, the (double) direct limits $$\varinjlim_{\Kcal, \Sigma} H^{i}(\Shktorc, \vsub(\lambda)) \hspace{.1pc}
\mbox{ and }
\hspace{.1pc}
\varinjlim_{\Kcal, \Sigma} H^{i}(\Shktorc, \vcan(\lambda))$$ 
are both admissible $\gofaf$-modules and the natural map from the first to the second is $\gofaf$-equivariant. Thus  $\varinjlim_{\Kcal, \Sigma} \bar{H}^{i}(\Shktorc, \vlambda)$ is also an admissible $\gofaf$-module.

\begin{corollary}
\label{cor-gofaf-emebedding}
Suppose $\pi=\pi_f \otimes \pi(\lambda, \Ccal)$ is a cuspidal automorphic representation of $\mathbf G$, with $\Ccal$ normalized to be $\Delta_{\LL}$-dominant. Then there is a $\gofaf$-equivariant embedding
\begin{equation} 
\label{eq-gofaf-embedding}
\pi_f 
\hookrightarrow 
\varinjlim_{\Kcal, \Sigma} 
\bar H^{\cd(\Ccal)}(\Shktorc, \vsub(-w_{0,\LL}\lambda-\rho)). 
\end{equation}   
\end{corollary}
\begin{rmk}
\label{rmk-lds-cohom-shimura}
Though it will not be used in this paper, note that \Cor~\ref{cor-gofaf-emebedding} is purely a result over $\CC$ and is valid for any Shimura datum, even if not of Hodge type and not known to admit integral models.
\end{rmk}

\begin{proof}[Proof of \textnormal{\Cor~\ref{cor-gofaf-emebedding}:}] This is one of the key upshots of  \cite[\S\S2-3]{Harris-Ann-Arbor} but not explicitly stated there in the above form, so we review the key steps: For every dominant $\lambda$, \S2.6 of \loccit defines a space of harmonic cusp forms $\Hcal^*_{\cusp, \lambda}$, which is a $\gofaf$-module (not to be confused with our notation for Hecke algebras). By \Th~2.7 of \loccitn, there is a $\gofaf$-equivariant injection  
\begin{equation}
\label{eq-harmonic-cusp-into-shimura}
\Hcal^*_{\cusp, \lambda} 
\hookrightarrow 
\varinjlim_{\Kcal, \Sigma} 
\bar{H}^{*}(\Shktorc, \vlambda).
\end{equation}
 On the other hand, let $\Acal_{\cusp}(\GG)$ be the space of cuspidal automorphic forms of $\GG$. Then  (3.0.1) of \loccit  states that \begin{equation}
\label{eq-harmonic-cusp-lie-alg-cohom}
 \Hcal^*_{\cusp, \lambda} 
 \cong 
H^*(\pfr, \lfr, \Acal_{\cusp}(\GG) \otimes V_{\lambda})
\end{equation} By a theorem of Gelfand and Piatetski-Shapiro, the space $\Acal_{\cusp}(\GG)$ is a semisimple, admissible $\gofaf \times (\gfr, \LL_{\CC})$-module. Since $(\pfr, \lfr)$ acts trivially on the finite parts of automorphic representations, decomposing inside the right-hand side of ~\eqref{eq-harmonic-cusp-lie-alg-cohom} yields a sum over all cuspidal automorphic representations $\pi:=\pi_f \otimes \pi_{\infty}$: 
\begin{equation}
\label{eq-space-auto-forms}
    H^*(\pfr, \lfr, \Acal_{\cusp}(\GG) \otimes V_{\lambda})
    = 
    \bigoplus_{\pi}m_{\cusp}( \pi) \pi_f \otimes H^*(\pfr, \lfr, \pi_{\infty} \otimes V_{\lambda}), 
\end{equation} 
where $m_{\cusp}(\pi)$ is the finite multiplicity with which $\pi$ occurs in $\Acal_{\cusp}(\GG)$. Putting together~\eqref{eq-harmonic-cusp-into-shimura}, ~\eqref{eq-harmonic-cusp-lie-alg-cohom},~\eqref{eq-space-auto-forms} and \Th~\ref{th-lie-algebra-cohomology} yields the desired embedding~\eqref{eq-gofaf-embedding}.
\end{proof}

\subsection{Applications of the Satake isomorphism} 
\label{sec-satake}  Let $v$ be a non-archimedean place of $\QQ$. Assume $v \not \in \Ram(\GG) \cup \{p\}$ and let $\frobv$ be a geometric Frobenius at $v$. Recall $\iota:\qpbar \stackrel{\sim}{\to} \CC$ and write $\Hcal_{v,\CC}:=\Hcal_v \otimes_{\zp, \iota} \CC$.

Let $\lbfgz$ be the dual group of $\GG$ over $\CC$, \ie the connected, reductive $\CC$-group whose root datum is dual to $\Rcal \Dcal(\GG, \TT)$ (\S\ref{sec-root-data}). Let $\lbfg=\lbfgz \rtimes \galq$ (resp. $\lbfgv=\lbfgvz \rtimes \galqv$) denote the Galois form of the $L$-group of $\GG$ (resp. $\GG_v$), \cf \cite[\S2]{Borel-Automorphic-L-functions}. Similar to \cite[\S2.1]{Buzzard-Gee-conjectures}, we view $\lbfg$ and $\lbfgv$ as group schemes over $\CC$ with component groups $\galq$ and $\galqv$, respectively\footnote{The group denoted $\lbfg$ in \cite{Borel-Automorphic-L-functions} is the topological group we denote $\lbfgc$.}. A representation of $\lbfg$  will always mean a morphism of $\CC$-group schemes $r:\lbfg \to GL(m)$ for some $m\geq 1$, which factors through $\lbfgz \rtimes \gal(F/\QQ)$ for some finite, Galois extension $F$ over which $\GG$ splits; representations of $\lbfgv$ are defined analogously (\cf \cite[2.6]{Borel-Automorphic-L-functions}). Given a representation $r:\lbfg \to GL(m)$ of $\lbfg$, we denote by $r_v:\lbfgv \to GL(m)$ the representation of $\lbfgv$ obtained by restriction.

Let $\rfdlbfgv$ denote the subalgebra of the group algebra $\zplbfgv$ generated by the characters  representations of $\lbfgv$ (as just defined). 
Let $\rfdsslbfgv$ be the algebra obtained by considering elements of $\rfdlbfgv$ as functions on the set of $\lbfgvz(\CC)$-conjugacy classes in the coset $\lbfgvz \rtimes \frobv$ and then restricting to the subset of semisimple $\lbfgvz(\CC)$-conjugacy classes. 
As explained in  \cite[\S\S6-7]{Borel-Automorphic-L-functions}, composing the Satake isomorphism with the isomorphism of \Prop~6.7 of \loccit gives a canonical identification\footnote{The isomorphism from \loccit is with $\CC$-coefficients, \ie $\Hcal_{v,\CC} \stackrel{\sim}{\to} \rfdsslbfgv_{\CC}$. However the integral version~\eqref{eq-satake} holds because the Satake transform is defined over $\zp[\sqrt{v}]$, \cf \cite[Proof of Lemma~V.1.6]{Scholze-torsion}.}
\begin{equation}
\label{eq-satake}
\Hcal_v[\sqrt{v}] \xrightarrow{\sim}  \rfdsslbfgv[\sqrt{v}].
\end{equation}

We state two immediate, well-known consequences of~\eqref{eq-satake} which will play a crucial role in characterizing the Galois (pseudo-)representations we shall construct.

First, there is a canonical bijection between the set of (complex) characters of $\Hcal_{v,\CC}$ and the set of $\lbfgvzc$-conjugacy classes of semisimple elements in $\lbfgvz \rtimes \frobv$. 
Since the former set is in bijection with the set of unramified, admissible, complex representations of $\GG(\qv)$, so is the latter. 
If $\pi_v$ is an unramified, admissible, complex representation of $\GG(\qv)$, we denote by $\class(\pi_v)$ the corresponding $\lbfgvzc$-conjugacy class. 
Recall that the correspondence $\pi_v \leftrightarrow \class(\pi_v)$ is given as follows: 
The Hecke algebra $\Hcal_{v,\CC}$ acts on the line $\pi_v^{\Kcal_v}$ by a character $\chi(\pi_v):\Hcal_{v,\CC} \to \CC$. Then $\class(\pi_v)$ is characterized as the unique $\lbfgvzc$-conjugacy class such that the character of $\rfdsslbfgv$ corresponding to $\chi(\pi_v)$ via~\eqref{eq-satake} is given by evaluation at $\class(\pi_v)$.

In addition, for $r:\lbfg \to GL(m)$ as above, let $\class(\pi_v, r_v)$ be the conjugacy class in $\glmc$ generated by the image $r_v(\class(\pi_v))$. Let $\class_{p,\iota}(\pi_v, r_v)$ denote the conjugacy class in $\glmqpbar$ obtained from $\class(\pi_v, r_v)$ via $\iota$.

For all $j \geq 1$, the function $\tr^j(r):\lbfgv(\CC) \to \CC$ defined by $\tilde g\mapsto \tr(r(\tilde g)^j)$ lies in $\rfdsslbfgv$. 
\begin{definition} 
\label{def-hecke-operators} 
Denote by $T^{(j)}_v(r)\in \Hcal_v[\sqrt{v}]$ the Hecke operator associated to $\tr^j(r)$ by \eqref{eq-satake}. 
Write $T^{(j)}_v(r;i,n,\eta)$ for the image of $T^{(j)}_v(r)$ in the coherent cohomology Hecke algebra $\Hcal^{i,n}(\eta)$ \textnormal{(\S\ref{sec-hecke-notation})}. \end{definition}
\begin{rmk} By definition, if $\Hcal \to \Hcal^{i,n}(\eta)$ factors through $\Hcal \to \Hcal^{i',n'}(\eta')$, then $T_v^{(j)}(r;i',n',\eta')$ maps to $T_v^{(j)}(r;i,n,\eta)$ via the induced map $\Hcal^{i',n'}(\eta') \to \Hcal^{i,n}(\eta)$.
\label{rmk-hecke-op-fact}
\end{rmk} 

\subsection{Existence of automorphic Galois representations} 
\label{sec-regular-galois-reps}
Let $\pi$ be an automorphic representation of $\mathbf G$.  Fix a prime $p \not \in \Ram(\pi)$  (\S\ref{sec-def-ramification}). Let $r:\lbfg \longrightarrow GL(m)$ be a representation as defined in \S\ref{sec-satake}. 

It will be convenient to introduce a condition which says that the pair $(\pi, r)$ admits a $p$-adic Galois representation with weak local-global compatibility. If $X$ is a conjugacy class in $\glmqpbar$, then write $X^{\sesi}$ for its semi-simplifcation \ie $X^{\sesi}$ is the unique semisimple conjugacy class with the same characteristic polynomial as $X$. 
\begin{condition}[GalRep-$p$] The pair $(\pi,r)$ satisifies \textnormal{(GalRep-$p$)} if there exists a (necessarily unique) continuous, semisimple Galois representation \begin{equation} R_{p,\iota}(\pi, r):\galq \longrightarrow \glmqpbar \label{eq gal rep pi r}\end{equation} such that,
for every $v \not \in \Ram(\pi) \cup \{p\}$, one has $R_{p,\iota}(\pi, r)(\frobv)^{\sesi}=\class_{p,\iota}(\pi_v, r_v)$ as $\glmqpbar$-conjugacy classes. 
\label{cond-galrep-p} 
\end{condition}
\begin{rmk}
\label{rmk-buzard-gee-galrep-p}
Our formulation of the Langlands correspondence in Condition~\ref{cond-galrep-p} is compatible with the $L$-algebraic conjecture of Buzzard-Gee \cite[3.2.1]{Buzzard-Gee-conjectures} as follows (see also \cite[\Rmk~V.1.5]{Scholze-torsion}):
\begin{enumerate}[label=(\alph*)]
\item Suppose $\pi$ is $L$-algebraic. Assume that, as predicted by Langlands and Buzzard-Gee (and mentioned in \S\ref{sec-intro-galois-to-autom}) there exists
$\rpipi:\galq \to \lbfg(\qpbar)$ such that for every $v \notin \rampi \cup \{p\}$, one has $\rpipi(\frobv)^{\sesi}=\class_{p,\iota}(\pi_v)$  as $\lbfgz(\qpbar)$-conjugacy classes in $\lbfgz(\qpbar) \rtimes \frob_v$. Then 
$R_{p,\iota}(\pi, r)=r \circ \rpipi$. 
\item
\label{item-rmk-functoriality}
Assume (weak) Langlands functoriality holds for $(\pi,r)$ as in \cite[\Conj~6.1.1]{Buzzard-Gee-conjectures}, \ie that there exists an automorphic representation $r_*\pi$ of $GL(m)$ which is a transfer of $\pi$ at infinity and at all unramified places. If $(\pi, r)$ satisfies Condition~\ref{cond-galrep-p}, then $R_{p,\iota}(\pi,r)=R_{p, \iota}(r_*\pi)$.
\end{enumerate}
\end{rmk}

\subsection{Galois pseudo-representations associated to coherent cohomology modulo a prime power} \label{sec-torsion-th}
 Retain the notation of \S\ref{sec-regular-galois-reps} for the rest of \S\ref{sec-galois-reps}. 

\subsubsection{The general Hodge-type case} 
\label{sec-main-results-general} Let $\gx$ be a Hodge-type Shimura datum $\gx$ and $p \notin \Ram(\GG) \cup \{2\}$. Fix an integral model $\Sscr_{\Kcal}$ over $\Ocal_{E,\pfr}$ as in \S\ref{sec-integral-symplectic-embed} and a toroidal compactification $\Sscr_{\Kcal}^{\Sigma}$ as in \S\ref{sec-toroidal-review}. We assume that $\Sigma$ (hence also $\Sscr_{\Kcal}^{\Sigma}$) is smooth, so that we have the action of the Hecke algebra $\Hcal^{i,n}(\eta)$ on $H^i(\Sscr_{\Kcal}^{\Sigma}, \vsubeta)$ as defined in \S\ref{sec-hecke-tor-sh}.  Recall the notion of a $\delta$-regular character (\Def~\ref{def-delta-reg}).

Let $\eta \in \chargpldom$. If $\gx$ is neither of PEL-type nor of compact type, assume   $S_{\Kcal}^{\Sigma}$ satisfies 
Condition~\textnormal{\ref{cond-zero-dim} and $(S_{\Kcal}^{\Sigma}, \eta)$ satisfies Condition \ref{cond-hdi}}.
\begin{theorem}[Torsion, general case]  
\label{th-torsion-general}
 Let $\delta \in \rgeqz$ and $r:\lbfg \to GL(m)$ a representation \textnormal{(\S\ref{sec-satake})}. Suppose that, for every $\delta$-regular, cuspidal, $C$-algebraic automorphic representation $\pi$ with $\pi_{\infty}$ discrete series, the pair $(\pi, r)$ satisfies \textnormal{Condition~\ref{cond-galrep-p} (GalRep-$p$)}. 
Then, associated to every triple $(i, n, \eta)$,  
with  $i \geq 0$ and $n \geq 1$,   
there is a unique, continuous pseudo-representation
\begin{equation} \rpiinetar:\galq \to \mathcal H^{i, n}(\eta), \end{equation} satisfying, for all $j \geq 1$ and all $v \not \in \Ram(\GG) \cup \{p\}$, \begin{equation} \rpiinetar(\frobv^j)=T_v^{(j)}(r; i,n, \eta) 
\label{eq-hecke-compatibility-general}. \end{equation}  
\end{theorem}

Upon consulting the proof of \Th~\ref{th-torsion-general}, it should be clear to the reader that the proof can easily be adapted to treat several variants.
\begin{rmk}[Variant I: Twisting]
\label{rmk-twisting-variant}
Under additional assumptions, one can introduce twisting between $L$-algebraic and $C$-algebraic in several ways, \eg on the "domain" $\GG$ or on the "target" $GL(m)$. 

On the domain: Assume $\GG$ admits a twisting element $\theta$ as in \cite[\Def~5.2.1]{Buzzard-Gee-conjectures};
    for simplicity assume $\GG$ is $\QQ$-split and that the restriction of $r$ to $\lbfgz$ is irreducible of highest weight $\chi \in X^*(\TT)$. Let $T_v^{(j), C}(r):=v^{\langle \chi, \rho-\theta \rangle}T_v^{(j)}(r)$ be the twists of our old Hecke operators (the Tate normalization in \cite[\S8]{Gross-satake-iso}). Mimicking the proof of \Th~\ref{th-torsion-general}, one can prove the twisted variant where we assume instead that for all $(\pi, r)$ as in the theorem, $(\pi \otimes |\cdot |^{\rho-\theta}, r)$ satisfies Condition~\ref{cond-galrep-p} and in the conclusion we replace the Hecke operators $T_v^{(j)(r)}$ with the twists $T_v^{(j),C}(r)$.
    
On the target: Assume a weak transfer $r_*\pi$ exists for all $C$-algebraic, discrete series and non-degenerate LDS (\Rmk~\ref{rmk-buzard-gee-galrep-p}\ref{item-rmk-functoriality}), and suppose that $r_*$ maps $C$-algebraic to $C$-algebraic. Then one may again adapt the proof of \Th~\ref{th-torsion-general} to show that, if for all $(\pi, r)$ as in the theorem Condition~\ref{cond-galrep-p} holds for $(r_*\pi \otimes |\cdot|^{(1-m)/2}, \id)$, then the conclusion holds where we now replace $T_v^{(j)}(r)$ with $v^{(1-m)/2}T_v^{(j)}(r)$.

In terms of twisting, the optimal generalization of \Th~\ref{th-torsion-general} may be one formulated in terms of a $z$-extension of $\GG$ which admits a twisting element, following \cite[\S5]{Buzzard-Gee-conjectures} (and what Buzzard-Gee call the "$C$-group" of $\GG$). However, it seems this may require extending some of our results on Shimura varieties to $z$-extension of $\GG$ and we have not attempted to carry this out.

\begin{rmk}[Variant II: $\galf$]
\label{rmk-galf-variant}
Another notable variant is to replace $\galq$ by $\galf$ for some number field $F$ which plays a special role for the Shimura variety $\shgx$. An example which incorporates both variants for unitary similitude groups is given by \Th~\ref{th-torsion-unitary} below. In the Hilbert case, taking $F$ to be the totally real field recovers the main result of \cite{Emerton-Reduzzi-Xiao}. \label{rmk changing the field} \end{rmk}

\end{rmk}

\begin{rmk}
\label{rmk-gsp4}
In the special case where $\GG=GSp(4)$ and $r$ is the `standard' four-dimensional representation (which coincides with the spin representation since $g=2$), both twisted variants of  \Th~\ref{th-torsion-general} apply unconditionally (a twisting element exists and $C$-algebraic is mapped to $C$-algebraic). The hypothesis holds with $\delta=0$. This can be deduced in two ways: By using the work of Arthur \cite{arthur-gsp4} to transfer to $GL(4)$ and then applying Shin's result \cite{Shin-Galois-reps-cpt-Shimura-varieties}, or by the work of Weissauer \cite{weissauer-four-dim} and Laumon \cite{Laumon-siegel-cohomology,Laumon-siegel-zeta}. (The Conditions~\ref{cond-zero-dim},~\ref{cond-hdi} hold because $\gx$ is of PEL-type.)
\end{rmk}

\subsubsection{Unitary similitude groups}
\label{sec-torsion-unitary}
Let $\gx$ be an arbitrary Shimura datum of PEL type A. Then $\GG$ is an inner form of the quasi-split unitary group associated to a quadratic extension of an imaginary CM field $F$ over its totally real subfield $F^+$.

Given $v \not \in \Ram(\GG) \cup \{p\}$, a prime $w$ of $F$ above $v$ and $j \geq 1$, let $T_w^{(j), U}$ be the Hecke operator defined between Lemma 6.1 and Lemma 6.2 of \cite{HLTT} and denoted $T_v^{(i)}$ there. Let
$T_w^{(j), U}(i,n, \eta)$ be the image of $T_w^{(j), U}$ in $\Hcal^{i,n}(\eta)$.

\begin{theorem}[Torsion, unitary case] 
\label{th-torsion-unitary}
  For every triple $(i, n, \eta)$, with $i \in \zgeqz$, $n \in \zgeqo$ and $\eta \in \chargpldom$, there exists a unique pseudo-representation \begin{equation} \rpiineta:\galf \to \mathcal H^{i,n}(\eta), \end{equation} satisfying, for all $j \geq 1$ and all $w$ above some $v \not \in \Ram(\GG) \cup \{p\}$, \begin{equation} \rpiineta(\frob_w^j)=T_w^{(j), U}(i,n, \eta) \label{eq hecke compatibility unitary}. \end{equation}
\label{th torsion unitary} \end{theorem}

\subsection{Galois representations associated to non-degenerate LDS}
\label{sec-nondeg-lds}
\subsubsection{The Hodge-type case}
Return to the setting of \S\ref{sec-main-results-general}. If $\gx$ is neither of PEL-type nor of compact type, assume   $S_{\Kcal}^{\Sigma}$ satisfies 
Condition~\textnormal{\ref{cond-zero-dim}} for all sufficiently small $\Kcal^p$.
\label{sec-nondeg-LDS-th}
\begin{theorem}[LDS, Hodge case] 
\label{th-nondeg-lds-general} 
 Suppose there exists $\delta \in \rgeqz$ and $r: \lbfg \to GL(m)$ such that, for every $\delta$-regular, cuspidal, $C$-algebraic automorphic representation $\pi'$ with $\pi'_{\infty}$ discrete series, the pair $(\pi', r)$ satisfies \textnormal{Condition~\ref{cond-galrep-p} (GalRep-$p$)}. 
 Let $\pi$ be a cuspidal automorphic representation of $\GG$ with $\pi_{\infty}=\pi(\lambda, \Ccal)$  a $C$-algebraic, non-degenerate LDS. If $p \notin \Ram(\pi)$ and $(S_{\Kcal}^{\Sigma}, -w_{0, \LL}\lambda-\rho)$ satisfies \textnormal{Condition~\ref{cond-hdi}}, then the pair $(\pi, r)$ satisfies \textnormal{(GalRep-$p$)}.
\end{theorem}

As in \Rmks~\ref{rmk-twisting-variant}-\ref{rmk-gsp4}, one has variants with twisting between $L$ and $C$-algebraic, replacing $\galq$ with $\galf$ for suitable $F$ related to $\gx$ and one deduces an unconditional result for $\GG=GSp(4)$, $r$ the `standard' four-dimensional representation.

\begin{rmk}
\label{rmk-kret-shin}
It is interesting whether \Th~\ref{th-nondeg-lds-general} may be used to extend \cite{Kret-Shin-spin-valued-Galois-reps} to produce $GSpin(2g+1)$-valued Galois representations for non-degenerate LDS.
\end{rmk}

\subsubsection{Unitary Similitude Groups} \label{sec main results unitary} Return to the setting of \S\ref{sec-torsion-unitary}: $\gx$ is of PEL-type $A$ associated to a CM extension $F/F^+$. 

We have found useful the description of local base change from unitary similitude groups to $GL(m)$ at unramified primes in \cite[\S1.3]{HLTT}. \footnote{This seems to be one of many things that has been potentially ``well-known to experts'' for a long time, but difficult to extract from the literature prior to \cite{HLTT}.}
Let $\pi$ be a cuspidal automorphic representation of $\mathbf G$. For every $v \not \in \Ram(\pi)$ and $w$ a place of $F$ above $v$, define the base change of $\pi_v$, denoted $\bc(\pi_v)$, and its $w$-part $\bc(\pi_v)_w$ as in \cite[\S1.3]{HLTT}. Write $\rec_{F_w}$ for the local Langlands correspondence, normalized as in {\rm \cite{harris-taylor-book}}. Let $W_{F_{w}}$ be the Weil group of $F_{w}$. A superscript $(-)^\sesi$ will denote semi-simplification.

\begin{theorem}[LDS, unitary case]
Suppose $\pi$ is a cuspidal, $C$-algebraic automorphic representation of $\mathbf G$, such that $\pi_{\infty}$ is a non-degenerate  LDS. Then, for all $p \not \in \Ram(\pi)$, there exists a unique continuous, semisimple Galois representation
\begin{equation}\rpipi:\galf \longrightarrow  \glmqpbar\label{eq gal rep lds} \end{equation} such that, for all primes $w$ of $F$ which lie over some prime $v \not \in \Ram(\pi)$,
 the representation $\rpipi$ is unramified at $w$ and there is an isomorphism of Weil-Deligne representations
 \begin{equation} (\rpipi | _{W_{F_{w}}})^{\sesi}\cong \iota^{-1}{\rec_{F_w}}\big( \bc(\pi_{v})_w \otimes |\cdot|_{w}^{\frac{1-m}{2}}\big). \end{equation}

\label{th-nondeg-lds-unitary} \end{theorem}
\subsubsection{Remarks about Hodge-Tate weights}
\label{sec-hodge-tate}
Let $\pi$ be as in \Th~\ref{th-nondeg-lds-general} or \Th~\ref{th-nondeg-lds-unitary}. Assume $\pi$ is $L$-algebraic. The Hodge-Tate cocharacter of $\rpipi$ is conjectured to be the infinitesimal character $\chi_{\infty}$ of $\pi_{\infty}$, when viewed as a cocharacter of $\lbfgz$, \cite[\Rmk~3.2.3]{Buzzard-Gee-conjectures}. When $\pi$ is $C$-algebraic but not $L$-algebraic, after possibly passing to a $z$-extension of $
\GG$, one must apply twisting to reduce to the $L$-algebraic case, as explained in
\loccit

By the first author's work \cite{Goldring-LDS-nondeg-deg-functoriality}, the condition "$\pi_{\infty}$ is a non-degenerate LDS" is equivalent to the following condition on the Hodge-Tate cocharacter: The adjoint group of $\cent_{\lbfgz}(\chi_{\infty})$ is a product of copies of $PGL(2)$. 

Concretely, assume that $\GG$ is a form of $GL(n)$ (resp. $SO(2n)$, $SO(2n+1)$, $Sp(2n))$ and that $r$ is the `standard' representation of $\lbfg$ of dimension $n$ (resp. $2n$, $2n$, $2n+1$). Then "$\pi_{\infty}$ non-degenerate LDS" conjecturally means that every Hodge-Tate weight of $R_{p,\iota}(\pi, r)$ has multiplicity $\leq 2$, except that for $\GG_{\CC}=Sp(2n)$ (resp. $\GG_{\CC}=SO(2n)$) the weight $0$ may have multiplicity $3$ (resp. $4$). Recall that, by the standard dictionary between Hodge-Tate weights and Hodge numbers, this means that if $R_{p,\iota}(\pi, r)$ is the $p$-adic realization of a Motive $M$, then the Hodge numbers $h^{p,q}$ of the Betti realization of $M$ are all $\leq 2$ (with the same exceptions for $Sp(2n)$ and $SO(2n)$ as before).

By contrast, in the same examples, the condition "$\pi_{\infty}$ is a holomorphic LDS" conjecturally corresponds to the stricter restriction that all Hodge-Tate weights have multiplicity 1, except possibly for one weight of multiplicity $2$ if $\GG_{\CC}=GL(n)$, the weight $0$ with multiplicity $3$ if $\GG_{\CC}=Sp(2n)$ and two weights of multiplicity $2$ if $\GG_{\CC}=SO(2n+1)$ or $\GG_{\CC}=SO(2n)$.  In the other direction, "$\pi_{\infty}$ an arbitrary LDS" (conjecturally) imposes no restriction on the multiplicities of the Hodge-Tate weights.

\subsection{Proof of \textnormal{\Th~\ref{th-torsion-general}} and \textnormal{\Th~\ref{th-nondeg-lds-general}} } 
\label{sec-pf-torsion-lds}
The results of \S\S\ref{sec-torsion-th}-\ref{sec-nondeg-lds} will be deduced from our factorization theorem (\Th~\ref{th-reduction-to-h0}) coupled with the previously known \Cor~\ref{cor-gofaf-emebedding} on which automorphic representations appear in the coherent cohomology of Shimura varieties. The proofs in the case of general groups and in that of unitary groups are almost identical. The only difference is that in the former case we assume that some very regular $\pi$ satisfy  Condition~\ref{cond-galrep-p}, while  in the latter case this is given to us by \Cor~1.3 of \cite{HLTT}. Moreover, the argument we use is analogous to the one introduced by Taylor \cite{Taylor-GSp4}, and then applied in 
\cite{Jarvis-hilbert-LDS, Goldring-Galois-reps-HLDS,Goldring-Nicole-mu-Hasse,HLTT}. 
For these reasons, we only treat the case of general groups to avoid repetition.

\begin{rmk} Note that  \cite[\Cor~1.3]{HLTT} is a concise combination of Shin's results \cite[\Th~1.2]{Shin-Galois-reps-cpt-Shimura-varieties} and 
\cite[\Th~A.1]{Shin-appendix-Goldring-HLDS}. These build on the work of many people, see also \Rmk~\ref{rmk shin labesse chenevier harris}. 

\end{rmk} 
Consider first the case of torsion. 

\begin{proof}[Proof of \textnormal{\Th~\ref{th-torsion-general}:}]
Fix $(r;i,n,\eta)$ and $\delta \in \rgeqz$ as in \Th~\ref{th-torsion-general}. Put  \[\delta'=\delta+1+\max\{|\langle \rho, \alpha^{\vee} \rangle | | \alpha \in \Phi(\GG,\TT)\}.\]
By \Th~\ref{th-reduction-to-h0}\ref{item-cone-phi-regular} there exists $\nu \in F(i,n,\eta)$ which is $\delta'$-regular. Hence $-w_{0,\LL}(\nu+\rho)$ is $\max\{\delta,1\}$-regular.

Since $\eta_{\omega} \in \Ccal$ (\S\ref{subsection global sec cone}) and $\nu \in F(i,n,\eta)$, 
\Th~\ref{th-reduction-to-h0}\ref{item-cone-maximal}  implies that there exists $k \in \zgeqo$ such that $\nu+ak \eta_{\omega} \in F(i, n,\eta)$ for all $a \in \zgeqo$. 
Since $\nu$ is $\delta'$-regular, there exists $a_0 \in \zgeqo$ such that  
$\nu+ak\eta_{\omega}$ is $\delta'$-regular for all $a \geq a_0$. Thus 
$ -w_{0,\LL}(\nu+\rho+ak\eta_{\omega})$  is $\max\{\delta,1\}$-regular for all $a \geq a_0$. By \Cor~\ref{cor surjective mod pn}, there exists $a_1 \in \zgeqo$ such that the reduction map \begin{equation}
\label{eq-pf-tor-red-surj}
H^0(\Shktor, \vsub(\nu+ak\eta_{\omega})) \to H^0(\Shktorn, \vsub(\nu+ak\eta_{\omega}))
\end{equation} is surjective for all $a \geq a_1$. 

From now on, fix $k$ as above and assume $a \geq \max\{a_0,a_1\}$. Then 
\begin{enumerate}[label=(\roman*)] 
\item $\nu+ak\eta_{\omega} \in F(i,n,\eta)$,
\item $-w_{0,\LL}(\nu+\rho+ak\eta_{\omega})$  is $\max\{\delta,1\}$-regular, and  
\item the map ~\eqref{eq-pf-tor-red-surj} is surjective. 
\end{enumerate}

Suppose $f \in H^0(\Shktor, \vsub(\nu+ak\eta_{\omega}))$ is a Hecke eigenform and let $\pi(f)$ be the cuspidal automorphic representation of $\mathbf G$ that it generates. 
By the Casselman-Osborne Theorem (\cf \cite[\Prop~ 3.1.4]{Harris-Ann-Arbor}), 
the infinitesimal character $\chi_{\infty}(f)$ of $\pi(f)$ is given by $\chi_{\infty}(f)=-w_{0,L}(\nu+ak\eta_{\omega}+\rho)$. Hence  $\chi_{\infty}(f)$ is $1$-regular by construction.  Since $\pi(f)$ (hence also $\pi(f)_{\infty}$) is unitary, it follows from a result of Salamanca-Riba \cite[\Th~1.8]{Salamanca-Riba-Aqlambda} that  the archimedean component $\pi(f)_{\infty}$ is discrete series.
Therefore the hypothesis of \Th~\ref{th-torsion-general} concerning (GalRep-$p$) applies to $f$. 

Applying this hypothesis to all eigenforms in 
$H^0(\Shktor, \vsub(\nu+ak\eta_{\omega}))$ yields a unique continuous, semisimple Galois representation 
\begin{equation} \rho_{a}:\galq \longrightarrow GL(m, \Hcal^{0,+}(\nu+ak\eta_{\omega}) \otimes \qpbar) \label{eq gal rep char zero hecke algebra}\end{equation} 
such that $(\tr \rho_{a})(\frobv^j)=T^{(j)}_v(r;0,+,\nu+ak\eta_{\omega})$  for all $j \in \zgeqo$ (\S\ref{sec-satake}). Since $\tr \rho_a \subset \Hcal^{0,+}(\nu+ak\eta_{\omega})$, there is an induced pseudo-representation
\begin{equation} \rho_{a}^{\pseudo}:\galq \longrightarrow \Hcal^{0,+}(\nu+ak\eta_{\omega}) \label{eq pseudo rep}.\end{equation} 

Since $a$ is chosen so that~\eqref{eq-pf-tor-red-surj} is surjective, Lemma~\ref{lem-fact-hecke} implies that  $\Hcal \to \Hcal^{0,n}(\nu+ak\eta_{\omega})$ factors through $\Hcal \to \Hcal^{0,+}(\nu+ak\eta_{\omega})$. Since $\nu+ak\eta_{\omega} \in F(i,n,\eta)$, $\Hcal \to \Hcal^{i,n}(\eta)$ factors through $\Hcal \to \Hcal^{0,n}(\nu+ak\eta_{\omega})$.
Composing the two factorizations shows that $\Hcal \to \Hcal^{i,n}(\eta)$  factors through $\Hcal \to \Hcal^{0,+}(\nu+ak\eta_{\omega})$, yielding a map $$\Hcal^{0,+}(\nu+ak\eta_{\omega}) \to \Hcal^{i,n}(\eta).$$ Further composing with $\rho_{a}^{\pseudo}$ gives a continuous pseudo-representation $\rpiinetar$ satisfying~\eqref{eq-hecke-compatibility-general}(see \Rmk~\ref{rmk-hecke-op-fact}). Uniqueness follows from the Tchebotarev density theorem.
\end{proof}

The case of LDS will now be deduced from that of torsion.

\begin{proof}[Proof of \textnormal{\Th~\ref{th-nondeg-lds-general}:}] 
Let $\pi$ be as in \Th~\ref{th-nondeg-lds-general}, with $\pi_{\infty}=\pi(\lambda, \Ccal)$.
Let $\eta:=-w_{0,\LL}\lambda-\rho$ and $i:=\cd(\Ccal)$.
Since $(\lambda, \Ccal)$ is non-degenerate, $\eta \in \chargpldom$. 
For every $v \not \in \Ram(\pi)$, let $\Kcal_v$ be a hyperspecial subgroup of $\gofqv$. Since $\pi$ is unramified at $p$, this includes a hyperspecial subgroup $\Kcal_p \subset \gofqp$. Let $\gamma_{\pi}' \in \bigcap_{v \not \in \Ram(\pi)} \pi_f^{\Kcal_v}$; thus $\gamma_{\pi}'$ is a Hecke eigenform for $\Hcal$ satisfying $\gamma_{\pi}' \in \pi_f^{\Kcal_p}$. 
Choose a sufficiently small open compact subgroup $\Kcal^p \subset \gofafp$ of the form $\Kcal^p=\prod_{v \neq p} \Kcal'_v$ with $\Kcal'_v \subset \gofqv$  so that \begin{enumerate}[label=(\alph*)]
    \item we may apply \S\ref{sec-toroidal-review} to the integral model $\Sscr_{\Kcal}$,
    \item for all $v \not \in \Ram(\pi) \cup \{p\}$, one has $\Kcal_v' \subset \Kcal_v$.
\end{enumerate}  
By~\Cor~\ref{cor-gofaf-emebedding}, the vector $\gamma_{\pi}'$ is a Hecke eigenclass in $\bar{H}^i(\Shktor \otimes \qpbar, \veta)$ for some smooth toroidal compactification $\Sscr_{\Kcal}^{\Sigma}$ (recall from \S\ref{sec-hecke-tor-sh} that refining $\Sigma$ does not change the cohomology). 

First we explain how to pass from the "interior cohomology"~\eqref{eq-interior-cohom} to that of the subcanonical extension.
Let $\bar\Hcal^{i,0}(\eta)$ be the image of the Hecke algebra $\Hcal$ in $\End (\bar{H}^i(\Shktor \otimes \qpbar, \veta))$ By definition, one has the surjection $H^i(\Shktor \otimes \qpbar, \vsubeta) \twoheadrightarrow \bar{H}^i(\Shktor \otimes \qpbar, \veta)$. Thus $\Hcal \to \bar\Hcal^{i,0}(\eta)$ factors through $\Hcal^{i,0}(\eta)$. Therefore there exists an eigenclass $\gamma_{\pi} \in H^i(\Shktor \otimes \qpbar, \vsubeta)$ with the same eigenvalues as $\gamma_{\pi}'$.
After replacing $\Ocal_{\pfr}$ by a finite extension, we may assume that $\gamma_{\pi}$ lies in $H^i(\Shktor, \vsubeta)$. 

Let $\theta_+: \Hcal^{i,+}(\eta) \longrightarrow \Ocal_{\pfr} $ be the eigenvalue map of $\gamma_{\pi}$ and $ \theta_n: \Hcal^{i,n}(\eta) \longrightarrow \mathcal O_{\pfr}/\pfr^n $  its reduction modulo $\pfr^n$. 
By definition of the correspondence $\pi_v \leftrightarrow \class(\pi_v)$ (recalled in \S\ref{sec-satake}), $$\theta_+(T^{(j)}_v(r;i,+,\eta))=\iota^{-1}\tr (r_v(\class(\pi_v)^j))=\tr(\class_{p,\iota}(\pi_v, r_v)^j).$$ 
By the assumption that $S_{\Kcal}^{\Sigma}$ satisfies Condition~\ref{cond-zero-dim} and that $(S_{\Kcal}^{\Sigma}, \eta)$ satisfies Condition~\ref{cond-hdi}, \Th~\ref{th-torsion-general} applies to $H^i(\Sscr_{\Kcal}^{\Sigma,n}, \vsubeta)$ and yields a continuous pseudo-representatation $\rpiinetar$ satisfying~\eqref{eq-hecke-compatibility-general}. Since $\theta_n(T_v^{(j)}(r;i,n,\eta))$ is the reduction modulo $\pfr^n$ of $\theta_+(T_v^{(j)}(r;i,+,\eta))$,
the $\theta_n \circ \rpiinetar$ form a $\pfr$-adic system of pseudo-representations satisfying $$\theta_n \circ \rpiinetar(\frobv^j) \equiv \tr (\class_{p,\iota}(\pi_v, r_v)^j) \pmod{\pfr ^n}.$$ Their limit gives a continuous pseudo-representation $\chi:\galq \to \Ocal_{\pfr} \hookrightarrow \qpbar$ satisfying $$\chi(\frobv^j)=\tr (\class_{p,\iota}(\pi_v, r_v)^j)$$
for all $j \geq 1$.
By \cite[\Th~1]{Taylor-GSp4}, the $\qpbar$-valued pseudo-representation $\chi$ is the trace of a unique, semisimple (true) representation $R_{p,\iota}(\pi, r)$ satisfying Condition~\ref{cond-galrep-p} (GalRep-$p$) (for a conjugacy class $X \subset \glnqpbar$, the sequence of traces $(\tr (X^j))_{j \geq 1}$ uniquely determines the semi-simplification $X^{\sesi}$). 
Uniqueness follows again from the Tchebotarev density theorem. \end{proof}

\begin{rmk} In view of the ``change of weight'' afforded by 
\Th~\ref{th-reduction-to-h0}\ref{item-cone-phi-regular}, in the case of unitary similitude groups we only use a weak version of \cite[\Cor~1.3]{HLTT}, where the archimedean component is $\delta$-regular. By applying \Th~\ref{th-reduction-to-h0}\ref{item-cone-phi-regular} with $\delta=1$, it suffices for us to combine Shin's earlier work \cite[\Th~1.2]{Shin-Galois-reps-cpt-Shimura-varieties} with Labesse's restricted base change \cite{Lab}. 
In particular our results use neither Shin's extended base change \cite[\Th~A.1]{Shin-appendix-Goldring-HLDS}, 
nor the work of Chenevier-Harris \cite{Chenevier-Harris-automorphic-galois}. \footnote{To be completely precise, Labesse's result has the disadvantage of being stated for unitary groups rather than unitary similitude groups, and to assume $F^+ \neq \mathbf Q$, so in that respect we do use Shin's \cite[\Th~A.1]{Shin-appendix-Goldring-HLDS}, but not concerning the regularity of the archimedean component.  }  \label{rmk shin labesse chenevier harris}\end{rmk}

\section{Systems of Hecke eigenvalues on the generalized superpecial locus} 
\label{sec-serre-letter}
This section is concerned with the generalization of Serre's letter to Tate, \Th~\ref{th-intro-serre-letter}.
\subsection{Statement of the result and a corollary} \label{sec-serre-letter-statement}
 Recall that $S_{\Kcal}$ is the special fiber of the integral model $\Sscr_{\Kcal}$ (\S\ref{sec-univ-Gzip}) at a prime $p>2$ and that the stratum $S_e$ is nonempty and zero-dimensional. "System of Hecke eigenvalues" will always refer to the Hecke algebra $\Hcal$ defined in \S\ref{sec-hecke-abstract}. By \S\S\ref{sec-hecke-open-sh}-\ref{sec-hecke-tor-sh}, for $? \in \{\can, \sub\}$ one has the $\Hcal$-modules
 $$ M^i:=\bigoplus_{\eta\in \chargpldom}H^i(\Shko, \veta), \hspace{.25cm} M^i_?:=\bigoplus_{\eta\in \chargpldom}H^i(S_{\Kcal}^{\Sigma}, \Vscr^?(\eta)), 
\hspace{.25cm} \mbox{ and } \hspace{.25cm}
M_e:=\bigoplus_{\eta \in \chargpldom}H^0(S_e, \veta). $$
\begin{theorem}
\label{th-serre-letter}
If $\gx$ is neither of compact-type, nor of PEL-type, then assume:
\begin{enumerate}
    \item[($\star$)] There exists a $\gofafp$-equivariant Cartier divisor $D'$ such that $D'_{\red}=D$ and $\omega^k(-D')$ is ample on $\Sscr_{\Kcal}^{\Sigma}$ for all $k \gg 0$. 
\end{enumerate} 

Then each of the $\Hcal$-modules $M^0, M^0_{\sub}, M^0_{\can}$ and $M_e$ admits precisely the same systems of Hecke eigenvalues. \end{theorem}
\begin{rmk} \
\label{rmk-cartier-ample}
\begin{enumerate} [label=(\alph*)]
\item If $\gx$ is of compact type, then $(\star)$ is trivially satisfied with $D=D'=0$, since then $\omega$ is ample on $\Sscr_{\Kcal}$.
\item
\label{item-Cartier-ample}
If $\gx$ is of noncompact, PEL type, then \cite[\Th~7.3.3.4]{Lan-book-thesis} (explained further in \cite[(2.1)]{Lan-Suh-lifting-cusp-forms})  produces a $\gofafp$-equivariant Cartier divisor $D'$ satisfying ($\star$) for $\Sigma$ as above which is also {\em projective} \cite[\Def~7.3.1.1]{Lan-Suh-lifting-cusp-forms}.  The construction of $D'$ should apply in the general Hodge case to the compactifications of \cite{MadapusiHodgeTor} (still associated to projective $\Sigma$). Since we were unable to find a precise reference to this effect, the existence of such $D'$ appears as hypothesis ($\star$) in \Th~\ref{th-serre-letter}.
\item 
\label{item-serre-no-koecher}    
Note that \Th~\ref{th-serre-letter} assumes neither Koecher's Principle nor Conditions~\ref{cond-zero-dim}, \ref{cond-hdi} for $\Shko$.

\end{enumerate}
\end{rmk}

An immediate consequence of \Rmk~\ref{rmk-reduction-to-h0-mod-p} and \Th~\ref{th-serre-letter} is:
\begin{corollary} 
\label{cor-serre-letter-finiteness}
If $i=0$, make the assumptions of \textnormal{\Th~\ref{th-serre-letter}}; if $i>0$, then in addition make the assumptions of \textnormal{\Th~\ref{th-reduction-to-h0}}. Then only a finite number of systems of Hecke eigenvalues appear in \begin{equation}
\label{eq-serre-letter-finiteness}
\bigoplus_{i \geq 0} M^i_{\sub} \oplus M^i_{\can} \oplus M^0.
\end{equation} 

\end{corollary}

\begin{rmk} Using Lan's Higher Koecher Principle \cite{Lan-higher-koecher} one obtains results analogous to \Cor~\ref{cor-serre-letter-finiteness} for $M^j$, $j>0$, under additional hypotheses about the codimension of the boundary of $\Shkmino$.
\end{rmk}

\subsection{Proof of \Th~\ref{th-serre-letter} and \Cor~\ref{cor-serre-letter-finiteness}} The hypotheses of \Th~\ref{th-serre-letter} are assumed throughout this section. Given $\Hcal$-modules $M, M'$, write $M \rightsquigarrow_{\Hcal} M'$ to signify that every system of Hecke eigenvalues appearing in $M$ also appears in $M'$.
\label{sec-pf-serre-letter} We shall prove the following cycle of relations: $$M^0 \rightsquigarrow_{\Hcal} M_e \rightsquigarrow_{\Hcal} M^0_{\sub} \rightsquigarrow_{\Hcal} M^0_{\can} \rightsquigarrow_{\Hcal} M^0.$$ 

The relations $M^0_{\sub} \rightsquigarrow_{\Hcal} M^0_{\can} \rightsquigarrow_{\Hcal} M^0$ are trivial (Lemma~\ref{lem-sub-can-0}). Besides that, the argument is in three steps,  analogous to simplified versions of the steps ``Descent'', ``Weight increase'' and ``Ascent'' in the proof of 
\Th~\ref{th-reduction-to-h0}: 
First we use Ghitza's argument \cite{Ghitza-Siegel-Mod-p-Algebraic} to show that $M^0 \rightsquigarrow_{\Hcal} M_e$ (Lemma~\ref{lem-serre-letter-going-down}). 
Second, Lemma~\ref{lem-serre-letter-increase-weight} uses the nowhere vanishing Hasse invariant $h_e \in H^0(S_e, \omega^{N_e})$ to exhibit $m \geq 1$ such that, for all $\eta$,  the systems appearing in $H^0(S_e, \veta)$ are the same as those appearing in $H^0(S_e, \veta \otimes \omega^{m})$. 
Finally, Lemma~\ref{lem-serre-letter-going-up} shows that if $m'$ is a sufficiently large multiple of $m$, then every system appearing in $H^0(S_e, \veta \otimes \omega^{m'})$ also appears in $H^0(S_{\Kcal}^{\Sigma}, \vsubeta \otimes \omega^{m'})$ ). The one and only step which uses the hypothesis on the existence of a Cartier divisor $D'$ as in \Th~\ref{th-serre-letter} is Lemma~\ref{lem-serre-letter-going-up}.

\begin{rmk}
\label{rmk-Ghitza-mistake}
The argument given in \cite[\Prop~24]{Ghitza-Siegel-Mod-p-Algebraic} and cited in~\cite[\Prop~5.17]{Reduzzi-PEL-Mod-p} for Lemma~\ref{lem-serre-letter-going-up} seems to contain a serious error: 
If $j:S_{\Kcal} \to \pn$ is the immersion associated to a very ample power $\omega^m$ of $\omega$, 
then it is claimed in \loccit that every $\Vscr(\lambda)$ admits a locally free extension to $\pn$. Already for the Siegel scheme $S_{g, \Kcal}$ with $g \geq 2$  this cannot be: It would contradict the well-known fact that most $\Vscr(\lambda)$  fail to admit a locally free extension to $S_{\Kcal}^{\min}$. 

\end{rmk}

\begin{lemma}
\label{lem-sub-can-0} 
One has  $H^0(S_{\Kcal}^{\Sigma}, \vsubeta) \rightsquigarrow_{\Hcal} H^0(S_{\Kcal}^{\Sigma}, \vcaneta)$ and $H^0(S_{\Kcal}^{\Sigma}, \vcaneta) \rightsquigarrow_{\Hcal} H^0(S_{\Kcal}, \veta)$.
\end{lemma}
\begin{proof}
The $\gofafp$-equivariant short exact sequence $$0 \to \vsubeta \to \vcaneta \to \vcaneta|_D \to 0$$ induces a Hecke-equivariant injection $
H^0(S_{\Kcal}^{\Sigma}, \vsubeta) \hookrightarrow H^0(S_{\Kcal}^{\Sigma} \vcaneta)$. Since $S_{\Kcal}$ is open dense in $S_{\Kcal}^{\Sigma}$, restriction gives another Hecke-equivariant injection $H^0(S_{\Kcal}^{\Sigma}, \vcaneta) \hookrightarrow H^0(S_{\Kcal}, \veta)$. Applying Lemma~\ref{lem-fact-hecke} to these injections of $\Hcal$-modules gives the result. 
\end{proof}

To show that $M^0 \rightsquigarrow_{\Hcal} M_e$, we shall need an auxiliary lemma about "good filtrations" of $L$-modules. Let $V$ be an $L$-module (not necessarily finite-dimensional). 
Recall that an ascending filtration $0=V_0 \subset V_1 \subset \cdots \subset V_{i} \subset \cdots \subset V$ of $V$ by $L$-submodules 
$V_i$ is called a good filtration 
if $\bigcup_i V_i =V$ and every graded piece $V_{i}/V_{i-1} \cong V_{\eta_i}$ for some 
$\eta_i \in X^*_{+,L}(T)$, \cite[II, 4.1.6]{jantzen-representations}. Recall that the assumption $p>2$ is still in force.
\begin{lemma}
\label{lem-good-filtration}
For every $n \geq 1$ and $\eta \in X^*_{+,L}(T)$, the $L$-module $\Sym^n((\Lie(G)/\Lie(P))^{\vee}) \otimes V_{\eta}$ admits a good filtration. 
\end{lemma}
\begin{proof} The proof amounts to combining a result of Andersen-Janzten \cite{Andersen-Jantzen-cohomology-induced-reps-good-filtration} on good filtrations for $\Sym^n \Lie(G)$  with closure properties for good filtrations given by Donkin \cite{Donkin-good-filtrations-LNM}.

Since $\gx$ is of Hodge type, $\GG$ (and hence also $G$ and $L$) are of classical type (\ie have no factors of exceptional type) \cite[1.3.10]{Deligne-Shimura-varieties}. Therefore every prime $p>2$ is good for $G$ in the sense of Steinberg \cite[\S4.4]{Andersen-Jantzen-cohomology-induced-reps-good-filtration}: 
The $\Delta$-dominant $T$-weights $\chi$ of $\Lie(G)$ all satisfy $0 \leq \langle \chi, \alpha^{\vee} \rangle \leq p-1$ for all $\alpha \in \Delta$. Let $\tilde G$ be the simply-connected cover of the derived subgroup of $G$ (in the sense of algebraic groups, as in \cite[II, 1.6]{jantzen-representations}).
\begin{comment}
Jantzen Reference checked.
\end{comment} 
Since $p$ is good for $G$, the $\tilde{G}$-module $\Sym^n \Lie(\tilde G)$ admits a good filtration for all $n \geq 1$ by \cite[Proof of \Prop~4.4]{Andersen-Jantzen-cohomology-induced-reps-good-filtration}.

Let $V',V''$ be two $L$-modules. By \cite[4.1(1)]{Andersen-Jantzen-cohomology-induced-reps-good-filtration}, the sum $V \oplus V'$ admits a good filtration if and only if both summands $V,V'$ admit good filtrations. Further, if $V'$ and $V''$ both admit good filtrations, so does $V' \otimes V''$, \cite[\Th~4.3.1]{Donkin-good-filtrations-LNM} (see also \cite[4.1.4(3)]{Andersen-Jantzen-cohomology-induced-reps-good-filtration})\footnote{The theorem of Donkin cited above assumed that $p>2$ when $G$ has factors of type $E_7$ or $E_8$. Mathieu later showed \cite{Mathieu-filtrations-G-modules} that the existence of good filtrations is closed under tensor product without any restriction on $p$ or $G$ (still assumed connected, reductive).}. It follows as in \cite[4.1.4]{Andersen-Jantzen-cohomology-induced-reps-good-filtration} that $\Sym^n(V' \oplus V'')$ admits a good filtration for all $n \geq 1$ if and only if $\Sym^n(V')$ and $\Sym^n(V'')$ both admit good filtrations for all $n \geq 1$.

Let $U^+$ be the unipotent radical of $P^+$. One has $\Lie(G)=\Lie(P) \oplus \Lie(U^+)$ and  $\Lie(P)^{\vee}=\Lie(P^+)$ as $L$-modules. Also $\Lie(G) \simeq \Lie(G)^{\vee}$ as $G$-modules. Thus $(\Lie(G)/\Lie(P))^{\vee}=\Lie(U)$ as $L$-modules. Since $\Lie(G)=\Lie(P^+) \oplus \Lie(U)$ as $L$-modules, we are reduced to showing that $\Sym^n \Lie (G)$ admits a good filtration as $L$-module for all $n \geq 1$. 
\begin{comment} By \cite[\Prop~3.1.1]{Donkin-good-filtrations-LNM} if a V admits a filtration and each graded has a good filtration then V does too.
\end{comment}
By reductions of Donkin, \cite[\Props~3.2.7,~3.4.3]{Donkin-good-filtrations-LNM}, 
\begin{comment}
References to Donkin checked: 3.2.7 is the reduction to the derived subgroup and 3.4.3 states a module admits a good filtration for semisimple $G$ if and only if it does so for the simply-connected cover of $G$.
\end{comment}
since  $\Sym^n \Lie(\tilde G)$ admits a good filtration as a $\tilde{G}$-module, it also admits a good filtration as $G$-module. Since $\Lie(G)$ is the direct sum of its center and $\Lie(\tilde G)$, the result used above about symmetric powers of a sum implies that $\Sym^n \Lie(G)$ admits a good filtration as a $G$-module for all $n \geq 1$. By \cite[\Th~4.3.1]{Donkin-good-filtrations-LNM}, the property of admitting a good filtration is stable under restriction to a Levi subgroup. Thus $\Sym^n \Lie (G)$ admits a good filtration as $L$-module for all $n \geq 1$.  
\end{proof}

\begin{lemma} 
\label{lem-serre-letter-going-down} One has $M^0 \rightsquigarrow_{\Hcal} M_e$. 
\end{lemma}

\begin{proof}
Let $f \in H^0(S_{\Kcal}, \veta)$ be an eigenform with system of eigenvalues $(b_T)_{T \in \Hcal}$.
Let $\Ical$ be the ideal sheaf of $S_e$ in $\Shko$. By \S\ref{sec-univ-Gzip}, the  short exact sequence \begin{equation} 0 \to \mathcal I \otimes \veta \to \veta \to \veta|_{S_e} \to 0. 
\label{eq-ideal-sheaf} \end{equation} is $\gofafp$-equivariant (\S\ref{sec-gofafp-objects-1}). Similarly all the powers $\Ical^j$ are $\gofafp$-equivariant and so are the short exact sequences \begin{equation}
\label{eq-ideal-sheaf-powers}
0 \to \Ical^{j-1} \to \Ical^j \to \Ical^j/\Ical^{j-1} \to 0.
\end{equation}
Since~\eqref{eq-ideal-sheaf} and~\eqref{eq-ideal-sheaf-powers} are $\gofafp$-equivariant, the induced long exact sequences in cohomology are $\Hcal$-equivariant (\S\ref{sec-hecke-open-sh}).
Since $\Shko$ is Noetherian and $S_e$ is nonempty, there exists a largest $j \in \zgeqz$ such that $f \in H^0(\Shko, \mathcal I^j \otimes \veta)$. 
Then the image $\bar{f}$ of $f$ in $H^0(S_e, \mathcal I^j/\mathcal I^{j+1} \otimes \veta)$ is non-zero. 
Since $S_e$ and $\Shko$ are smooth, $\Ical^j/ \Ical^{j+1}=\Sym^j(\Ical /\Ical^2)$. Since in addition $S_e$ is zero-dimensional, the "co-normal exact sequence" of differentials simplifies to a $\gofafp$-equivariant isomorphism $\Ical/\Ical^2 \cong (\Omega^1_{\Shko})|_{S_e}$ of sheaves on $S_e$. 
   Hence $\bar{f} \in H^0(S_e, \Sym^j\Omega^1_{\Shko}\otimes 
\veta)$. 
So the eigensystem $(b_T)_{T \in \Hcal}$ of $f$ also appears in $H^0(S_e, \Sym^j\Omega^1_{\Shko} \otimes \veta)$.

The automorphic vector bundle $\Omega^1_{\Shko}$ is associated to the $L$-module $(\Lie(G)/\Lie(P))^{\vee}=\Lie(U)$ via the construction of \S\S\ref{sec-torsors}, \ref{sec-toroidal-bundles}. Indeed, it is sufficient to check this for the cotangent bundle over $\Sscr_{\Kcal, \CC}$, where it follows trivially from the complex uniformization of $\Sscr_{\Kcal}(\CC)$ and the Borel embedding $\XX \hookrightarrow (\GG/\PP)(\CC)$.
Since the associated bundle construction is an exact tensor functor,  $\Sym^j(\Omega^1_{\Shko})\otimes \veta$ is associated to the $L$-module $\Sym^j((\Lie(G)/\Lie(P))^{\vee}) \otimes V_{\eta}$. 
By Lemma~\ref{lem-good-filtration}, this $L$-module admits a good filtration. Hence the system of Hecke eigenvalues of $f$ appears in $H^0(S_e, \Vscr(\eta'))$ for some $\eta' \in \chargpldom$ and some graded piece $V_{\eta'}$ of a good filtration for $\Sym^j((\Lie(G)/\Lie(P))^{\vee}) \otimes V_{\eta}$ .
\end{proof}

By \Th~\ref{GTHI} (with $w=e$), there exists a nowhere vanishing, Hecke-equivariant section $h_e \in H^0(S_e, \omega^{N_e})$ for some $N_e \geq 1$. The following is then immediate:
\begin{lemma}
\label{lem-serre-letter-increase-weight}
Multiplication by $h_e$ induces a Hecke-equivariant isomorphism 
\[H^0(S_e, \Vscr(\eta)) \stackrel{\sim}{\rightarrow} H^0(S_e, \Vscr(\eta) \otimes \omega^{N_e}).\]
\end{lemma} 
The last step, of going up from $S_e$ to $S_{\Kcal}$, is given by:
\begin{lemma} 
\label{lem-serre-letter-going-up}
For every $\eta \in \chargpldom$ and every system of Hecke eigenvalues $(b_T)_{T \in \Hcal}$ appearing in $H^0(S_e, \Vscr(\eta))$, there exists $m \geq 1$ such that $(b_T)_{T \in \Hcal}$ also appears in $H^0(S_{\Kcal}, \veta \otimes \omega^m)$. 
\end{lemma}

\begin{proof}

Let $f \in H^0(S_e, \veta)$ be an eigenform with eigenvalues $(b_T)$. By Lemma~\ref{lem-serre-letter-increase-weight}, $fh_e^{a} \in H^0(S_e, \veta \otimes \omega^{aN_e})$ is an eigenform with the same eigenvalues $(b_T)$ for all $a\geq 1$. 

Let $\Sigma$ be a smooth, projective, finite, admissible rpcd. Let $D'$ be a $\gofafp$-equivariant Cartier divisor supported on $D$ such that $\omega^k(-D')$ is ample on $\Sscr_{\Kcal}^{\Sigma}$ for all $k \gg 0$, as afforded by hypothesis ($\star$) of \Th~\ref{th-serre-letter} and \Rmk~\ref{rmk-cartier-ample}. Since $S_e \cap D=\emptyset$, one has $\omega^k(-D')|_{S_e}=\omega^k|_{S_e}$ . Let $\Ical^{\Sigma}$ be the ideal sheaf of $S_e$ in $\Shktoro$. Then we have the analogue of~\eqref{eq-ideal-sheaf} for $\Shktoro$: \begin{equation} 0 \to \Ical^{\Sigma} \otimes \vcaneta \to \vcaneta \to \veta|_{S_e} \to 0. \label{eq-ideal-sheaf-tor} \end{equation} By Serre vanishing, if $r$ is sufficently large,  then \begin{equation}
\label{eq-serre-letter-h1-vanish}
H^1(\Shktoro, \mathcal \vcaneta \otimes \Ical^{\Sigma} \otimes \omega^{kr}(-rD'))=0.
\end{equation}
Twisting~\eqref{eq-ideal-sheaf-tor} by $\omega^{kr}(-rD')$ and passing to cohomology, the vanishing~\eqref{eq-serre-letter-h1-vanish} gives a surjection 
\begin{equation} 
\label{eq-surjection-for-serre}
\xymatrix{H^0(\Shktoro, \vcaneta \otimes \omega^{kr}(-rD')) \ar@{>>}[r] & H^0(S_e,\veta \otimes \omega^{kr})}. \end{equation}
Let $j:rD' \to S_{\Kcal}^{\Sigma}$ be the inclusion. The restriction map~\eqref{eq-surjection-for-serre} is the composition of  \begin{equation}
\label{eq-serre-lift-fact-1}
H^0(\Shktoro, \vcaneta \otimes \omega^{kr}(-rD')) \to H^0(\Shktoro, \vcaneta \otimes \omega^{kr})
\end{equation}
  and 
\begin{equation}
\label{eq-serre-lift-fact-2}
H^0(\Shktoro, \vcaneta \otimes \omega^{kr}) \to H^0(\Shko, \veta \otimes \omega^{kr}),
\end{equation} where the first map is induced from $0 \to \Ocal_{S_{\Kcal}^{\Sigma}}(-rD') \to \Ocal_{S_{\Kcal}^{\Sigma}} \to \Ocal_{rD'} \to 0$ and the second is restriction. Since both~\eqref{eq-serre-lift-fact-1} and~\eqref{eq-serre-lift-fact-2} are Hecke-equivariant by the methods of \S\S\ref{sec-hecke-tor-sh}-\ref{sec-hecke-hasse-regular}, so is there composition~\eqref{eq-surjection-for-serre}. 

Therefore $(b_T)$ appears in $H^0(\Shktoro, \vcaneta \otimes \omega^{kr}(-rD'))$. Since the latter is Hecke-equivariantly embedded in $H^0(\Shktoro, \vcaneta \otimes \omega^{kr}(-D))$ and $\vcaneta \otimes \omega^{kr}(-D)=\vsub(\eta) \otimes \omega^{kr}$, we conclude that $(b_T)$ also appears in $H^0(\Shktoro, \vsubeta \otimes \omega^{kr})$.

\end{proof}

\begin{proof}[Proof of \textnormal{\Th~\ref{th-serre-letter}}] 
Apply Lemmas~\ref{lem-sub-can-0},~\ref{lem-serre-letter-going-down},~\ref{lem-serre-letter-increase-weight} and \ref{lem-serre-letter-going-up} consecutively.

\end{proof}
\begin{rmk}
Suppose that the theory of integral models and their compactifications applies to $\gx$ also for $p=2$. 
\eg this is the case if $\gx$ is of PEL-type with no factors of type D. Then it is clear that the proof of Lemma~\ref{lem-serre-letter-going-up} works for $p=2$. However, even in the Siegel case, we don't know whether Lemma~\ref{lem-serre-letter-going-down} remains valid for $p=2$. According to \cite[p.~241]{jantzen-representations}, it seems likely that Lemma~\ref{lem-good-filtration} is false when $p=2$.
\end{rmk}
\begin{proof}[Proof of \textnormal{\Cor~\ref{cor-serre-letter-finiteness}}] Let $d=\dim \Shktoro$.
The canonical bundle of $\Shktoro$ is isomorphic to $\omega^m(-D)$ for some $m \geq 1$, \cite[5.3.11]{MadapusiHodgeTor}. By Serre duality, $H^i(\Shktoro,\vcaneta) \cong H^{d-i}(\Shktoro, \vsub(m\eta_{\omega}-\eta))$. By \Th~\ref{th-reduction-to-h0}, $F(i,1,\eta) \neq \emptyset$ for all $\eta$. Hence \Rmk~\ref{rmk-reduction-to-h0-mod-p}  and \Th~\ref{th-serre-letter} imply  that every system system of Hecke eigenvalues that appears in~\eqref{eq-serre-letter-finiteness} also appears in $M_e$ (recall that $\veta$, $\vcaneta$ and $\vsubeta$ all have the same restriction to $S_e$).
 The finiteness then follows by the same argument as that given in
\cite{Serre-Two-Letters-Modular-Forms,Ghitza-Siegel-Mod-p-Algebraic,Reduzzi-PEL-Mod-p}.
\end{proof}

\begin{rmk}
The finiteness of Hecke eigen-systems appearing in \eqref{eq-serre-letter-finiteness} can also be deduced as follows: Recall that the (non-compactified) flag space $Fl_\Kcal$ introduced in \eqref{eq-flag-space-fiber-product} admits a smooth map $Fl_\Kcal \to \GF^\mu$. By pull-back, the stratification defined in \S\ref{sec-stratification-GZipFlag} induces a stratification of $Fl_\Kcal$. There is a unique zero-dimensional stratum $Fl_e$, and the projection $\pi:Fl_\Kcal \to \Shko$ induces a $\gofafp$-equivariant map $Fl_e\to S_e$. Hence, we obtain a Hecke-equivariant injection $H^0(S_e,\Vscr(\eta))\to H^0(Fl_e,\pi^*\Vscr(\eta))$. Furthermore, each $\pi^*\Vscr(\eta)$ admits a filtration with graded pieces of the form $\Lscr(\lambda)$. For each character $\chi\in X^*(T)$, there exists an integer $N\geq 1$ such that $\Lscr(\chi)^N$ admits a non-vanishing $\gofafp$-equivariant section (obtained by pull-back from the corresponding stratum in $\GF^\mu$). In particular, the lattice of weights of $\gofafp$-equivariant non-vanishing sections on $Fl_e$ has finite covolume in $X^*(T)$. The finiteness of Hecke-eigensystems follows immediately.

\end{rmk}

\section*{Acknowledgements} We thank our respective collaborators Marc-Hubert Nicole and Torsten Wedhorn: The joint projects \cite{Goldring-Nicole-mu-Hasse} and \cite{Koskivirta-Wedhorn-Hasse} were invaluable in leading us to the results of the current paper. We are grateful to  David Geraghty for the many important ideas that he contributed during the initial part of this project. In addition, we thank Joseph Ayoub, Laurent Clozel, Pierre Deligne, Phillip Griffiths, David Helm, Matt Kerr, Arno Kret, Ben Moonen, Marc-Hubert Nicole, Stefan Patrikis, Jonathan Pottharst, Sug Woo Shin, Benoit Stroh, Jacques Tilouine, Adrian Vasiu and Torsten Wedhorn for helpful conversations and correspondence. Finally, we thank the referee for his/her very thorough work, detailed comments and valuable suggestions, which we believe have led to considerable improvements in the paper.

\bibliographystyle{plain}
\bibliography{biblio_overleaf}

\end{document}